\providecommand{\U}[1]{\protect\rule{.1in}{.1in}}
\DeclareMathOperator{\dex}{dex}
\newtheorem{theorem}{Theorem}[section]
\newtheorem{lemma}[theorem]{Lemma}
\newtheorem{corollary}[theorem]{Corollary}
\theoremstyle{definition}
\newtheorem{definition}[theorem]{Definition}
\theoremstyle{remark}
\newtheorem{remark}[theorem]{Remark}
\numberwithin{equation}{section}
\begin{document}
\title[Bivariate Shepard-Bernoulli operators]{Bivariate Shepard-Bernoulli operators}
\author[F. Dell'Accio]{F. Dell'Accio}
\address{Dipartimento di Matematica e Informatica, Universit\`{a} della Calabria, 87036 Rende (Cs), Italy}
\email{fdellacc@unical.it}
\author[F. Di Tommaso]{F. Di Tommaso}
\address{Dipartimento di Matematica e Informatica, Universit\`{a} della Calabria, 87036 Rende (Cs), Italy}
\email{ditommaso@mat.unical.it}
\subjclass[2000]{Primary 41A05, 41A25; Secondary 65D05, 65D15}
\date{}
\dedicatory{To Professor F.A. Costabile}
\keywords{Multivariate polynomial interpolation, degree of exactness, scattered data
interpolation, combined Shepard operator, modified Shepard operator}

\begin{abstract}
In this paper we extend the Shepard-Bernoulli operators introduced in \cite{CaiDel} to the bivariate case. These new
interpolation operators are realized by using local support basis functions
introduced in \cite{FraNie} instead of classical Shepard basis functions and
the bivariate three point extension \cite{CosDel2}\ of the generalized Taylor
polynomial introduced \ by F. Costabile in \cite{Cos}. The new operators do
not require either the use of special partitions of the node convex hull or
special structured data as in \cite{Cat}. We deeply study their approximation
properties and provide an application to the scattered data interpolation
problem; the numerical results show that this new approach is comparable with
the other well known bivariate schemes QSHEP2D and CSHEP2D by Renka
\cite{Ren2,Ren4}.

\end{abstract}

\maketitle

\section{The problem}

Let $\mathcal{N}=\left\{  \boldsymbol{x}_{1},...,\boldsymbol{x}_{N}\right\}  $
be a set of $N$ distinct points (called \textit{nodes} or \textit{sample
points}) of $\mathbb{R}^{s},s\in\mathbb{N}$, and let $f$ be a function defined
on a domain $D$ containing $\mathcal{N}$. The classical Shepard operators
(first introduced in \cite{She} in the particular case $s=2$) are defined by%
\begin{equation}
S_{N,\mu}\left[  f\right]  \left(  \boldsymbol{x}\right)  :={\displaystyle\sum
\limits_{i=1}^{N}}A_{\mu,i}\left(  \boldsymbol{x}\right)  f\left(
\boldsymbol{x}_{i}\right)  ,\quad\mu>0, \label{shepoper}%
\end{equation}
where the weight functions $A_{\mu,i}\left(  \boldsymbol{x}\right)  $ in
barycentric form are%
\begin{equation}
A_{\mu,i}\left(  \boldsymbol{x}\right)  :=\frac{\left\vert \boldsymbol{x}%
-\boldsymbol{x}_{i}\right\vert ^{-\mu}}{{\displaystyle\sum\limits_{k=1}^{N}%
}\left\vert \boldsymbol{x}-\boldsymbol{x}_{k}\right\vert ^{-\mu}}
\label{sheweighfun}%
\end{equation}
and $\left\vert \cdot\right\vert $ denotes the Euclidean norm in
$\mathbb{R}^{s}$. The interpolation operator $S_{N,\mu}\left[  \cdot\right]  $
is stable, in the sense that%
\[
\min_{i}f\left(  \boldsymbol{x}_{i}\right)  \leq S_{N,\mu}\left[  f\right]
\left(  \boldsymbol{x}\right)  \leq\max_{i}f\left(  \boldsymbol{x}_{i}\right)
\]
but for $\mu>1$ the interpolating function $S_{N,\mu}\left[  f\right]  \left(
\boldsymbol{x}\right)  $ has flat spots in the neighborhood of all data
points. Moreover, the degree of exactness of the operator $S_{N,\mu}\left[
\cdot\right]  $\ is $0$, i.e. if it is restricted to the polynomial space
$\mathcal{P}_{\boldsymbol{x}}^{m}:=\left\{  p:\deg\left(  p\right)  \leq
m\right\}  $, then $\left.  S_{N,\mu}\left[  \cdot\right]  \right\vert
_{\mathcal{P}_{\boldsymbol{x}}^{m}}=Id_{\mathcal{P}_{\boldsymbol{x}}^{m}}$
(the identity function on $\mathcal{P}_{\boldsymbol{x}}^{m}$) only for $m=0$.

These drawbacks, in particular, can be avoided by replacing each value
$f\left(  \boldsymbol{x}_{i}\right)  $ in (\ref{shepoper}) with an
interpolation operator in $\boldsymbol{x}_{i}$, applied to $f$, with a degree
of exactness $m>0$ \cite{CaiDel}. In addition, to make the Shepard method a
local one, according to \cite{FraNie} we multiply Euclidean distances
$\left\vert \boldsymbol{x}-\boldsymbol{x}_{j}\right\vert ,j=1,\ldots,N$ in
(\ref{sheweighfun}) by Franke-Little weights \cite{HosLas}
\[
\left(  1-\frac{\left\vert \boldsymbol{x}-\boldsymbol{x}_{j}\right\vert
}{R_{w_{j}}}\right)  _{+}^{\mu},\text{ }R_{w_{j}}>0\text{ for each }%
j=1,\ldots,N
\]
where $(\cdot)_{+}$ is the positive part of the argument. As a result,
functions $A_{\mu,i}\left(  \boldsymbol{x}\right)  $ are replaced by compact
support functions $\widetilde{W}_{\mu,i}\left(  \boldsymbol{x}\right)  ,$
$i=1,\ldots,N$ defined as follows:
\begin{equation}
\widetilde{W}_{\mu,i}\left(  \boldsymbol{x}\right)  :=\dfrac{W_{\mu,i}\left(
\boldsymbol{x}\right)  }{\sum\limits_{k=1}^{N}W_{\mu,k}\left(  \boldsymbol{x}%
\right)  } \label{Wtbasisfunctions}%
\end{equation}
where%
\[
W_{\mu,i}\left(  \boldsymbol{x}\right)  :=\left(  \frac{1}{\left\vert
\boldsymbol{x}-\boldsymbol{x}_{i}\right\vert }-\frac{1}{R_{w_{i}}}\right)
_{+}^{\mu},
\]
and $R_{w_{i}}$ is the radius of influence about node $\boldsymbol{x}_{i}$: in
practice $R_{w_{i}}$ is taken to be just large enough to include $N_{w}$ nodes
in the open ball $B\left(  \boldsymbol{x}_{i},R_{w_{i}}\right)  =\left\{
\boldsymbol{x}\in%
\mathbb{R}
^{s}:\left\vert \boldsymbol{x}-\boldsymbol{x}_{i}\right\vert <R_{w_{i}%
}\right\}  $ \cite{Ren2}.

More precisely, if $P\left[  \cdot,\boldsymbol{x}_{i}\right]  $ denotes an
interpolation operator in $\boldsymbol{x}_{i}$ based on data in the ball
$B\left(  \boldsymbol{x}_{i},R_{w_{i}}\right)  $, $i=1,\dots,N$, the
\textit{local combined Shepard operators} are then defined as follows:
\begin{equation}
\widetilde{S}_{N,\mu}P[f](\boldsymbol{x}):={\displaystyle\sum\limits_{i=1}%
^{N}}\widetilde{W}_{\mu,i}\left(  \boldsymbol{x}\right)  P[f,\boldsymbol{x}%
_{i}]\left(  \boldsymbol{x}\right)  . \label{improvshep}%
\end{equation}
We emphasize that weight functions $\widetilde{W}_{\mu,i}\left(
\boldsymbol{x}\right)  $ have the same basic properties as weight functions
$A_{\mu,i}\left(  \boldsymbol{x}\right)  $, but the value of the improved
Shepard operator (\ref{improvshep}) at a point $\boldsymbol{x}\in D$ is
influenced only by values $\widetilde{W}_{\mu,i}\left(  \boldsymbol{x}\right)
P[f,\boldsymbol{x}_{i}]\left(  \boldsymbol{x}\right)  $ with $\boldsymbol{x}%
_{i}\in\mathcal{N}_{\boldsymbol{x}}:=\left\{  \boldsymbol{x}_{k}\in
\mathcal{N}:\left\vert \boldsymbol{x}-\boldsymbol{x}_{k}\right\vert <R_{w_{k}%
}\right\}  $.

Starting from Shepard himself various combinations have been proposed in order
to improve the approximation properties of the Shepard operators (see, for
example, \cite{ComTri1,Far}): Shepard-Bernoulli operators introduced recently
in \cite{CaiDel} represent a further attempt. These univariate operators are
obtained by replacing each value $f\left(  x_{i}\right)  $ in (\ref{shepoper})
with the generalized Taylor polynomial $P_{m}[f,x_{i},x_{i+1}](x)$, proposed
by F. Costabile in \cite{Cos}. In \cite{CaiDel} the rate of convergence of the
Shepard-Bernoulli operators is studied in depth and numerical examples
demonstrate the accuracy of the proposed combination in special situations, in
particular, when it is applied to the problem of interpolating the discrete
solutions of initial value problems for ordinary differential equations. In
the conclusion of \cite{CaiDel} the possibility of extending the
Shepard-Bernoulli operator to higher dimension was hypothesized by using the
expansions studied in \cite{CosDel1,CosDel2}.

In 2007 T. Catinas \cite{Cat} combined classic Shepard operators with the
tensorial extension of the generalized Taylor polynomial discussed in
\cite{CosDel1}. The resulting combination has separated degree of exactness
$m$ with respect to $x$ and $n$ with respect to $y$ when applied to
sufficiently smooth functions in the convex hull of data; it uses $N+2\left(
N-1\right)  $ specially structured three-dimensional data (in the situation
depicted in \cite{Cat} each point of $\mathcal{N}$ is the vertex of a
rectangle with vertices in $\mathcal{N}$) but, on account of the nature of the
polynomial, it interpolates only $N$ of them. The numerical results provided
in the paper, on some of the test functions provided in \cite{RenCli,RenBro},
show that the accuracy of the operator can be improved by using compact
support basis functions $\widetilde{W}_{\mu,i}$ instead of the global basis
functions $A_{\mu,i}$.

In this paper we extend the Shepard-Bernoulli operators to the bivariate case
using local basis functions $\widetilde{W}_{\mu,i}$ and the three point
interpolation polynomials discussed in \cite{CosDel2} and introduce a new
combination which interpolates on all data used for its definition. We do this
by associating to each sample point $\boldsymbol{x}_{i}$ a triangle with a
vertex in it and other two vertices in $B\left(  \boldsymbol{x}_{i},R_{w_{i}%
}\right)  $; the association is done in order to reduce the error of the three
point interpolation polynomial based on the three vertices of the triangle.
For fixed values of $N_{w}$ \cite{Ren2} this choice allows us to reduce the
error of the proposed combination. As a consequence, the resulting operator
not only interpolates at each sample point and increases by $1$ the degree of
exactness of the Shepard-Taylor operator \cite{Far} which uses the same data,
but also improves its accuracy.

The paper is organized as follows. We start section \ref{Sec1} by briefly
recalling the definition of the generalized Taylor polynomial. Then we deal
with the extension $P_{m}^{\left[  \Delta_{2}\left(  V_{1},V_{2},V_{3}\right)
;V_{1}\right]  }[f]$ of the generalized Taylor polynomial to a generic simplex
$\Delta_{2}\left(  V_{1},V_{2},V_{3}\right)  $ of vertices $V_{1},V_{2}%
,V_{3}\in D$. In particular we provide new results concerning: error of
approximation, limit behaviour and interpolation conditions of the given
extension. In section \ref{Sec:4} we use these results to define the bivariate
Shepard-Bernoulli operators and to study their remainder terms and rate of
convergence. In section \ref{Sec:5} we apply the bivariate Shepard-Bernoulli
operators to the scattered data interpolation problem. The numerical results
on some commonly used test functions for scattered data approximation
\cite{RenCli,RenBro} show that the bivariate interpolation scheme proposed
here is comparable well with the better known operators QSHEP2D \cite{Ren2}
and CSHEP2D \cite{Ren4}. Finally, in section \ref{Sec:6} we draw conclusions.

\section{Further remarks on the generalized Taylor polynomial.\label{Sec1}}

\subsection{The univariate expansion in Bernoulli polynomials.}

The generalized Taylor polynomial \cite{Cos}\ is an expansion in Bernoulli
polynomials, i.e. in polynomials defined recursively by means of the following
relations \cite{Cos,Jor}
\begin{equation}
\left\{
\begin{array}
[c]{ll}%
B_{0}\left(  x\right)  =1, & \\
B_{n}^{\prime}\left(  x\right)  =nB_{n-1}\left(  x\right)  , & n\geq1,\\
{\displaystyle\int\limits_{0}^{1}}B_{n}\left(  x\right)  dx=0, & n\geq1.
\end{array}
\right.  \label{berricrel}%
\end{equation}
For functions in the class $C^{m}\left(  \left[  a,b\right]  \right)
,\ a,b\in\mathbb{R},\ a<b$, this expansion is realized by the equation
\begin{equation}
f\left(  x\right)  =P_{m}[f,a,b](x)+R_{m}\left[  f,a,b\right]  (x),\quad
x\in\left[  a,b\right]  , \label{taygenappfor}%
\end{equation}
the polynomial approximant is defined by
\begin{equation}
P_{m}[f,a,b](x)=f\left(  a\right)  +{\displaystyle\sum\limits_{k=1}^{m}}%
\frac{S_{k}\left(  \dfrac{x-a}{h}\right)  }{k!}h^{k-1}\left(  f^{(k-1)}\left(
b\right)  -f^{\left(  k-1\right)  }\left(  a\right)  \right)
\ \label{poltaygen}%
\end{equation}
and the remainder term is
\begin{equation}
R_{m}\left[  f,a,b\right]  (x)=\frac{h^{m-1}}{m!}{\displaystyle\int
\limits_{a}^{b}}f^{\left(  m\right)  }\left(  t\right)  \left(  B_{m}\left(
\frac{b-t}{h}\right)  -B_{m}\left(  \frac{\left(  x-t\right)  -\left[
x-t\right]  }{h}\right)  \right)  dt, \label{remtaygen}%
\end{equation}
where we have set $S_{k}\left(  x\right)  =B_{k}\left(  x\right)  -B_{k}$,
$B_{k}=B_{k}\left(  0\right)  $ and we have denoted by $\left[  \cdot\right]
$ the integer part of the argument and $h=b-a$. The polynomial $P_{m}[f,a,b]$
can be extended in a natural way to the whole real line; in this case Peano's
kernel theorem \cite[p. 70]{Dav}\ provides an integral expression for the
remainder (\ref{remtaygen}) \cite{CaiDel}. The main properties of the
generalized Taylor polynomial have been extensively studied in \cite{CaiDel,Cos}.

\subsection{The bivariate extension.}

\label{Sec:2.2}In \cite{CosDel2} the univariate expansion (\ref{taygenappfor})
has been extended to a bivariate expansion for functions of class $C^{m}$ in
the standard simplex $\Delta_{2}=\left\{  \left(  x,y\right)  \in
\mathbb{R}^{2}:x\geq0,y\geq0,x+y\leq1\right\}  $ which interpolates at the
vertices of the simplex and it is exact in $\mathcal{P}_{\boldsymbol{x}}^{m}$.
As mentioned in \cite{CosDel2}, this expansion can be generalized to a generic
simplex of $\mathbb{R}^{2}$ by means of a linear isomorphism. In this paper we
require this general expansion and, in order to formalize it, let us denote by
$%
\mathbb{Z}
_{+}^{2}$ the set of all pairs with non-negative integer components in the
euclidean space $%
\mathbb{R}
^{2}$. For $\beta=\left(  \beta_{1},\beta_{2}\right)  \in%
\mathbb{Z}
_{+}^{2}$, we use the notations $\left\vert \beta\right\vert =\beta_{1}%
+\beta_{2}$, $\beta!=\beta_{1}!\beta_{2}!$ and $\beta\leq\alpha$ if and only
if $\beta_{i}\leq\alpha_{i}$ for all $i=1,2$. Moreover, we assume that $D$ is
a compact convex domain and $V_{1},V_{2},V_{3}\in D$. To fix the ideas we set
$V_{1}=(x_{1,}y_{1}),~V_{2}=\left(  x_{2},y_{2}\right)  ,~V_{3}=\left(
x_{3},y_{3}\right)  $. We denote by $\Delta_{2}\left(  V_{1},V_{2}%
,V_{3}\right)  $ the simplex of vertices $V_{1},V_{2},V_{3}$, i.e. the convex
hull of the set $\left\{  V_{1},V_{2},V_{3}\right\}  $. The baricentric
coordinates $\left(  \lambda_{1}\left(  \boldsymbol{x}\right)  ,\lambda
_{2}\left(  \boldsymbol{x}\right)  ,\lambda_{3}\left(  \boldsymbol{x}\right)
\right)  $, of a generic point $\boldsymbol{x}=\left(  x,y\right)  $%
$\in\mathbb{R}^{2}$, relative to the simplex $\Delta_{2}\left(  V_{1}%
,V_{2},V_{3}\right)  $, are defined by
\begin{equation}%
\begin{array}
[c]{ccc}%
\lambda_{1}\left(  \boldsymbol{x}\right)  =\dfrac{A\left(  \boldsymbol{x}%
,V_{2},V_{3}\right)  }{A\left(  V_{1},V_{2},V_{3}\right)  }, & \lambda
_{2}\left(  \boldsymbol{x}\right)  =\dfrac{A\left(  V_{1},\boldsymbol{x}%
,V_{3}\right)  }{A\left(  V_{1},V_{2},V_{3}\right)  }, & \lambda_{3}\left(
\boldsymbol{x}\right)  =\dfrac{A\left(  V_{1},V_{2},\boldsymbol{x}\right)
}{A\left(  V_{1},V_{2},V_{3}\right)  }%
\end{array}
\label{coorbar}%
\end{equation}
where $A\left(  V_{1},V_{2},V_{3}\right)  $ is the signed area of the simplex
of vertices $V_{1},V_{2},V_{3}$%
\[
A\left(  V_{1},V_{2},V_{3}\right)  =\left\vert
\begin{array}
[c]{ccc}%
1 & 1 & 1\\
x_{1} & x_{2} & x_{3}\\
y_{1} & y_{2} & y_{3}%
\end{array}
\right\vert .
\]
If $f$ is a differentiable function, and $V_{i}$ and $V_{j}$ are two distinct
vertices of the simplex $\Delta_{2}\left(  V_{1},V_{2},V_{3}\right)  $, the
derivative of $f$ along the directed line segment from $V_{i}$ to $V_{j}$
(side of the simplex) at $\boldsymbol{x}$ is denoted by%
\begin{equation}%
\begin{array}
[c]{cc}%
D_{ij}f\left(  \boldsymbol{x}\right)  :=\left(  V_{i}-V_{j}\right)
\cdot\nabla f\left(  \boldsymbol{x}\right)  , & i,j=1,2,3,i\neq j,
\end{array}
\label{D_ij(f)}%
\end{equation}
where $\cdot$ is the dot product and $\nabla f\left(  \boldsymbol{x}\right)
=\left(  \dfrac{\partial f}{\partial x}\left(  \boldsymbol{x}\right)
,\dfrac{\partial f}{\partial y}\left(  \boldsymbol{x}\right)  \right)  $. The
composition of derivatives along the directed sides of the simplex
(\ref{D_ij(f)}) are denoted by
\begin{equation}%
\begin{array}
[c]{ccc}%
D_{1}^{\beta}=D_{21}^{\beta_{1}}D_{31}^{\beta_{2}}, & D_{2}^{\beta}%
=D_{12}^{\beta_{1}}D_{32}^{\beta_{2}}, & D_{3}^{\beta}=D_{13}^{\beta_{1}%
}D_{23}^{\beta_{2}}.
\end{array}
\label{D0D1D2}%
\end{equation}
The following theorem holds:

\begin{theorem}
\label{gensimexp}Let $f$ be a function of class $C^{m}\left(  D\right)  $.
Then for each $\boldsymbol{x}\in\Delta_{2}\left(  V_{1},V_{2},V_{3}\right)  $
we have%
\begin{equation}
f\left(  \boldsymbol{x}\right)  =P_{m}^{\left[  \Delta_{2}\left(  V_{1}%
,V_{2},V_{3}\right)  ;V_{1}\right]  }[f](\boldsymbol{x})+R_{m}^{\left[
\Delta_{2}\left(  V_{1},V_{2},V_{3}\right)  ;V_{1}\right]  }\left[  f\right]
\left(  \boldsymbol{x}\right)  \label{gspoltayexp}%
\end{equation}
where%
\begin{equation}%
\begin{array}
[t]{l}%
P_{m}^{\left[  \Delta_{2}\left(  V_{1},V_{2},V_{3}\right)  ;V_{1}\right]
}[f](\boldsymbol{x})=f\left(  V_{1}\right) \\
\quad\quad+%
{\displaystyle\sum\limits_{j=1}^{m}}
\left(  D_{1}^{\left(  0,j-1\right)  }f\left(  V_{3}\right)  -D_{1}^{\left(
0,j-1\right)  }f\left(  V_{1}\right)  \right)  \dfrac{S_{j}\left(  \lambda
_{2}+\lambda_{3}\right)  }{j!}\\
\quad\quad+%
{\displaystyle\sum\limits_{i=1}^{m}}
{\displaystyle\sum\limits_{j=1}^{m-i+1}}
\left(  \left(  -1\right)  ^{i+j}\left(  D_{2}^{\left(  j-1,i-1\right)
}f\left(  V_{2}\right)  -D_{2}^{\left(  j-1,i-1\right)  }f\left(
V_{1}\right)  \right)  \right. \\
\quad\quad\left.  +\left(  -1\right)  ^{j}\left(  D_{3}^{\left(
j-1,i-1\right)  }f\left(  V_{3}\right)  -D_{3}^{\left(  j-1,i-1\right)
}f\left(  V_{1}\right)  \right)  \right) \\
\quad\quad\times\dfrac{\left(  \lambda_{2}+\lambda_{3}\right)  ^{i-1}%
S_{i}\left(  \frac{\lambda_{2}}{\lambda_{2}+\lambda_{3}}\right)  }{i!}%
\dfrac{S_{j}\left(  \lambda_{2}+\lambda_{3}\right)  }{j!},
\end{array}
\label{gspoltaygen}%
\end{equation}
and $R_{m}^{\left[  \Delta_{2}\left(  V_{1},V_{2},V_{3}\right)  ;V_{1}\right]
}\left[  f\right]  \left(  \boldsymbol{x}\right)  $ is the remainder term.
\end{theorem}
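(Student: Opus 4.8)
The plan is to obtain \eqref{gspoltayexp}--\eqref{gspoltaygen} from the standard-simplex expansion established in \cite{CosDel2} by transporting it along the unique affine isomorphism $\phi$ of $\mathbb{R}^{2}$ with $\phi(0,0)=V_{1}$, $\phi(1,0)=V_{2}$, $\phi(0,1)=V_{3}$, namely $\phi(u,v)=V_{1}+u(V_{2}-V_{1})+v(V_{3}-V_{1})$, which maps the standard simplex $\Delta_{2}$ bijectively onto $\Delta_{2}(V_{1},V_{2},V_{3})$. Two elementary facts drive the reduction. First, barycentric coordinates are affine invariants, so since the barycentric coordinates of $(u,v)$ with respect to $\Delta_{2}$ are $(1-u-v,\,u,\,v)$, one has $\lambda_{1}(\phi(u,v))=1-u-v$, $\lambda_{2}(\phi(u,v))=u$ and $\lambda_{3}(\phi(u,v))=v$; in particular $\lambda_{2}+\lambda_{3}$ and $\lambda_{2}/(\lambda_{2}+\lambda_{3})$ pull back under $\phi$ to $u+v$ and $u/(u+v)$. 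Second, since $\phi$ is affine its differential is the constant linear map sending $e_{i}-e_{j}$ to $V_{i}-V_{j}$, where $e_{1}=(0,0),\,e_{2}=(1,0),\,e_{3}=(0,1)$; hence, by the chain rule, the derivative of $f\circ\phi$ along the side $e_{i}\to e_{j}$ of $\Delta_{2}$ equals $(D_{ij}f)\circ\phi$, and by iteration the compositions \eqref{D0D1D2} of such derivatives of $f\circ\phi$ equal $(D_{k}^{\beta}f)\circ\phi$ for every $\beta\in\mathbb{Z}_{+}^{2}$ and $k=1,2,3$.

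Next I would apply the expansion of \cite{CosDel2} to $g:=f\circ\phi\in C^{m}(\Delta_{2})$, based at the vertex $(0,0)$, and substitute $(u,v)=\phi^{-1}(\boldsymbol{x})$ for $\boldsymbol{x}\in\Delta_{2}(V_{1},V_{2},V_{3})$. By the first fact every argument $u+v$ or $u/(u+v)$ of the Bernoulli-type factors $S_{i},S_{j}$ in \cite{CosDel2} becomes, respectively, $\lambda_{2}+\lambda_{3}$ or $\lambda_{2}/(\lambda_{2}+\lambda_{3})$; by the second fact the coordinate derivatives of $g$ at $(0,0)$, $(1,0)$, $(0,1)$ become $D_{1}^{\beta}f(V_{1})$, $D_{2}^{\beta}f(V_{2})$, $D_{3}^{\beta}f(V_{3})$ together with the values at $V_{1}$ that occur in the differences of \eqref{gspoltaygen}. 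Collecting the resulting terms reproduces \eqref{gspoltaygen}, and setting $R_{m}^{[\Delta_{2}(V_{1},V_{2},V_{3});V_{1}]}[f]:=f-P_{m}^{[\Delta_{2}(V_{1},V_{2},V_{3});V_{1}]}[f]$ yields \eqref{gspoltayexp}; the integral form of the remainder follows from that of the standard-simplex remainder in \cite{CosDel2} by the change of variables $\boldsymbol{t}=\phi(\boldsymbol{\tau})$, whose constant Jacobian is $|A(V_{1},V_{2},V_{3})|$.

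It then remains to verify that the right-hand side of \eqref{gspoltaygen} is genuinely a polynomial of degree $\le m$ in $\boldsymbol{x}$ and that $P_{m}^{[\Delta_{2}(V_{1},V_{2},V_{3});V_{1}]}$ reproduces $\mathcal{P}_{\boldsymbol{x}}^{m}$ and interpolates at $V_{1},V_{2},V_{3}$. Polynomiality is clear for the single sum, where each $S_{j}(\lambda_{2}+\lambda_{3})$ is a polynomial of degree $j\le m$ in $\lambda_{2}+\lambda_{3}$, hence in $\boldsymbol{x}$; in the double sum the only apparent singularity is the factor $1/(\lambda_{2}+\lambda_{3})$ occurring inside $S_{i}(\lambda_{2}/(\lambda_{2}+\lambda_{3}))$ when $i=1$, and it is cancelled by the accompanying factor $S_{j}(\lambda_{2}+\lambda_{3})$, which vanishes at $\lambda_{2}+\lambda_{3}=0$ since $S_{j}(0)=0$. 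Exactness on $\mathcal{P}_{\boldsymbol{x}}^{m}$ and vertex interpolation are inherited from the corresponding properties on $\Delta_{2}$, because $\phi$ preserves polynomial degree and carries vertices to vertices. The main obstacle is purely bookkeeping: one must reconcile the notation of \cite{CosDel2} with the directional-derivative notation \eqref{D0D1D2} used here, which forces one to track simultaneously the orientation-induced signs coming from $D_{ij}=-D_{ji}$ in the iterated derivatives based at $V_{2}$ and $V_{3}$ and the reflection identities $B_{k}(1-t)=(-1)^{k}B_{k}(t)$ for the Bernoulli factors, and to check that their product collapses exactly to the coefficients $(-1)^{i+j}$ and $(-1)^{j}$ displayed in \eqref{gspoltaygen}; everything else is a routine affine change of variables.
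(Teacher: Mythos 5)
Your proposal is correct in outline, but it takes precisely the route that the paper's proof deliberately sets aside: the authors open their proof sketch by saying that \emph{rather than} obtaining \eqref{gspoltaygen} via the linear transformation carrying $(0,0),(1,0),(0,1)$ to $V_{1},V_{2},V_{3}$, they re-run the construction of \cite{CosDel2} directly on the general simplex. Concretely, they assign an interior point $V$ to the segment $[T_{2},T_{3}]$ cut out by the line through $V$ parallel to the side $V_{2}V_{3}$, expand the restriction of $f$ along that segment by the univariate formula \eqref{taygenappfor} evaluated at the parameter $\lambda_{2}/(\lambda_{2}+\lambda_{3})$, and then reach the vertices by repeated univariate expansions of $f$ and its directional derivatives along the segments $V_{1}V_{3}$ and $V_{1}V_{2}$. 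Your affine-pullback argument is sound and its two driving facts (affine invariance of barycentric coordinates, and the intertwining $D_{k}^{\beta}(f\circ\phi)=(D_{k}^{\beta}f)\circ\phi$ for the side derivatives \eqref{D0D1D2}) are exactly right; it buys a shorter reduction to the standard-simplex theorem used as a black box, at the cost of the sign/reflection bookkeeping you correctly flag, and of a mild imprecision: the apparent factor $(\lambda_{2}+\lambda_{3})^{-1}$ arises from the $k=0$ term of $S_{i}$ for \emph{every} $i$, not only for $i=1$, though the cancellation against $S_{j}(\lambda_{2}+\lambda_{3})$ (which vanishes at $0$) works uniformly. The paper's direct construction buys a self-contained derivation whose intermediate objects ($T_{2}$, $T_{3}$, the parallel segment) are reused later in the error analysis, and it avoids having to re-index the standard-simplex formula through $\phi$. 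Both arguments are presented only at sketch level, so neither is more complete than the other.
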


\begin{proof}
[Proof sketch]Rather than obtaining expansion (\ref{gspoltaygen}) by using a
linear transformation which maps points $\left(  0,0\right)  ,\left(
1,0\right)  ,\left(  0,1\right)  $ in points $\left(  x_{1},y_{1}\right)
,\left(  x_{2},y_{2}\right)  ,\left(  x_{3},y_{3}\right)  $ respectively, we
proceed by adapting, in this general case, the extension technique to the
simplex specially developed in \cite{CosDel2,CosDel3} for expansions in
Bernoulli and Lidstone polynomials (see also \cite{CosDelGuz} for extensions
of asymmetric expansions).\ We denote by $r_{1}~$the line through $V_{2}%
,V_{3}$, by $r_{2}$ the line through $V_{1},V_{3}$ and by $r_{3}$ the line
through $V_{1},V_{2}$. Let $V=\left(  x,y\right)  \in\Delta_{2}\left(
V_{1},V_{2},V_{3}\right)  $ be an interior point. We denote by $r_{1}^{V}$ the
line through $V$ parallel to the line $r_{1}$ and by $T_{2}\left(  x,y\right)
=\left(  x-\lambda_{2}h_{1},y-\lambda_{2}k_{1}\right)  ,T_{3}\left(
x,y\right)  =\left(  x+\lambda_{3}h_{1},y+\lambda_{3}k_{1}\right)  $ the
intersection points between $r_{1}^{V}$ and $r_{2},r_{3}$, respectively, where
$\left(  h_{1},k_{1}\right)  =V_{2}-V_{3}$. We assign point $V$ to the line
segment $S\left(  x,y\right)  =\left[  T_{2},T_{3}\right]  $ parameterized by%
\[
\left\{
\begin{array}
[c]{cc}%
\begin{array}
[c]{c}%
x\left(  \lambda\right)  =x-\lambda_{2}h_{1}+\lambda\left(  \lambda
_{2}+\lambda_{3}\right)  h_{1}\\
y\left(  \lambda\right)  =y-\lambda_{2}k_{1}+\lambda\left(  \lambda
_{2}+\lambda_{3}\right)  k_{1}%
\end{array}
, & \lambda\in\left[  0,1\right]  .
\end{array}
\right.
\]
The restriction of $f$ to $S$ is the univariate function $f\left(  x\left(
\lambda\right)  ,y\left(  \lambda\right)  \right)  $ in $\left[  0,1\right]  $
and we expand it by generalized Taylor expansion (\ref{taygenappfor}). In this
expansion we replace parameter $\lambda$ by the value $\frac{\lambda_{2}%
}{\lambda_{2}+\lambda_{3}}$ which corresponds to point $V$. This results in an
expansion in terms of the values at $T_{2},T_{3}$ of $f$ and its derivatives
in direction of the side $V_{2},V_{3}$. We reach the vertices of the simplex
$\Delta_{2}\left(  V_{1},V_{2},V_{3}\right)  $ by assigning points
$T_{2},T_{3}$ to the segments with end points $V_{1},V_{3}$ and $V_{1},V_{2}$
respectively and by repeated use of the expansion (\ref{taygenappfor}) on $f$
and its directional derivatives (see fig. \ref{simplesso}).%
\begin{figure}
[ptb]
\begin{center}
\includegraphics[
height=2.5028in,
width=3.5in
]%
{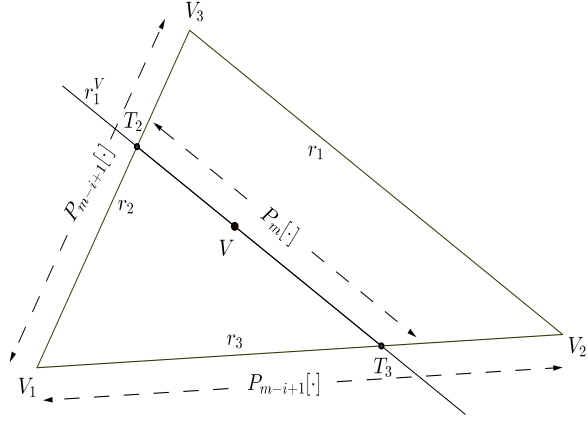}%
\caption{Let $V=\left(  x,y\right)  \in\Delta_{2}\left(  V_{1},V_{2}%
,V_{3}\right)  $. By assigning this point to the segment with end-points
$T_{2},T_{3}$ we expand $f$ along this segment by generalized Taylor
expansion; the vertices of the simplex are then reached by a repeated use of
expansion (\ref{taygenappfor}) with respect to the points $V_{1},V_{3}$ and
$V_{1},V_{2}$ on $f$ and its directional derivatives. }%
\label{simplesso}%
\end{center}
\end{figure}
\qquad
\end{proof}

If $\boldsymbol{x}\in\Delta_{2}\left(  V_{1},V_{2},V_{3}\right)  $ an
expression for the remainder $R_{m}^{\left[  \Delta_{2}\left(  V_{1}%
,V_{2},V_{3}\right)  ;V_{1}\right]  }\left[  f\right]  \left(  \boldsymbol{x}%
\right)  $ can be obtained by a repeated use of (\ref{remtaygen}) as in
\cite{CosDel2}. If $\boldsymbol{x}\in D$ the following Theorem\ provides an
expression for $R_{m}^{\left[  \Delta_{2}\left(  V_{1},V_{2},V_{3}\right)
;V_{1}\right]  }\left[  f\right]  \left(  \boldsymbol{x}\right)  $ under only
slightly stronger hypothesis on $f$. More precisely, we consider the class
$C^{m,1}\left(  D\right)  $ of functions $f\in C^{m}\left(  D\right)  $
\cite{Whit} with partial derivatives $\frac{\partial^{m}f}{\partial
x^{m-j}\partial y^{j}}$ Lipschitz-continuous in $D$ for each $j=0,\dots,m$ and
we set \cite{Far}%
\begin{equation}
\left\vert f\right\vert _{m,1}=\sup_{j=0,\dots,m}\left\{  \frac{\left\vert
\frac{\partial^{m}f}{\partial x^{m-j}\partial y^{j}}\left(  \boldsymbol{u}%
_{1}\right)  -\frac{\partial^{m}f}{\partial x^{m-j}\partial y^{j}}\left(
\boldsymbol{u}_{2}\right)  \right\vert }{\left\vert \boldsymbol{u}%
_{1}-\boldsymbol{u}_{2}\right\vert },\boldsymbol{u}_{1}\neq\boldsymbol{u}%
_{2}\text{ in }D\right\}  . \label{modfm,1}%
\end{equation}

\begin{theorem}
\label{gentaypol}Let $f$ be a function of class $C^{m,1}\left(  D\right)  $.
Then for each $\boldsymbol{x}\in D$ we have%
\begin{equation}
P_{m}^{\left[  \Delta_{2}\left(  V_{1},V_{2},V_{3}\right)  ;V_{1}\right]
}[f](\boldsymbol{x})=T_{m}\left[  f,V_{1}\right]  \left(  \boldsymbol{x}%
\right)  +\delta_{m}^{\left[  \Delta_{2}\left(  V_{1},V_{2},V_{3}\right)
;V_{1}\right]  }[f](\boldsymbol{x}) \label{PTplusDgen}%
\end{equation}
where $T_{m}\left[  f,V_{1}\right]  \left(  \boldsymbol{x}\right)  $ is the
Taylor polynomial of order $m$ for $f$ at $V_{1}$ \cite[Ch. 1]{Atk}\ and%
\begin{equation}%
\begin{array}
[t]{l}%
\delta_{m}^{\left[  \Delta_{2}\left(  V_{1},V_{2},V_{3}\right)  ;V_{1}\right]
}[f](\boldsymbol{x})=\\
\quad%
{\displaystyle\sum\limits_{j=1}^{m}}
{\displaystyle\int_{0}^{1}}
\frac{D_{31}^{m-j+2}D_{1}^{\left(  0,j-1\right)  }f\left(  V_{1}+t\left(
V_{3}-V_{1}\right)  \right)  \left(  1-t\right)  ^{m-j+1}}{\left(
m-j+1\right)  !}dt\dfrac{S_{j}\left(  \lambda_{2}+\lambda_{3}\right)  }{j!}\\
\quad+%
{\displaystyle\sum\limits_{i=1}^{m}}
{\displaystyle\sum\limits_{j=1}^{m-i+1}}
\left(  \left(  -1\right)  ^{i+j}%
{\displaystyle\int_{0}^{1}}
\frac{D_{21}^{m-j-i+3}D_{2}^{\left(  j-1,i-1\right)  }f\left(  V_{1}+t\left(
V_{2}-V_{1}\right)  \right)  \left(  1-t\right)  ^{m-j-i+2}}{(m-j-i+2)!}%
dt\right. \\
\quad+\left.  \left(  -1\right)  ^{j}%
{\displaystyle\int_{0}^{1}}
\frac{D_{31}^{m-j-i+3}D_{3}^{\left(  j-1,i-1\right)  }f\left(  V_{1}+t\left(
V_{3}-V_{1}\right)  \right)  \left(  1-t\right)  ^{m-j-i+2}}{(m-j-i+2)!}%
dt\right) \\
\quad\times\dfrac{\left(  \lambda_{2}+\lambda_{3}\right)  ^{i-1}S_{i}\left(
\frac{\lambda_{2}}{\lambda_{2}+\lambda_{3}}\right)  }{i!}\dfrac{S_{j}\left(
\lambda_{2}+\lambda_{3}\right)  }{j!}.
\end{array}
\label{Dgen}%
\end{equation}

\end{theorem}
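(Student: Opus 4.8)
The idea is to pull the identity back to one real variable and then reorganize the explicit sum \eqref{gspoltaygen}. Two elementary facts are used. The first is Taylor's formula with integral remainder for a function $g$ of class $C^{m,1}$: for $r+N+1\le m+1$,
\[
g^{(r)}(b)=\sum_{l=0}^{N}\frac{g^{(r+l)}(a)}{l!}\,(b-a)^{l}+\int_{a}^{b}\frac{g^{(r+N+1)}(t)\,(b-t)^{N}}{N!}\,dt ,
\]
the remainder being a bona fide Lebesgue integral because the Lipschitz continuity of $g^{(m)}$ makes $g^{(m+1)}$ an element of $L^{\infty}$; this is exactly the regularity that $f\in C^{m,1}(D)$ provides for the directional derivatives of $f$ along the sides of the simplex. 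The second is the Bernoulli identity $\sum_{k=1}^{n}\binom{n+1}{k}S_{k}(u)=(n+1)\,u^{n}$, which follows from $B_{n}(u+1)=\sum_{k=0}^{n}\binom{n}{k}B_{k}(u)$, from $B_{n}(u+1)-B_{n}(u)=n\,u^{n-1}$, and from $S_{k}=B_{k}-B_{k}(0)$.

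Now I would feed the integral Taylor formula into \eqref{gspoltaygen}. By the definitions \eqref{D0D1D2}, the three kinds of increment occurring there are increments, between $V_{1}$ and the relevant second vertex, of the directional derivatives $D_{1}^{(0,j-1)}f=D_{31}^{j-1}f$ (along $[V_{1},V_{3}]$), $D_{2}^{(j-1,i-1)}f=D_{12}^{j-1}D_{32}^{i-1}f$ (along $[V_{1},V_{2}]$), $D_{3}^{(j-1,i-1)}f=D_{13}^{j-1}D_{23}^{i-1}f$ (along $[V_{1},V_{3}]$); along the segment joining $V_{1}$ to a vertex $V$ the derivative that occurs is $(V-V_{1})\cdot\nabla$. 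Expanding these increments by the integral Taylor formula — of orders $m-j+1$, $m-i-j+2$, $m-i-j+2$ respectively, which are the largest orders allowed, since the resulting remainders contain directional derivatives of $f$ of total order $m+1$ — and substituting back into \eqref{gspoltaygen} writes $P_{m}^{[\Delta_{2}(V_{1},V_{2},V_{3});V_{1}]}[f]$ as a polynomial part (the Taylor-polynomial contributions of the three increments, all evaluated at $V_{1}$ and of order $\le m$) plus an integral part. A direct check of the indices shows that the integral part is, up to relabelling the inner summation index, precisely \eqref{Dgen}: the $V_{3}$-increment contributes the integral of $D_{31}^{m-j+2}D_{1}^{(0,j-1)}f$, the $V_{2}$-increment that of $D_{21}^{m-j-i+3}D_{2}^{(j-1,i-1)}f$, the remaining $V_{3}$-increment that of $D_{31}^{m-j-i+3}D_{3}^{(j-1,i-1)}f$, carrying unchanged the weights $(1-t)^{\bullet}/\bullet!$ and the coefficients $(-1)^{i+j}$, $(-1)^{j}$, $S_{j}(\lambda_{2}+\lambda_{3})/j!$ and $(\lambda_{2}+\lambda_{3})^{i-1}S_{i}(\lambda_{2}/(\lambda_{2}+\lambda_{3}))/i!$ of \eqref{gspoltaygen}.

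What is left is to recognize the polynomial part as the Taylor polynomial $T_{m}[f,V_{1}]$. Since $\boldsymbol{x}-V_{1}=\lambda_{2}(V_{2}-V_{1})+\lambda_{3}(V_{3}-V_{1})$ by the area ratios \eqref{coorbar}, the multivariate Taylor formula gives $T_{m}[f,V_{1}](\boldsymbol{x})=\sum_{n=0}^{m}\frac{1}{n!}(\lambda_{2}D_{21}+\lambda_{3}D_{31})^{n}f(V_{1})$, and I would compare this with the polynomial part coefficient by coefficient in the monomials $D_{21}^{p}D_{31}^{q}f(V_{1})$, rewriting the stray operators through $D_{12}=-D_{21}$, $D_{13}=-D_{31}$, $D_{32}=D_{31}-D_{21}$, $D_{23}=D_{21}-D_{31}$; each such coefficient is a finite combination of $S_{j}(\lambda_{2}+\lambda_{3})$ and $(\lambda_{2}+\lambda_{3})^{i-1}S_{i}(\lambda_{2}/(\lambda_{2}+\lambda_{3}))$ which collapses to $\lambda_{2}^{p}\lambda_{3}^{q}/(p!\,q!)$ after two uses of the Bernoulli identity above (once with argument $\lambda_{2}+\lambda_{3}$, once with argument $\lambda_{2}/(\lambda_{2}+\lambda_{3})$, the product form of the two factors in \eqref{gspoltaygen} being what makes the second use legitimate). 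I expect this final identification to be the main obstacle, because of the signs produced by $(D_{21}-D_{31})^{i-1}$ and the boundary indices $j=m$, $i+j=m+1$ where the remainder orders drop to $1$. A shortcut that avoids the computation is to note that the polynomial part and the map $f\mapsto T_{m}[f,V_{1}]$ are both linear in, and determined by, the jet of $f$ of order $\le m$ at $V_{1}$, that the integral part of \eqref{gspoltaygen} vanishes identically when $f\in\mathcal{P}_{\boldsymbol{x}}^{m}$ (its integrands are derivatives of order $m+1$), and that $P_{m}^{[\Delta_{2}(V_{1},V_{2},V_{3});V_{1}]}[\cdot]$ reproduces $\mathcal{P}_{\boldsymbol{x}}^{m}$ — so the polynomial part agrees with $T_{m}[\cdot,V_{1}]$ on $\mathcal{P}_{\boldsymbol{x}}^{m}$, hence on all of $C^{m}(D)$.
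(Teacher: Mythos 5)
Your proposal is correct and follows essentially the same route as the paper: apply Taylor's theorem with integral remainder to each derivative difference in (\ref{gspoltaygen}) to split off the integral part (\ref{Dgen}), then identify the remaining polynomial part with $T_{m}\left[  f,V_{1}\right]  $ via exactly the ``shortcut'' you describe (both operators depend only on the $m$-jet of $f$ at $V_{1}$, both reproduce $\mathcal{P}_{\boldsymbol{x}}^{m}$ because $\delta_{m}$ vanishes there and $P_{m}$ has degree of exactness $m$). The coefficient-by-coefficient comparison you sketch first is not needed and is not what the paper does.
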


\begin{proof}
By applying the Taylor theorem with integral remainder \cite[Ch. 7]{Apo} to
all derivative differences in (\ref{gspoltaygen}) we find%
\[
P_{m}^{\left[  \Delta_{2}\left(  V_{1},V_{2},V_{3}\right)  ;V_{1}\right]
}[f](\boldsymbol{x})=T_{m}\left[  f,V_{1}\right]  \left(  \boldsymbol{x}%
\right)  +\delta_{m}^{\left[  \Delta_{2}\left(  V_{1},V_{2},V_{3}\right)
;V_{1}\right]  }[f](\boldsymbol{x})
\]
where $\delta_{m}^{\left[  \Delta_{2}\left(  V_{1},V_{2},V_{3}\right)
;V_{1}\right]  }[f](\boldsymbol{x})$ is as in (\ref{Dgen}) and
\[%
\begin{array}
[c]{l}%
\widetilde{T}_{m}[f,V_{1}](\boldsymbol{x}):=f\left(  V_{1}\right)  +%
{\displaystyle\sum\limits_{j=1}^{m}}
{\displaystyle\sum\limits_{k=1}^{m-j+1}}
\dfrac{1}{k!}D_{1}^{\left(  0,k+j-1\right)  }f\left(  V_{1}\right)
\dfrac{S_{j}\left(  \lambda_{2}+\lambda_{3}\right)  }{j!}\\
\quad+%
{\displaystyle\sum\limits_{i=1}^{m}}
{\displaystyle\sum\limits_{j=1}^{m-i+1}}
{\displaystyle\sum\limits_{k=1}^{m-i-j+2}}
\dfrac{1}{k!}\left(  \left(  -1\right)  ^{i+j+k}D_{2}^{\left(
j-1+k,i-1\right)  }f\left(  V_{1}\right)  +\left(  -1\right)  ^{j+k}%
D_{3}^{\left(  j-1+k,i-1\right)  }f\left(  V_{1}\right)  \right) \\
\quad\times\dfrac{\left(  \lambda_{2}+\lambda_{3}\right)  ^{i-1}S_{i}\left(
\frac{\lambda_{2}}{\lambda_{2}+\lambda_{3}}\right)  }{i!}\dfrac{S_{j}\left(
\lambda_{2}+\lambda_{3}\right)  }{j!}.
\end{array}
\]

Since $\delta_{m}^{\left[  \Delta_{2}\left(  V_{1},V_{2},V_{3}\right)
;V_{1}\right]  }[f](\boldsymbol{x})\equiv0$ for each $f\in\mathcal{P}%
_{\boldsymbol{x}}^{m}$ and $\dex\left(  P_{m}^{\left[  \Delta_{2}\left(
V_{1},V_{2},V_{3}\right)  ;V_{1}\right]  }\left[  \cdot\right]  \right)  =m$,
the polynomial operator $\widetilde{T}_{m}\left[  \cdot,V_{1}\right]  $
reproduces exactly polynomials up to the degree $m$. For this reason we can
affirm that $\widetilde{T}_{m}\left[  f,V_{1}\right]  $ is the $m$-th order
Taylor polynomial $T_{m}\left[  f,V_{1}\right]  $ for $f$ centered at $V_{1}$%
\begin{equation}
T_{m}\left[  f,V_{1}\right]  \left(  \boldsymbol{x}\right)  :=\sum
\limits_{\substack{\left\vert \alpha\right\vert \leq m\\\alpha\in%
\mathbb{N}
_{0}^{2}}}\frac{1}{\alpha!}\frac{\partial^{\left\vert \alpha\right\vert }%
f}{\partial x^{\alpha_{1}}\partial y^{\alpha_{2}}}\left(  V_{1}\right)
\left(  x-x_{1}\right)  ^{\alpha_{1}}\left(  y-y_{1}\right)  ^{\alpha_{2}}.
\label{TaypolV0}%
\end{equation}

In fact, $\widetilde{T}_{m}\left[  f,V_{1}\right]  $ can be expressed, after
some computation and rearrangement, in terms of some partial derivatives of
$f$ up to the order $m$ at a point $V_{1}\in D$
\[
\widetilde{T}_{m}\left[  f,V_{1}\right]  \left(  \boldsymbol{x}\right)
=\sum\limits_{\substack{\left\vert \alpha\right\vert \leq m\\\alpha\in%
\mathbb{N}
_{0}^{2}}}\frac{\partial^{\left\vert \alpha\right\vert }f}{\partial
x^{\alpha_{1}}\partial y^{\alpha_{2}}}\left(  V_{1}\right)  p_{\alpha}\left(
\boldsymbol{x}\right)  ,
\]
where $p_{\alpha}\left(  \boldsymbol{x}\right)  $ are polynomials of degree at
most $m$. Since $\widetilde{T}_{m}\left[  f,V_{1}\right]  =f$ for each
$f\in\mathcal{P}_{\boldsymbol{x}}^{m}$ then $\left\{  p_{\alpha}\left(
\boldsymbol{x}\right)  :\left\vert \alpha\right\vert \leq m\right\}  $
generates $\mathcal{P}_{\boldsymbol{x}}^{m}$, and therefore $p_{\alpha}\left(
\boldsymbol{x}\right)  \neq0$ for each $\alpha$. Since $\widetilde{T}%
_{m}\left[  \cdot,V_{1}\right]  $ reproduces exactly all polynomials in
$\mathcal{P}_{\boldsymbol{x}}^{m}$ it follows that%
\begin{equation}%
\begin{array}
[c]{cc}%
\widetilde{T}_{m}\left[  T_{m}\left[  f,V_{1}\right]  ,V_{1}\right]  \left(
\boldsymbol{x}\right)  =T_{m}\left[  f,V_{1}\right]  \left(  \boldsymbol{x}%
\right)  , & \boldsymbol{x}\in D;
\end{array}
\label{T_tilde_kequalT_k}%
\end{equation}
on the other hand%
\begin{equation}%
\begin{array}
[c]{cc}%
\widetilde{T}_{m}\left[  T_{m}\left[  f,V_{1}\right]  ,V_{1}\right]  \left(
\boldsymbol{x}\right)  =\widetilde{T}_{m}\left[  f,V_{1}\right]  \left(
\boldsymbol{x}\right)  , & \boldsymbol{x}\in D,
\end{array}
\label{T_tilde_k_f_V0equalT_tilde_kV0}%
\end{equation}
since for each $\alpha$, such that $\left\vert \alpha\right\vert \leq m$%
\[
\frac{\partial^{\left\vert \alpha\right\vert }T_{m}\left[  f,V_{1}\right]
}{\partial x^{\alpha_{1}}\partial y^{\alpha_{2}}}\left(  V_{1}\right)
=\frac{\partial^{\left\vert \alpha\right\vert }f}{\partial x^{\alpha_{1}%
}\partial y^{\alpha_{2}}}\left(  V_{1}\right)  .
\]
Therefore, by equaling the right hand side terms of (\ref{T_tilde_kequalT_k})
and (\ref{T_tilde_k_f_V0equalT_tilde_kV0}) we get%
\[
\widetilde{T}_{m}\left[  f,V_{1}\right]  \left(  \boldsymbol{x}\right)
=T_{m}\left[  f,V_{1}\right]  \left(  \boldsymbol{x}\right)  .
\]

\end{proof}

\begin{corollary}
\label{remgentaypol}In the hypothesis of Theorem \ref{gentaypol} equation
(\ref{gspoltayexp}) extends to all $\boldsymbol{x}\in D$ by setting%
\begin{equation}
R_{m}^{\left[  \Delta_{2}\left(  V_{1},V_{2},V_{3}\right)  ;V_{1}\right]
}\left[  f\right]  \left(  \boldsymbol{x}\right)  =R_{m}^{T}\left[
f,V_{1}\right]  \left(  x,y\right)  \left(  \boldsymbol{x}\right)  -\delta
_{m}^{\left[  \Delta_{2}\left(  V_{1},V_{2},V_{3}\right)  ;V_{1}\right]
}[f](\boldsymbol{x}) \label{RRTminusDgen}%
\end{equation}
where $R_{m}^{T}\left[  f,V_{1}\right]  \left(  \boldsymbol{x}\right)  $ is
the remainder in the Taylor expansion \cite[Ch. 1]{Atk}.
\end{corollary}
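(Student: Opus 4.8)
The plan is to obtain the extended identity as an immediate consequence of Theorem~\ref{gentaypol} combined with the ordinary Taylor expansion of $f$ at the vertex $V_1$. Since $D$ is compact and convex and $f\in C^{m,1}(D)\subset C^{m}(D)$, for every $\boldsymbol{x}\in D$ the segment $[V_1,\boldsymbol{x}]$ lies in $D$, so the Taylor formula with remainder applies and yields
\[
f(\boldsymbol{x})=T_m[f,V_1](\boldsymbol{x})+R_m^{T}[f,V_1](\boldsymbol{x}),\qquad \boldsymbol{x}\in D,
\]
where $T_m[f,V_1]$ is the polynomial (\ref{TaypolV0}) and $R_m^{T}[f,V_1]$ is the corresponding remainder as in \cite[Ch.~1]{Atk}.

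First I would use Theorem~\ref{gentaypol} to eliminate the Taylor polynomial: solving (\ref{PTplusDgen}) for $T_m[f,V_1]$ gives, for all $\boldsymbol{x}\in D$,
\[
T_m[f,V_1](\boldsymbol{x})=P_m^{[\Delta_2(V_1,V_2,V_3);V_1]}[f](\boldsymbol{x})-\delta_m^{[\Delta_2(V_1,V_2,V_3);V_1]}[f](\boldsymbol{x}).
\]
Substituting this into the displayed Taylor formula produces, for every $\boldsymbol{x}\in D$,
\[
f(\boldsymbol{x})=P_m^{[\Delta_2(V_1,V_2,V_3);V_1]}[f](\boldsymbol{x})+\Bigl(R_m^{T}[f,V_1](\boldsymbol{x})-\delta_m^{[\Delta_2(V_1,V_2,V_3);V_1]}[f](\boldsymbol{x})\Bigr),
\]
which is precisely (\ref{gspoltayexp}) once $R_m^{[\Delta_2(V_1,V_2,V_3);V_1]}[f]$ is defined by (\ref{RRTminusDgen}).

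The only two points needing a word of comment are that this really is an \emph{extension} and that the right-hand side of (\ref{RRTminusDgen}) is meaningful. For the first, on $\Delta_2(V_1,V_2,V_3)$ the newly defined remainder equals $f(\boldsymbol{x})-P_m^{[\Delta_2(V_1,V_2,V_3);V_1]}[f](\boldsymbol{x})$ by the identity just derived, and this coincides with the remainder obtained by iterating (\ref{remtaygen}) in Theorem~\ref{gensimexp}; hence the two agree on the simplex. For the second, $\delta_m^{[\Delta_2(V_1,V_2,V_3);V_1]}[f]$ in (\ref{Dgen}) involves directional derivatives of total order $m+1$ (e.g. $D_{31}^{m-j+2}D_1^{(0,j-1)}f$, of order $(m-j+2)+(j-1)=m+1$), and these occur only under integral signs against continuous weights; the assumption $f\in C^{m,1}(D)$ — the $m$-th partials being Lipschitz — is exactly what is needed for those integrals to be well defined, so no smoothness beyond the hypothesis of Theorem~\ref{gentaypol} is required. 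Accordingly I do not anticipate any genuine obstacle: the corollary is a direct algebraic consequence of the additive splitting in Theorem~\ref{gentaypol}, and the proof is essentially the three displayed lines above.
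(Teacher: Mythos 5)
Your argument is exactly the intended one: the corollary is the immediate algebraic consequence of combining the Taylor formula $f=T_m[f,V_1]+R_m^T[f,V_1]$ with the splitting $P_m^{[\Delta_2(V_1,V_2,V_3);V_1]}[f]=T_m[f,V_1]+\delta_m^{[\Delta_2(V_1,V_2,V_3);V_1]}[f]$ from Theorem~\ref{gentaypol}, which is why the paper states it without a separate proof. Your added remarks on consistency with the remainder on the simplex and on well-definedness under the $C^{m,1}$ hypothesis are correct and harmless extras.
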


We are now able to give a bound for the remainder (\ref{RRTminusDgen}). From
here on, we shall use the notation $\left\vert \lambda\right\vert $ for the
absolute value of the real number $\lambda$ and $\left\Vert v\right\Vert _{2}$
for the modulus of the vector $v\in\mathbb{R}^{2}$. We set:%
\begin{equation}
r=\max\left\{  \left\Vert V_{1}-V_{2}\right\Vert _{2},\left\Vert V_{1}%
-V_{3}\right\Vert _{2},\left\Vert V_{2}-V_{3}\right\Vert _{2}\right\}  \text{
} \label{rdef}%
\end{equation}
and%
\begin{equation}
S^{-1}=A\left(  V_{1},V_{2},V_{3}\right)  . \label{Sdef}%
\end{equation}

By settings (\ref{coorbar}) and by the Cauchy-Schwarz inequality, we get the
following inequalities%
\begin{equation}%
\begin{array}
[t]{lll}%
\left\vert \lambda_{i}\right\vert  & \leq rS\left\Vert \boldsymbol{x}%
-V_{1}\right\Vert _{2}, & i=2,3,
\end{array}
\label{ineqmodl2l3}%
\end{equation}%
\begin{equation}%
\begin{array}
[t]{ll}%
\left\vert \lambda_{2}+\lambda_{3}\right\vert  & \leq rS\left\Vert
\boldsymbol{x}-V_{1}\right\Vert _{2}%
\end{array}
\label{ineqmodl2pl3}%
\end{equation}
and%
\begin{equation}%
\begin{array}
[c]{ccc}%
\left\vert \dfrac{\partial\lambda_{i}}{\partial x}\right\vert \leq rS, &
\left\vert \dfrac{\partial\lambda_{i}}{\partial y}\right\vert \leq rS, &
i=2,3.
\end{array}
\label{modDxlambdaiDylambai}%
\end{equation}
The following Lemma provides bounds for the derivatives of $f$ along the
directed sides of the simplex (\ref{D0D1D2}).

\begin{lemma}
\label{Lemma:modDjf}Let $f\in C^{m,1}\left(  D\right)  $. The derivatives of
$f$ along the directed sides of the simplex $\Delta_{2}\left(  V_{1}%
,V_{2},V_{3}\right)  $ satisfy
\begin{equation}%
\begin{array}
[c]{cccc}%
\left\vert D_{j}^{\beta}f\left(  \boldsymbol{x}\right)  \right\vert
\leq2^{\left\vert \beta\right\vert }r^{\left\vert \beta\right\vert }\left\vert
f\right\vert _{\left\vert \beta\right\vert -1,1}, & j=1,2,3 & \left\vert
\beta\right\vert =m+1, & \boldsymbol{x}\in D.
\end{array}
\label{seminorm2Di}%
\end{equation}

\end{lemma}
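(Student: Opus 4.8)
The plan is to peel off one directional differentiation from $D_j^\beta$, so that $D_j^\beta f$ becomes a single first-order directional derivative of a Lipschitz function, and then to control the numerical constants by expanding the remaining $m$-th order operator into ordinary partial derivatives.

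First I would note that, by \eqref{D_ij(f)}--\eqref{D0D1D2}, for $|\beta|=m+1$ the operator $D_j^\beta$ is a composition of $m+1$ first-order directional derivatives, each of the form $D_{pq}=(V_p-V_q)\cdot\nabla$ along a vector $w=V_p-V_q$ with $\|w\|_2\le r$ by \eqref{rdef}; writing $w=(a,b)$ this forces $|a|\le r$ and $|b|\le r$. Splitting off the outermost factor, write $D_j^\beta f=D_{pq}g$ with $g:=D_j^{\beta'}f$, $|\beta'|=m$. Since $f\in C^{m,1}(D)\subset C^{m}(D)$, the mixed partials of $f$ of order $m$ commute, so expanding the product $D_j^{\beta'}=\prod_{t=1}^{m}(a_t\partial_x+b_t\partial_y)$, with $|a_t|,|b_t|\le r$, gives
\[
g=\sum_{l=0}^{m}c_l\,\frac{\partial^{m} f}{\partial x^{l}\partial y^{m-l}},\qquad \sum_{l=0}^{m}|c_l|\le\prod_{t=1}^{m}\bigl(|a_t|+|b_t|\bigr)\le (2r)^{m},
\]
the middle inequality being the triangle inequality applied term by term to the expanded product. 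Each partial derivative of order $m$ is Lipschitz continuous on $D$ with constant at most $|f|_{m,1}$ by \eqref{modfm,1}, so $g$ is Lipschitz on $D$ with $\mathrm{Lip}(g)\le\bigl(\sum_{l}|c_l|\bigr)\,|f|_{m,1}\le (2r)^{m}|f|_{m,1}$.

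Then, at a point $\boldsymbol{x}\in D$ where $D_{pq}g(\boldsymbol{x})=\tfrac{d}{dt}g(\boldsymbol{x}+tw)\big|_{t=0}$ exists, the difference quotient defining it is bounded in modulus by $\mathrm{Lip}(g)\,\|w\|_2$, whence
\[
|D_j^\beta f(\boldsymbol{x})|=|D_{pq}g(\boldsymbol{x})|\le \mathrm{Lip}(g)\,\|w\|_2\le (2r)^{m}|f|_{m,1}\cdot r=2^{m} r^{m+1}|f|_{m,1}\le 2^{m+1}r^{m+1}|f|_{m,1},
\]
which is exactly \eqref{seminorm2Di}, since $|\beta|=m+1$ and $|f|_{|\beta|-1,1}=|f|_{m,1}$.

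The hard part here is not the arithmetic but the meaning of the left-hand side: $D_j^\beta f$ is a derivative of order $m+1$ of a function only of class $C^{m,1}$, so it need not exist at every point of $D$, and the statement should be read as holding wherever it is defined. This is almost everywhere in $D$, and in particular along the segments $V_1+t(V_i-V_1)$ over which $D_j^\beta f$ is integrated in \eqref{Dgen}, because the restriction of the Lipschitz function $g$ above to any line segment is again Lipschitz, hence differentiable almost everywhere on it. Everything else is the routine bookkeeping behind the coefficient bound $\sum_l|c_l|\le\prod_t(|a_t|+|b_t|)\le(2r)^{m}$.
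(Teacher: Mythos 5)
Your proof is correct and follows essentially the same route as the paper's: expand the composition of directional derivatives into coordinate partials, bound the resulting coefficient sum by $2^{|\beta|}r^{|\beta|}$ via the binomial theorem and $\left\Vert V_p-V_q\right\Vert _{2}\leq r$, and control the top-order derivative by the Lipschitz seminorm $\left\vert f\right\vert _{m,1}$. The only difference is organizational -- you peel off one directional factor and bound it as a difference quotient of the Lipschitz function $g$, whereas the paper expands all $m+1$ factors and bounds each order-$(m+1)$ partial by a coordinate difference quotient -- which makes your treatment of where $D_j^{\beta}f$ actually exists slightly more careful (and yields the marginally sharper constant $2^{m}$ before you relax it to $2^{m+1}$).
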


\begin{proof}
Let us consider the case $j=1$. By (\ref{D0D1D2}) and (\ref{D_ij(f)}) we have
\[%
\begin{array}
[c]{l}%
D_{1}^{\beta}f\left(  \boldsymbol{x}\right)  =D_{21}^{\beta_{1}}D_{31}%
^{\beta_{2}}f\left(  \boldsymbol{x}\right)  =\sum\limits_{i=0}^{\beta_{1}%
}\binom{\beta_{1}}{i}(x_{2}-x_{1})^{\beta_{1}-i}\left(  y_{2}-y_{1}\right)
^{i}\\
\quad\sum\limits_{j=0}^{\beta_{2}}\binom{\beta_{2}}{j}(x_{3}-x_{1})^{\beta
_{2}-j}\left(  y_{3}-y_{1}\right)  ^{j}\frac{\partial^{\left\vert
\beta\right\vert }f\left(  \boldsymbol{x}\right)  }{\partial x^{\left\vert
\beta\right\vert -i-j}\partial y^{i+j}}.
\end{array}
\]
Since $\left\vert \beta\right\vert \geq1$, at least one between $\left\vert
\beta\right\vert -i-j$ and $i+j$ is greater than or equal to $1$,
$i=0,\dots,\beta_{1}$, $j=0,\dots,\beta_{2}$. If $\left\vert \beta\right\vert
-i-j\geq1$%
\[
\left\vert \frac{\partial^{\left\vert \beta\right\vert }f\left(
\boldsymbol{x}\right)  }{\partial x^{\left\vert \beta\right\vert -i-j}\partial
y^{i+j}}\right\vert =\left\vert \lim_{h\rightarrow0}\frac{\frac{\partial
^{\left\vert \beta\right\vert -1}f\left(  x+h,y\right)  }{\partial
x^{\left\vert \beta\right\vert -i-j-1}\partial y^{i+j}}-\frac{\partial
^{\left\vert \beta\right\vert -1}f\left(  x,y\right)  }{\partial x^{\left\vert
\beta\right\vert -i-j-1}\partial y^{i+j}}}{h}\right\vert \leq\left\vert
f\right\vert _{\left\vert \beta\right\vert -1,1}%
\]
otherwise%
\[
\left\vert \frac{\partial^{\left\vert \beta\right\vert }f\left(
\boldsymbol{x}\right)  }{\partial x^{\left\vert \beta\right\vert -i-j}\partial
y^{i+j}}\right\vert =\left\vert \lim_{h\rightarrow0}\frac{\frac{\partial
^{\left\vert \beta\right\vert -1}f\left(  x,y+h\right)  }{\partial
x^{\left\vert \beta\right\vert -i-j}\partial y^{i+j-1}}-\frac{\partial
^{\left\vert \beta\right\vert -1}f\left(  x,y\right)  }{\partial x^{\left\vert
\beta\right\vert -i-j}\partial y^{i+j-1}}}{h}\right\vert \leq\left\vert
f\right\vert _{\left\vert \beta\right\vert -1,1}%
\]
$i=0,\dots,\beta_{1}$, $j=0,\dots,\beta_{2}$. Therefore we have%
\[%
\begin{array}
[c]{l}%
\left\vert D_{1}^{\beta}f\left(  \boldsymbol{x}\right)  \right\vert
=\left\vert \sum\limits_{i=0}^{\beta_{1}}\binom{\beta_{1}}{i}(x_{2}%
-x_{1})^{\beta_{1}-i}\left(  y_{2}-y_{1}\right)  ^{i}\right. \\
\qquad\left.  \sum\limits_{j=0}^{\beta_{2}}\binom{\beta_{2}}{j}(x_{3}%
-x_{1})^{\beta_{2}-j}\left(  y_{3}-y_{1}\right)  ^{j}\frac{\partial
^{\left\vert \beta\right\vert }f\left(  \boldsymbol{x}\right)  }{\partial
x^{\left\vert \beta\right\vert -i-j}\partial y^{i+j}}\right\vert \\
\qquad\leq\sum\limits_{i=0}^{\beta_{1}}\binom{\beta_{1}}{i}\left\vert
\left\vert x_{2}-x_{1}\right\vert \right\vert _{2}^{\beta_{1}-i}\left\vert
\left\vert y_{2}-y_{1}\right\vert \right\vert _{2}^{i}\\
\qquad\sum\limits_{j=0}^{\beta_{2}}\binom{\beta_{2}}{j}\left\vert \left\vert
x_{3}-x_{1}\right\vert \right\vert _{2}^{\beta_{2}-j}\left\vert \left\vert
y_{3}-y_{1}\right\vert \right\vert _{2}^{j}\left\vert \frac{\partial
^{\left\vert \beta\right\vert }f\left(  \boldsymbol{x}\right)  }{\partial
x^{\left\vert \beta\right\vert -i-j}\partial y^{i+j}}\right\vert \\
\qquad\leq2^{\left\vert \beta\right\vert }\left\vert \left\vert V_{2}%
-V_{1}\right\vert \right\vert _{2}^{\beta_{1}}\left\vert \left\vert
V_{3}-V_{1}\right\vert \right\vert _{2}^{\beta_{2}}\left\vert f\right\vert
_{\left\vert \beta\right\vert -1,1}\\
\qquad\leq2^{\left\vert \beta\right\vert }r^{\left\vert \beta\right\vert
}\left\vert f\right\vert _{\left\vert \beta\right\vert -1,1}.
\end{array}
\]
The cases $j=2,3$ are analogous.
\end{proof}

Now, we are able to prove the following Theorem.

\begin{theorem}
\label{deltabound}Let $f$ be a function of class $C^{m,1}\left(  D\right)  $.
Then for each $\boldsymbol{x}\in D$ we have the following bound for the
derivative $\dfrac{\partial^{\alpha+\beta}\delta_{m}^{\left[  \Delta
_{2}\left(  V_{1},V_{2},V_{3}\right)  ;V_{1}\right]  }\left[  f\right]
\left(  \boldsymbol{x}\right)  }{\partial x^{\alpha}\partial y^{\beta}}$ of
the difference (\ref{Dgen}), for each $\alpha,\beta:$ $0\leq\alpha+\beta\leq
m$%
\begin{equation}
\left\vert \dfrac{\partial^{\alpha+\beta}\delta_{m}^{\left[  \Delta_{2}\left(
V_{1},V_{2},V_{3}\right)  ;V_{1}\right]  }\left[  f\right]  \left(
\boldsymbol{x}\right)  }{\partial x^{\alpha}\partial y^{\beta}}\right\vert
\leq\left\vert f\right\vert _{m,1}C_{\alpha,\beta}\left(  m\right)
{\displaystyle\sum\limits_{l=\max\left\{  1,\alpha+\beta\right\}  }^{m}}
r^{m+1-l}\left(  r^{2}S\right)  ^{l}\left\Vert \boldsymbol{x}-V_{1}\right\Vert
_{2}^{l-\left(  \alpha+\beta\right)  } \label{deltab}%
\end{equation}
where $C_{\alpha,\beta}\left(  m\right)  $ is a constant independent of $f$ or
$\boldsymbol{x}$ explicitly computable.
\end{theorem}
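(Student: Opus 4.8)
The plan is to exploit the product structure of (\ref{Dgen}): each summand is the product of an $\boldsymbol{x}$-independent factor — an integral remainder of Taylor type built from directional derivatives of $f$ along one fixed side of $\Delta_{2}(V_{1},V_{2},V_{3})$ — and a factor depending only on the barycentric coordinates $\lambda_{2},\lambda_{3}$. Consequently $\partial^{\alpha+\beta}/\partial x^{\alpha}\partial y^{\beta}$ acts only on the latter factor, and the two factors can be estimated separately and recombined.

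First I would bound the $\boldsymbol{x}$-independent factors. Using $D_{1}^{(0,j-1)}=D_{31}^{j-1}$ and the relations $D_{ij}=-D_{ji}$, each of the operators $D_{31}^{m-j+2}D_{1}^{(0,j-1)}$, $D_{21}^{m-j-i+3}D_{2}^{(j-1,i-1)}$ and $D_{31}^{m-j-i+3}D_{3}^{(j-1,i-1)}$ reduces, up to sign, to one of the operators $D_{s}^{\beta}$ of (\ref{D0D1D2}) with $s\in\{1,2,3\}$ and $|\beta|=m+1$; hence by Lemma \ref{Lemma:modDjf} the corresponding derivative of $f$ is bounded on $D$ by $2^{m+1}r^{m+1}\left\vert f\right\vert_{m,1}$. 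Since $\int_{0}^{1}(1-t)^{p}\,dt=1/(p+1)$, every integral occurring in (\ref{Dgen}) is therefore at most $2^{m+1}r^{m+1}\left\vert f\right\vert_{m,1}/(p+1)!$, where $p$ denotes the exponent of $(1-t)$ in that summand.

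Next I would rewrite the barycentric factors as genuine polynomials of bounded degree in $\lambda_{2},\lambda_{3}$. The factor $S_{j}(\lambda_{2}+\lambda_{3})$ of the first sum is a polynomial in $\lambda_{2}+\lambda_{3}$ without constant term (because $S_{j}(0)=0$), so it is a polynomial in $\lambda_{2},\lambda_{3}$ all of whose monomials have total degree between $1$ and $j\le m$. For the second sum, writing $S_{i}(u)=\sum_{k=1}^{i}c_{k}^{(i)}u^{k}$ and $S_{j}(u)=\sum_{l=1}^{j}d_{l}^{(j)}u^{l}$,
\[
(\lambda_{2}+\lambda_{3})^{i-1}S_{i}\!\Big(\tfrac{\lambda_{2}}{\lambda_{2}+\lambda_{3}}\Big)\,S_{j}(\lambda_{2}+\lambda_{3})=\sum_{k=1}^{i}\sum_{l=1}^{j}c_{k}^{(i)}d_{l}^{(j)}\,\lambda_{2}^{\,k}(\lambda_{2}+\lambda_{3})^{\,i-1-k+l};
\]
the only negative exponent of $\lambda_{2}+\lambda_{3}$ that can occur in $(\lambda_{2}+\lambda_{3})^{i-1}S_{i}(\lambda_{2}/(\lambda_{2}+\lambda_{3}))$ is $-1$ (coming from the monic leading term $u^{i}$ of $S_{i}$), and multiplication by $S_{j}(\lambda_{2}+\lambda_{3})=d_{1}^{(j)}(\lambda_{2}+\lambda_{3})+\cdots$ removes it, so the product is a polynomial in $\lambda_{2},\lambda_{3}$ whose monomials have total degree between $i$ and $i+j-1\le m$ (using $j\le m-i+1$). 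This makes the estimate valid for every $\boldsymbol{x}\in D$, not only on the simplex where $\lambda_{2}+\lambda_{3}>0$.

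Finally I would differentiate and assemble. Since $\lambda_{2},\lambda_{3}$ are affine in $(x,y)$, all their second partials vanish, so for a monomial $\lambda_{2}^{a}\lambda_{3}^{b}$ of total degree $l=a+b$ the derivative $\partial^{\alpha+\beta}(\lambda_{2}^{a}\lambda_{3}^{b})/\partial x^{\alpha}\partial y^{\beta}$ vanishes when $\alpha+\beta>l$ and, when $\alpha+\beta\le l$, is a Leibniz sum of at most $\binom{l}{\alpha,\beta,l-\alpha-\beta}$ terms, each the product of $\alpha+\beta$ first partials of $\lambda_{2},\lambda_{3}$ — every one of modulus $\le rS$ by (\ref{modDxlambdaiDylambai}) — and $l-(\alpha+\beta)$ undifferentiated factors $\lambda_{2},\lambda_{3}$ — every one of modulus $\le rS\left\Vert\boldsymbol{x}-V_{1}\right\Vert_{2}$ by (\ref{ineqmodl2l3}); hence its modulus is $\le\binom{l}{\alpha,\beta,l-\alpha-\beta}(rS)^{l}\left\Vert\boldsymbol{x}-V_{1}\right\Vert_{2}^{\,l-(\alpha+\beta)}$. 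Multiplying by the bound of the first step, using the identity $r^{m+1}(rS)^{l}=r^{m+1-l}(r^{2}S)^{l}$, summing over $i,j$ and regrouping the resulting terms according to the total degree $l$ of the monomial — which after differentiation always satisfies $\max\{1,\alpha+\beta\}\le l\le m$ — produces exactly (\ref{deltab}), with $C_{\alpha,\beta}(m)$ collecting the powers of $2$, the reciprocal factorials (from the integral remainders and from $i!\,j!$), the Bernoulli coefficients $c_{k}^{(i)},d_{l}^{(j)}$, the multinomial counts and the number of index pairs. I expect the main obstacle to be the third step — recognising the apparently singular product as a polynomial in $\lambda_{2},\lambda_{3}$ of the correct degree range — together with the care needed in the regrouping so that the sum over $l$ has precisely the stated limits; the constant-chasing that remains is routine.
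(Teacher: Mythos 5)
Your proposal is correct and follows essentially the same route as the paper's proof: both expand the Bernoulli factors $S_n$ into monomials in $\lambda_2,\lambda_3$ (verifying that the apparently singular product is a genuine polynomial of total degree between $i$ and $i+j-1\le m$), bound the $\boldsymbol{x}$-independent integral factors via Lemma \ref{Lemma:modDjf} to get $2^{m+1}r^{m+1}\left\vert f\right\vert_{m,1}$ per integrand, and control the differentiated monomials through (\ref{ineqmodl2l3}) and (\ref{modDxlambdaiDylambai}). The only cosmetic difference is that the paper first converts $\partial/\partial x,\partial/\partial y$ into the directional derivatives $D_{21},D_{31}$, which act as coordinate derivatives on $\lambda_2,\lambda_3$, whereas you apply Leibniz directly to the monomials; this affects only the bookkeeping inside $C_{\alpha,\beta}(m)$ (where your multinomial count of Leibniz terms is a slight undercount, harmless since the theorem asserts only an explicitly computable constant).
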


\begin{proof}
From properties of Bernoulli polynomials \cite{Jor}, \cite{CosDelGua}%
\[
B_{n}\left(  t\right)  =%
{\textstyle\sum\limits_{k=0}^{n}}
\dbinom{n}{k}B_{k}t^{n-k},\quad n=0,1,2,\ldots
\]
it follows that%
\begin{equation}%
\begin{array}
[t]{ll}%
S_{n}\left(  t\right)  & =B_{n}\left(  t\right)  -B_{n}\\
& =%
{\textstyle\sum\limits_{k=0}^{n-1}}
\dbinom{n}{k}B_{k}t^{n-k},\quad n=1,2,\ldots
\end{array}
\label{Snassum}%
\end{equation}
Moreover, we express the derivative $\dfrac{\partial^{\alpha+\beta}}{\partial
x^{\alpha}\partial y^{\beta}}$ as a combination of derivatives along the
directed line segments from $V_{2}$ to $V_{1}$ and from $V_{3}$ to $V_{1}$. In
fact, from relations (\ref{D_ij(f)}) we obtain%
\[%
\begin{array}
[c]{c}%
\frac{\partial}{\partial x}=\frac{\partial\lambda_{3}}{\partial x}D_{31}%
+\frac{\partial\lambda_{2}}{\partial x}D_{21},\\
\frac{\partial}{\partial y}=\frac{\partial\lambda_{3}}{\partial y}D_{31}%
+\frac{\partial\lambda_{2}}{\partial y}D_{21},
\end{array}
\]
and then%
\begin{equation}%
\begin{array}
[c]{c}%
\dfrac{\partial^{\alpha+\beta}\delta_{m}^{\left[  \Delta_{2}\left(
V_{1},V_{2},V_{3}\right)  ;V_{1}\right]  }\left[  f\right]  \left(
\boldsymbol{x}\right)  }{\partial x^{\alpha}\partial y^{\beta}}=%
{\displaystyle\sum\limits_{p=0}^{\alpha}}
{\displaystyle\sum\limits_{q=0}^{\beta}}
\dbinom{\alpha}{p}\dbinom{\beta}{q}\left(  \dfrac{\partial\lambda_{2}%
}{\partial x}\right)  ^{p}\left(  \dfrac{\partial\lambda_{3}}{\partial
x}\right)  ^{\alpha-p}\\
\times\left(  \dfrac{\partial\lambda_{2}}{\partial y}\right)  ^{q}\left(
\dfrac{\partial\lambda_{3}}{\partial y}\right)  ^{\beta-q}D_{21}^{p+q}%
D_{31}^{\alpha+\beta-p-q}\delta_{m}^{\left[  \Delta_{2}\left(  V_{1}%
,V_{2},V_{3}\right)  ;V_{1}\right]  }\left[  f\right]  \left(  \boldsymbol{x}%
\right)  .
\end{array}
\label{DxalphaDybetaf}%
\end{equation}
By taking the modulus of both sides of (\ref{DxalphaDybetaf}) and by using
relations (\ref{modDxlambdaiDylambai}) we get%
\begin{equation}
\left\vert \dfrac{\partial^{\alpha+\beta}\delta_{m}^{\left[  \Delta_{2}\left(
V_{1},V_{2},V_{3}\right)  ;V_{1}\right]  }[f](\boldsymbol{x})}{\partial
x^{\alpha}\partial y^{\beta}}\right\vert \leq%
{\displaystyle\sum\limits_{p=0}^{\alpha}}
{\displaystyle\sum\limits_{q=0}^{\beta}}
\dbinom{\alpha}{p}\dbinom{\beta}{q}\left(  rS\right)  ^{\alpha+\beta
}\left\vert D_{21}^{p+q}D_{31}^{\alpha+\beta-p-q}\delta_{m}^{\left[
\Delta_{2}\left(  V_{1},V_{2},V_{3}\right)  ;V_{1}\right]  }\left[  f\right]
\left(  \boldsymbol{x}\right)  \right\vert . \label{modDxalphaDybetaf}%
\end{equation}
Therefore we need to calculate and to bound
\begin{equation}
D_{21}^{\gamma_{1}}D_{31}^{\gamma_{2}}\delta_{m}^{\left[  \Delta_{2}\left(
V_{1},V_{2},V_{3}\right)  ;V_{1}\right]  }[f](\boldsymbol{x})
\label{D1gammaDelta}%
\end{equation}
for each $\gamma_{1}:=p+q$, $\gamma_{2}:=\alpha+\beta-p-q$, $p=0,\dots,\alpha
$; \thinspace$q=0,\dots,\beta$. In order to calculate (\ref{D1gammaDelta}) we
substitute relations (\ref{Snassum}) in equation (\ref{Dgen}) and, by the
Binomial Theorem, we get%
\begin{equation}%
\begin{array}
[t]{l}%
\delta_{m}^{\left[  \Delta_{2}\left(  V_{1},V_{2},V_{3}\right)  ;V_{1}\right]
}[f](\boldsymbol{x})=\\
\quad%
{\displaystyle\sum\limits_{j=1}^{m}}
{\displaystyle\int_{0}^{1}}
\frac{D_{31}^{m-j+2}D_{1}^{\left(  0,j-1\right)  }f\left(  V_{1}+t\left(
V_{3}-V_{1}\right)  \right)  \left(  1-t\right)  ^{m-j+1}}{\left(
m-j+1\right)  !}dt\dfrac{%
{\textstyle\sum\limits_{k=0}^{j-1}}
\binom{j}{k}B_{k}\left(  \lambda_{2}+\lambda_{3}\right)  ^{j-k}}{j!}\\
\quad+%
{\displaystyle\sum\limits_{i=1}^{m}}
{\displaystyle\sum\limits_{j=1}^{m-i+1}}
\left(  \left(  -1\right)  ^{i+j}%
{\displaystyle\int_{0}^{1}}
\frac{D_{21}^{m-j-i+3}D_{2}^{\left(  j-1,i-1\right)  }f\left(  V_{1}+t\left(
V_{2}-V_{1}\right)  \right)  \left(  1-t\right)  ^{m-j-i+2}}{(m-j-i+2)!}%
dt\right. \\
\quad+\left.  \left(  -1\right)  ^{j}%
{\displaystyle\int_{0}^{1}}
\frac{D_{31}^{m-j-i+3}D_{3}^{\left(  j-1,i-1\right)  }f\left(  V_{1}+t\left(
V_{3}-V_{1}\right)  \right)  \left(  1-t\right)  ^{m-j-i+2}}{(m-j-i+2)!}%
dt\right) \\
\quad\times\dfrac{%
{\textstyle\sum\limits_{k=0}^{i-1}}
\binom{i}{k}B_{k}}{i!}\dfrac{\sum\limits_{l=0}^{j-1}\binom{j}{l}B_{l}%
\sum\limits_{v=0}^{k+j-1-l}\binom{k+j-1-l}{v}\lambda_{2}^{i+j-1-l-v}%
\lambda_{3}^{v}}{j!}.
\end{array}
\label{deltamsem}%
\end{equation}
The application of the operator $D_{21}^{\gamma_{1}}D_{31}^{\gamma_{2}}$ to
$\delta_{m}^{\left[  \Delta_{2}\left(  V_{1},V_{2},V_{3}\right)
;V_{1}\right]  }[f](\boldsymbol{x})$ causes the disappearing of some addenda
on the right hand side of (\ref{deltamsem}) therefore, to highlight this
operation we make some changes of dummy index. Firstly we set $j-k=\kappa$ in
the first sequence of sums $%
{\displaystyle\sum\limits_{j=1}^{m}}
{\displaystyle\sum\limits_{k=1}^{j}}
$\ and $i+j-1=\iota$ in the second sequence of sums $%
{\displaystyle\sum\limits_{i=1}^{m}}
{\displaystyle\sum\limits_{j=1}^{m-i+1}}
$ and we get, by writing $k$ instead of $\kappa$ and $j$ instead of $\iota$
\[%
\begin{array}
[t]{l}%
\delta_{m}^{\left[  \Delta_{2}\left(  V_{1},V_{2},V_{3}\right)  ;V_{1}\right]
}[f](\boldsymbol{x})=\\
\quad%
{\displaystyle\sum\limits_{j=1}^{m}}
{\displaystyle\int_{0}^{1}}
\frac{D_{31}^{m-j+2}D_{1}^{\left(  0,j-1\right)  }f\left(  V_{1}+t\left(
V_{3}-V_{1}\right)  \right)  \left(  1-t\right)  ^{m-j+1}}{\left(
m-j+1\right)  !}dt\dfrac{%
{\textstyle\sum\limits_{k=1}^{j}}
\binom{j}{j-\kappa}B_{j-k}\left(  \lambda_{2}+\lambda_{3}\right)  ^{k}}{j!}\\
\quad%
{\displaystyle\sum\limits_{i=1}^{m}}
{\displaystyle\sum\limits_{j=i}^{m}}
\left(  \left(  -1\right)  ^{j+1}%
{\displaystyle\int_{0}^{1}}
\frac{D_{21}^{m-j-1+3}D_{2}^{\left(  j-i,i-1\right)  }f\left(  V_{1}+t\left(
V_{2}-V_{1}\right)  \right)  \left(  1-t\right)  ^{m-j+1}}{(m-j+1)!}dt\right.
\\
\quad+\left.  \left(  -1\right)  ^{j-i+1}%
{\displaystyle\int_{0}^{1}}
\frac{D_{31}^{m-j+2}D_{3}^{\left(  j-i,i-1\right)  }f\left(  V_{1}+t\left(
V_{3}-V_{1}\right)  \right)  \left(  1-t\right)  ^{m-j+1}}{(m-j+1)!}dt\right)
\\
\quad\times\dfrac{%
{\displaystyle\sum\limits_{k=0}^{i-1}}
{\displaystyle\sum\limits_{l=0}^{j-i}}
\binom{j-i+1}{l}B_{l}%
{\displaystyle\sum\limits_{v=0}^{k+j-i-l}}
\binom{k+j-i-l}{v}\lambda_{2}^{j-l-v}\lambda_{3}^{v}}{i!(j-i+1)!}.
\end{array}
\]
Secondly we set $j-l=\lambda$ in the sequence of sums $%
{\displaystyle\sum\limits_{j=i}^{m}}
{\displaystyle\sum\limits_{l=i}^{j}}
$ and we get, by writing $l$ instead of $\lambda$,%
\[%
\begin{array}
[t]{l}%
\delta_{m}^{\left[  \Delta_{2}\left(  V_{1},V_{2},V_{3}\right)  ;V_{1}\right]
}[f](\boldsymbol{x})=\\
\quad%
{\displaystyle\sum\limits_{j=1}^{m}}
{\displaystyle\int_{0}^{1}}
\frac{D_{31}^{m-j+2}D_{1}^{\left(  0,j-1\right)  }f\left(  V_{1}+t\left(
V_{3}-V_{1}\right)  \right)  \left(  1-t\right)  ^{m-j+1}}{\left(
m-j+1\right)  !}dt\dfrac{%
{\textstyle\sum\limits_{k=1}^{j}}
\binom{j}{j-k}B_{j-k}\left(  \lambda_{2}+\lambda_{3}\right)  ^{k}}{j!}\\
\quad+%
{\displaystyle\sum\limits_{i=1}^{m}}
{\displaystyle\sum\limits_{j=i}^{m}}
\left(  \left(  -1\right)  ^{j+1}%
{\displaystyle\int_{0}^{1}}
\frac{D_{21}^{m-j-1+3}D_{2}^{\left(  j-i,i-1\right)  }f\left(  V_{1}+t\left(
V_{2}-V_{1}\right)  \right)  \left(  1-t\right)  ^{m-j+1}}{(m-j+1)!}dt\right.
\\
\quad+\left.  \left(  -1\right)  ^{j-i+1}%
{\displaystyle\int_{0}^{1}}
\frac{D_{31}^{m-j+2}D_{3}^{\left(  j-i,i-1\right)  }f\left(  V_{1}+t\left(
V_{3}-V_{1}\right)  \right)  \left(  1-t\right)  ^{m-j+1}}{(m-j+1)!}dt\right)
\\
\quad\times\dfrac{\sum\limits_{k=0}^{i-1}\binom{i}{k}B_{k}\sum\limits_{l=i}%
^{j}\binom{j-i+1}{j-l}B_{j-l}\sum\limits_{v=0}^{k-i-l}\binom{k-i-l}{v}%
\lambda_{2}^{l-v}\lambda_{3}^{v}}{i!(j-i+1)!}.
\end{array}
\]
Thirdly we change the order of summations $%
{\displaystyle\sum\limits_{j=1}^{m}}
{\displaystyle\sum\limits_{k=1}^{j}}
=%
{\displaystyle\sum\limits_{k=1}^{m}}
{\displaystyle\sum\limits_{j=k}^{m}}
$ and $%
{\displaystyle\sum\limits_{i=1}^{m}}
{\displaystyle\sum\limits_{j=i}^{m}}
{\displaystyle\sum\limits_{l=i}^{j}}
=%
{\displaystyle\sum\limits_{l=1}^{m}}
{\displaystyle\sum\limits_{i=1}^{l}}
{\displaystyle\sum\limits_{j=l}^{m}}
$ and we get%
\[%
\begin{array}
[t]{l}%
\delta_{m}^{\left[  \Delta_{2}\left(  V_{1},V_{2},V_{3}\right)  ;V_{1}\right]
}[f](\boldsymbol{x})=\\
\quad%
{\displaystyle\sum\limits_{k=1}^{m}}
{\displaystyle\sum\limits_{j=k}^{m}}
{\displaystyle\int_{0}^{1}}
\frac{D_{31}^{m-j+2}D_{1}^{\left(  0,j-1\right)  }f\left(  V_{1}+t\left(
V_{3}-V_{1}\right)  \right)  \left(  1-t\right)  ^{m-j+1}}{\left(
m-j+1\right)  !}dt\dfrac{\tbinom{j}{j-k}B_{j-k}\left(  \lambda_{2}+\lambda
_{3}\right)  ^{k}}{j!}\\
\quad+%
{\displaystyle\sum\limits_{l=1}^{m}}
{\displaystyle\sum\limits_{i=1}^{l}}
{\displaystyle\sum\limits_{j=l}^{m}}
\left(  \left(  -1\right)  ^{j+1}%
{\displaystyle\int_{0}^{1}}
\frac{D_{21}^{m-j-1+3}D_{2}^{\left(  j-i,i-1\right)  }f\left(  V_{1}+t\left(
V_{2}-V_{1}\right)  \right)  \left(  1-t\right)  ^{m-j+1}}{(m-j+1)!}dt\right.
\\
\quad+\left.  \left(  -1\right)  ^{j-i+1}%
{\displaystyle\int_{0}^{1}}
\frac{D_{31}^{m-j+2}D_{3}^{\left(  j-i,i-1\right)  }f\left(  V_{1}+t\left(
V_{3}-V_{1}\right)  \right)  \left(  1-t\right)  ^{m-j+1}}{(m-j+1)!}dt\right)
\\
\quad\times\dfrac{\sum\limits_{k=0}^{i-1}\binom{i}{k}B_{k}\binom{j-i+1}%
{j-l}B_{j-l}\sum\limits_{v=0}^{k-i-l}\binom{k-i-l}{v}\lambda_{2}^{l-v}%
\lambda_{3}^{v}}{i!(j-i+1)!}.
\end{array}
\]
Now it is easy to calculate (\ref{D1gammaDelta}) by using the relations%
\[%
\begin{array}
[c]{cc}%
D_{21}\lambda_{2}=1, & D_{21}\lambda_{3}=0\\
D_{31}\lambda_{2}=0, & D_{31}\lambda_{3}=1.
\end{array}
\]
In fact%
\[
D_{21}^{\gamma_{1}}D_{31}^{\gamma_{2}}\left(  \lambda_{2}+\lambda_{3}\right)
^{k}=\left\{
\begin{array}
[c]{cc}%
\dfrac{k!}{\left(  k-\left\vert \gamma\right\vert \right)  !}\left(
\lambda_{2}+\lambda_{3}\right)  ^{k-\left\vert \gamma\right\vert }, &
k\geq\left\vert \gamma\right\vert ,\\
0, & \text{otherwise,}%
\end{array}
\right.
\]%
\[
D_{21}^{\gamma_{1}}D_{31}^{\gamma_{2}}\left(  \lambda_{2}^{l-v}\lambda_{3}%
^{v}\right)  =\left\{
\begin{array}
[c]{cc}%
\dfrac{\left(  l-v\right)  !}{\left(  l-v-\gamma_{1}\right)  !}\dfrac
{v!}{\left(  v-\gamma_{2}\right)  !}\lambda_{2}^{l-v-\gamma_{1}}\lambda
_{3}^{v-\gamma_{2}}, & \gamma_{2}\leq v\leq l-\gamma_{1},\\
0, & \text{otherwise,}%
\end{array}
\right.
\]
and by the change of dummy index $v+\gamma_{1}=\nu$ we get, by writing $v$
instead of $\nu$ and $l$ instead of $k$ in the first sequence of sums
\begin{equation}%
\begin{array}
[t]{l}%
D_{21}^{\gamma_{1}}D_{31}^{\gamma_{2}}\delta_{m}^{\left[  \Delta_{2}\left(
V_{1},V_{2},V_{3}\right)  ;V_{1}\right]  }[f](\boldsymbol{x})=\\
\quad%
{\displaystyle\sum\limits_{l=\max\{1,\left\vert \gamma\right\vert \}}^{m}}
{\displaystyle\sum\limits_{j=l}^{m}}
{\displaystyle\int_{0}^{1}}
\frac{D_{31}^{m-j+2}D_{1}^{\left(  0,j-1\right)  }f\left(  V_{1}+t\left(
V_{3}-V_{1}\right)  \right)  \left(  1-t\right)  ^{m-j+1}}{\left(
m-j+1\right)  !}dt\dfrac{\binom{j}{j-l}B_{j-l}\dfrac{l!}{\left(  l-\left\vert
\gamma\right\vert \right)  !}\left(  \lambda_{2}+\lambda_{3}\right)
^{l-\left\vert \gamma\right\vert }}{j!}\\
\quad%
{\displaystyle\sum\limits_{l=\max\{1,\left\vert \gamma\right\vert \}}^{m}}
{\displaystyle\sum\limits_{i=1}^{l}}
{\displaystyle\sum\limits_{j=l}^{m}}
\left(  \left(  -1\right)  ^{j+1}%
{\displaystyle\int_{0}^{1}}
\frac{D_{21}^{m-j-1+3}D_{2}^{\left(  j-i,i-1\right)  }f\left(  V_{1}+t\left(
V_{2}-V_{1}\right)  \right)  \left(  1-t\right)  ^{m-j+1}}{(m-j+1)!}dt\right.
\\
\quad+\left.  \left(  -1\right)  ^{j-i+1}%
{\displaystyle\int_{0}^{1}}
\frac{D_{31}^{m-j+2}D_{3}^{\left(  j-i,i-1\right)  }f\left(  V_{1}+t\left(
V_{3}-V_{1}\right)  \right)  \left(  1-t\right)  ^{m-j+1}}{(m-j+1)!}dt\right)
\\
\quad\times\dfrac{\sum\limits_{k=0}^{i-1}\binom{i}{k}B_{k}\binom{j-i+1}%
{j-l}B_{j-l}\sum\limits_{v=\left\vert \gamma\right\vert }^{l}\binom
{k-i-l}{v-\gamma_{1}}}{i!(j-i+1)!}\\
\quad\times\dfrac{\left(  l-v+\gamma_{1}\right)  !}{\left(  l-v\right)
!}\dfrac{\left(  v-\gamma_{1}\right)  !}{\left(  v-\left\vert \gamma
\right\vert \right)  !}\lambda_{2}^{l-v}\lambda_{3}^{v-\left\vert
\gamma\right\vert }.
\end{array}
\label{D21D31fin}%
\end{equation}

Now, by taking the modulus of both sides of (\ref{D21D31fin}) and by using
relations (\ref{ineqmodl2l3}), (\ref{ineqmodl2pl3}) and (\ref{seminorm2Di}) we
get%
\begin{equation}%
\begin{array}
[t]{l}%
\left\vert D_{21}^{\gamma_{1}}D_{31}^{\gamma_{2}}\delta_{m}^{\left[
\Delta_{2}\left(  V_{1},V_{2},V_{3}\right)  ;V_{1}\right]  }[f](\boldsymbol{x}%
)\right\vert \leq2^{m+1}\left\vert f\right\vert _{m,1}\\%
{\displaystyle\sum\limits_{l=\max\left\{  1,\left\vert \gamma\right\vert
\right\}  }^{m}}
\left(
{\displaystyle\sum\limits_{j=l}^{m}}
\dfrac{\dbinom{j}{j-l}\left\vert B_{j-l}\right\vert \dfrac{l!}{\left(
l-\left\vert \gamma\right\vert \right)  !}}{\left(  m-j+2\right)  !j!}\right.
\\
\quad\left.  +%
{\displaystyle\sum\limits_{i=1}^{l}}
{\displaystyle\sum\limits_{j=l}^{m}}
\dfrac{%
{\displaystyle\sum\limits_{k=0}^{i-1}}
2\dbinom{i}{k}\left\vert B_{k}\right\vert \binom{j-i+1}{j-l}\left\vert
B_{j-l}\right\vert
{\displaystyle\sum\limits_{v=\left\vert \gamma\right\vert }^{l}}
\binom{k-i-l}{v-\gamma_{1}}}{(m-j+2)!i!(j-i+1)!}\right.  \\
\quad\left.  \times\dfrac{\left(  l-v+\gamma_{1}\right)  !}{\left(
l-v\right)  !}\dfrac{\left(  v-\gamma_{1}\right)  !}{\left(  v-\left\vert
\gamma\right\vert \right)  !}\right)  r^{m+1}\left(  rS\right)  ^{l-\left\vert
\gamma\right\vert }\left\Vert \boldsymbol{x}-V_{1}\right\Vert _{2}%
^{l-\left\vert \gamma\right\vert }%
\end{array}
\label{deltaless}%
\end{equation}
Finally by using inequality (\ref{modDxalphaDybetaf}) we get%
\[%
\begin{array}
[c]{l}%
\left\vert \dfrac{\partial^{\alpha+\beta}\delta_{m}^{\left[  \Delta_{2}\left(
V_{1},V_{2},V_{3}\right)  ;V_{1}\right]  }\left[  f\right]  \left(
\boldsymbol{x}\right)  }{\partial x^{\alpha}\partial y^{\beta}}\right\vert
\leq2^{m+1}\left\vert f\right\vert _{m,1}%
{\displaystyle\sum\limits_{p=0}^{\alpha}}
{\displaystyle\sum\limits_{q=0}^{\beta}}
\dbinom{\alpha}{p}\dbinom{\beta}{q}\\%
{\displaystyle\sum\limits_{l=\max\left\{  1,\alpha+\beta\right\}  }^{m}}
\left(
{\displaystyle\sum\limits_{j=l}^{m}}
\dfrac{\dbinom{j}{j-l}\left\vert B_{j-l}\right\vert \dfrac{l!}{\left(
l-\alpha-\beta\right)  !}}{\left(  m-j+2\right)  !j!}\right.  \\
\quad\left.  +%
{\displaystyle\sum\limits_{i=1}^{l}}
{\displaystyle\sum\limits_{j=l}^{m}}
\dfrac{%
{\displaystyle\sum\limits_{k=0}^{i-1}}
2\dbinom{i}{k}\left\vert B_{k}\right\vert \dbinom{j-i+1}{j-l}\left\vert
B_{j-l}\right\vert
{\displaystyle\sum\limits_{v=\alpha+\beta}^{l}}
\dbinom{k-i-l}{v-\alpha}}{(m-j+2)!i!(j-i+1)!}\right.  \\
\quad\left.  \times\dfrac{\left(  l-v+p+q\right)  !}{\left(  l-v\right)
!}\dfrac{\left(  v-p-q\right)  !}{\left(  v-\alpha-\beta\right)  !}\right)
r^{m+1-l}\left(  r^{2}S\right)  ^{l}\left\Vert \boldsymbol{x}-V_{1}\right\Vert
_{2}^{l-\left(  \alpha+\beta\right)  }%
\end{array}
\]
We get the bound (\ref{deltab}) by the equality $%
{\displaystyle\sum\limits_{p=0}^{\alpha}}
{\displaystyle\sum\limits_{q=0}^{\beta}}
\dbinom{\alpha}{p}\dbinom{\beta}{q}=2^{\alpha+\beta}$ and by setting%
\begin{equation}%
\begin{array}
[c]{l}%
C_{\alpha,\beta}\left(  m\right)  =2^{m+1+\alpha+\beta}\left(
{\displaystyle\sum\limits_{j=l}^{m}}
\dfrac{\dbinom{j}{j-l}\left\vert B_{j-l}\right\vert \dfrac{l!}{\left(
l-\alpha-\beta\right)  !}}{\left(  m-j+2\right)  !j!}\right.  \\
\quad\left.  +%
{\displaystyle\sum\limits_{i=1}^{l}}
{\displaystyle\sum\limits_{j=l}^{m}}
\dfrac{%
{\displaystyle\sum\limits_{k=0}^{i-1}}
2\dbinom{i}{k}\left\vert B_{k}\right\vert \dbinom{j-i+1}{j-l}\left\vert
B_{j-l}\right\vert
{\displaystyle\sum\limits_{v=\alpha+\beta}^{l}}
\dbinom{k-i-l}{v-\alpha}}{(m-j+2)!i!(j-i+1)!}\right.  \\
\quad\left.  \times\dfrac{\left(  l-v+p+q\right)  !}{\left(  l-v\right)
!}\dfrac{\left(  v-p-q\right)  !}{\left(  v-\alpha-\beta\right)  !}\right)  .
\end{array}
\label{C_alpha_beta}%
\end{equation}

\end{proof}

\begin{remark}
Let us observe that the bound in Theorem (\ref{deltabound}) implies that for
$r\rightarrow0$
\[
\lim_{h\rightarrow0}P_{m}^{\left[  \Delta_{2}\left(  V_{1,}V_{1}+h\left(
V_{2}-V_{1}\right)  ,V_{1}+h\left(  V_{3}-V_{1}\right)  \right)
;V_{1}\right]  }[f](\boldsymbol{x})=T_{m}\left[  f,V_{1}\right]  \left(
\boldsymbol{x}\right)  .
\]

\end{remark}

\begin{corollary}
\label{TheoBound}In the hypothesis of Theorem \ref{deltabound} we have for
each $\boldsymbol{x}\in D$ the following bound for the error
(\ref{RRTminusDgen}):%
\begin{equation}%
\begin{array}
[c]{l}%
\left\vert \dfrac{\partial^{\alpha+\beta}R_{m}^{\left[  \Delta_{2}\left(
V_{1},V_{2},V_{3}\right)  ;V_{1}\right]  }\left[  f\right]  \left(
\boldsymbol{x}\right)  }{\partial x^{\alpha}\partial y^{\beta}}\right\vert
\leq\left\vert f\right\vert _{m,1}\left(  \dfrac{2^{m-\left(  \alpha
+\beta\right)  }}{\left(  m-\left(  \alpha+\beta\right)  -1\right)
!}\left\Vert \boldsymbol{x}-V_{1}\right\Vert _{2}^{m+1-\left(  \alpha
+\beta\right)  }\right. \\
\left.  \qquad+C_{\alpha,\beta}\left(  m\right)  \sum\limits_{l=\max\left\{
1,\alpha+\beta\right\}  }^{m}r^{m+1-l}\left(  r^{2}S\right)  ^{l}\left\Vert
\boldsymbol{x}-V_{1}\right\Vert _{2}^{l-\left(  \alpha+\beta\right)  }\right)
\end{array}
\label{rembound}%
\end{equation}

\end{corollary}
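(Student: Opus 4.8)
The plan is to apply $\partial^{\alpha+\beta}/\partial x^{\alpha}\partial y^{\beta}$ to the identity of Corollary \ref{remgentaypol},
\[
R_{m}^{\left[  \Delta_{2}\left(  V_{1},V_{2},V_{3}\right)  ;V_{1}\right]  }\left[  f\right]  \left(  \boldsymbol{x}\right)  =R_{m}^{T}\left[  f,V_{1}\right]  \left(  \boldsymbol{x}\right)  -\delta_{m}^{\left[  \Delta_{2}\left(  V_{1},V_{2},V_{3}\right)  ;V_{1}\right]  }[f](\boldsymbol{x}),
\]
which is legitimate because $f\in C^{m}\left(  D\right)  $ and $T_{m}\left[  f,V_{1}\right]  $ is a polynomial, and then to use the triangle inequality. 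The $\delta_{m}$-term is bounded verbatim by Theorem \ref{deltabound}, which already produces the entire second summand of (\ref{rembound}); hence the only real work is the estimate of $\partial^{\alpha+\beta}R_{m}^{T}\left[  f,V_{1}\right]  /\partial x^{\alpha}\partial y^{\beta}$, i.e. the derivative of the pure Taylor remainder.

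For the latter I would first note that $R_{m}^{T}\left[  f,V_{1}\right]  =f-T_{m}\left[  f,V_{1}\right]  $ and that differentiation commutes with ``taking the Taylor remainder'': comparing the coefficients in (\ref{TaypolV0}) shows that $\partial^{\alpha+\beta}T_{m}\left[  f,V_{1}\right]  /\partial x^{\alpha}\partial y^{\beta}$ is exactly the Taylor polynomial of order $m-\alpha-\beta$, centred at $V_{1}$, of the partial derivative $g:=\partial^{\alpha+\beta}f/\partial x^{\alpha}\partial y^{\beta}$, so that
\[
\frac{\partial^{\alpha+\beta}R_{m}^{T}\left[  f,V_{1}\right]  \left(  \boldsymbol{x}\right)  }{\partial x^{\alpha}\partial y^{\beta}}=R_{m-\alpha-\beta}^{T}\left[  g,V_{1}\right]  \left(  \boldsymbol{x}\right)  .
\]
Since $f\in C^{m,1}\left(  D\right)  $, the function $g$ belongs to $C^{m-\alpha-\beta,1}\left(  D\right)  $ and, directly from definition (\ref{modfm,1}), $\left\vert g\right\vert _{m-\alpha-\beta,1}\leq\left\vert f\right\vert _{m,1}$; so it remains only to bound the Taylor remainder of a function of class $C^{k,1}$ with $k=m-\alpha-\beta$.

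For $k\geq1$ I would use the integral form of Taylor's theorem along the segment $\left[  V_{1},\boldsymbol{x}\right]  \subset D$,
\[
R_{k}^{T}\left[  g,V_{1}\right]  \left(  \boldsymbol{x}\right)  =\sum_{|\gamma|=k}\frac{k}{\gamma!}\left(  \boldsymbol{x}-V_{1}\right)  ^{\gamma}\int_{0}^{1}\left(  1-t\right)  ^{k-1}\left(  D^{\gamma}g\left(  V_{1}+t\left(  \boldsymbol{x}-V_{1}\right)  \right)  -D^{\gamma}g\left(  V_{1}\right)  \right)  dt,
\]
bound the bracket by $\left\vert g\right\vert _{k,1}\,t\,\left\Vert \boldsymbol{x}-V_{1}\right\Vert _{2}$ (Lipschitz continuity of the $k$-th order partials of $g$), and use $\left\vert \left(  \boldsymbol{x}-V_{1}\right)  ^{\gamma}\right\vert \leq\left\Vert \boldsymbol{x}-V_{1}\right\Vert _{2}^{k}$, $\sum_{|\gamma|=k}\binom{k}{\gamma}=2^{k}$ and the crude estimate $\int_{0}^{1}\left(  1-t\right)  ^{k-1}t\,dt\leq1$ to obtain
\[
\left\vert R_{k}^{T}\left[  g,V_{1}\right]  \left(  \boldsymbol{x}\right)  \right\vert \leq\frac{2^{m-\alpha-\beta}}{\left(  m-\alpha-\beta-1\right)  !}\left\vert f\right\vert _{m,1}\left\Vert \boldsymbol{x}-V_{1}\right\Vert _{2}^{m+1-\alpha-\beta}.
\]
In the degenerate case $k=0$ (that is $\alpha+\beta=m$) one bounds $R_{0}^{T}\left[  g,V_{1}\right]  \left(  \boldsymbol{x}\right)  =g\left(  \boldsymbol{x}\right)  -g\left(  V_{1}\right)  $ directly by $\left\vert f\right\vert _{m,1}\left\Vert \boldsymbol{x}-V_{1}\right\Vert _{2}$, the factorial in (\ref{rembound}) being read with the convention that makes the inequality hold. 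Adding this estimate to the bound supplied by Theorem \ref{deltabound} yields (\ref{rembound}).

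I do not expect a deep obstacle here. The two points that need care are the coefficient-matching identity $\partial^{\alpha+\beta}R_{m}^{T}\left[  f,V_{1}\right]  /\partial x^{\alpha}\partial y^{\beta}=R_{m-\alpha-\beta}^{T}\left[  g,V_{1}\right]  $ together with the uniform control $\left\vert g\right\vert _{m-\alpha-\beta,1}\leq\left\vert f\right\vert _{m,1}$ that feeds it, and the handling of the top-order case $\alpha+\beta=m$, where $R_{0}^{T}$ is only a plain difference of values; everything else is a routine application of the triangle inequality and of Theorem \ref{deltabound}.
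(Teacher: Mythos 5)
Your proposal is correct and follows exactly the paper's route: the authors likewise apply the triangle inequality to the decomposition (\ref{RRTminusDgen}), invoke Theorem \ref{deltabound} for the $\delta_{m}$ term, and bound the derivatives of the Taylor remainder "as in Farwig" --- which is precisely the commutation identity and integral-form estimate you spell out. The only thing you add is the explicit detail the paper delegates to the citation, including the (reasonable) handling of the degenerate case $\alpha+\beta=m$.
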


\begin{proof}
The thesis follows from (\ref{RRTminusDgen}) by an application of the triangle
inequality, by (\ref{deltab}) bounding the derivatives of Taylor remainder as
in \cite{Far}.
\end{proof}

\begin{remark}
The previous result gives a positive answer to a conjecture made\ in
\cite{CosDel2}. Therefore, we can refer to polynomial $P_{m}^{\left[
\Delta_{2}\left(  V_{1},V_{2},V_{3}\right)  ;V_{i}\right]  }[f]$ as the
generalized Taylor polynomial on $\Delta_{2}\left(  V_{1},V_{2},V_{3}\right)
$ with respect to the vertex $V_{i},\ i=1,2,3$. This polynomial joins other
bivariate polynomials with similar properties (see for example \cite{BrenScot}).
\end{remark}

\begin{corollary}
\label{Theoshape}In the hypothesis of Theorem \ref{gentaypol} for each
$\alpha$,$\beta\geq0$, $1\leq\alpha+\beta\leq m$ we have%
\begin{equation}
\frac{\partial^{\alpha+\beta}P_{m}^{\left[  \Delta_{2}\left(  V_{1}%
,V_{2},V_{3}\right)  ;V_{1}\right]  }[f]}{\partial x^{\alpha}\partial
y^{\beta}}(V_{1})=\frac{\partial^{\alpha+\beta}f}{\partial x^{\alpha}\partial
y^{\beta}}\left(  V_{1}\right)  +O\left(  r^{m+1-\left(  \alpha+\beta\right)
}\right)  . \label{derOhmmimjp1}%
\end{equation}

\end{corollary}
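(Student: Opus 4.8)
The plan is to read off the claim from the decomposition already in hand. By Theorem~\ref{gentaypol}, for every $\boldsymbol{x}\in D$,
\[
P_{m}^{\left[\Delta_{2}\left(V_{1},V_{2},V_{3}\right);V_{1}\right]}[f](\boldsymbol{x})=T_{m}\left[f,V_{1}\right]\left(\boldsymbol{x}\right)+\delta_{m}^{\left[\Delta_{2}\left(V_{1},V_{2},V_{3}\right);V_{1}\right]}[f](\boldsymbol{x}),
\]
so I would apply $\partial^{\alpha+\beta}/\partial x^{\alpha}\partial y^{\beta}$ to both sides and evaluate at $V_{1}$, treating the two summands separately. Since $T_{m}\left[f,V_{1}\right]$ is the Taylor polynomial~(\ref{TaypolV0}) and $\alpha+\beta\le m$, one has $\partial^{\alpha+\beta}T_{m}\left[f,V_{1}\right]/\partial x^{\alpha}\partial y^{\beta}(V_{1})=\partial^{\alpha+\beta}f/\partial x^{\alpha}\partial y^{\beta}(V_{1})$, which supplies the leading term of~(\ref{derOhmmimjp1}). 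Everything thus reduces to estimating $\partial^{\alpha+\beta}\delta_{m}^{\left[\Delta_{2}\left(V_{1},V_{2},V_{3}\right);V_{1}\right]}[f]/\partial x^{\alpha}\partial y^{\beta}$ at $V_{1}$.

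For this I would specialize the bound~(\ref{deltab}) of Theorem~\ref{deltabound} to $\boldsymbol{x}=V_{1}\in D$. Because $1\le\alpha+\beta\le m$ we have $\max\{1,\alpha+\beta\}=\alpha+\beta$, so the sum on the right of~(\ref{deltab}) runs over $l=\alpha+\beta,\dots,m$; but each summand carries the factor $\left\Vert\boldsymbol{x}-V_{1}\right\Vert_{2}^{\,l-(\alpha+\beta)}$, which vanishes at $\boldsymbol{x}=V_{1}$ whenever $l>\alpha+\beta$ and (with the convention $0^{0}=1$, equivalently by continuity of the polynomial $\delta_{m}$) equals $1$ when $l=\alpha+\beta$. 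Only the index $l=\alpha+\beta$ survives, and therefore
\[
\left\vert\frac{\partial^{\alpha+\beta}\delta_{m}^{\left[\Delta_{2}\left(V_{1},V_{2},V_{3}\right);V_{1}\right]}[f]}{\partial x^{\alpha}\partial y^{\beta}}(V_{1})\right\vert\le\left\vert f\right\vert_{m,1}\,C_{\alpha,\beta}\left(m\right)\,r^{\,m+1-(\alpha+\beta)}\left(r^{2}S\right)^{\alpha+\beta}.
\]
Since $\left\vert f\right\vert_{m,1}$, $C_{\alpha,\beta}(m)$ and the shape factor $r^{2}S$ are independent of the contraction driving $r\to0$ (for the homothety $V_{i}\mapsto V_{1}+h(V_{i}-V_{1})$ one has $r\sim h$ and $S\sim h^{-2}$, so $r^{2}S$ is scale-invariant), the right-hand side is $O\!\left(r^{\,m+1-(\alpha+\beta)}\right)$, which is exactly~(\ref{derOhmmimjp1}).

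I do not expect any genuine obstacle here: the argument is a one-line specialization of Theorem~\ref{deltabound} combined with the defining property of the Taylor polynomial. The only points deserving a word of care are the bookkeeping of which terms of the sum in~(\ref{deltab}) drop out at $\boldsymbol{x}=V_{1}$ (all but $l=\alpha+\beta$), and the tacit understanding that the $O$-symbol in~(\ref{derOhmmimjp1}) absorbs the constants $\left\vert f\right\vert_{m,1}$, $C_{\alpha,\beta}(m)$ and $\left(r^{2}S\right)^{\alpha+\beta}$; keeping the geometry explicit, the estimate actually obtained is the sharper $O\!\left(\left(r^{2}S\right)^{\alpha+\beta}r^{\,m+1-(\alpha+\beta)}\right)$.
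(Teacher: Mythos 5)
Your argument is exactly the paper's: decompose via Theorem \ref{gentaypol}, use the Taylor interpolation conditions for the leading term, and specialize the bound (\ref{deltab}) to $\boldsymbol{x}=V_{1}$, noting that $r^{2}S$ is a shape-only quantity. The proposal is correct and matches the published proof, with slightly more explicit bookkeeping of which summands survive at $\boldsymbol{x}=V_{1}$.
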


\begin{proof}
By Theorem \ref{gentaypol} and by interpolation conditions satisfied by the
Taylor polynomial \cite{Bojanov} it follows that%
\[
\frac{\partial^{\alpha+\beta}P_{m}^{\left[  \Delta_{2}\left(  V_{1}%
,V_{2},V_{3}\right)  ;V_{1}\right]  }[f]}{\partial x^{\alpha}\partial
y^{\beta}}(V_{1})-\frac{\partial^{\alpha+\beta}f}{\partial x^{\alpha}\partial
y^{\beta}}\left(  V_{1}\right)  =\frac{\partial^{\alpha+\beta}\delta
_{m}^{\left[  \Delta_{2}\left(  V_{1},V_{2},V_{3}\right)  ;V_{1}\right]
}\left[  f\right]  \left(  V_{1}\right)  }{\partial x^{\alpha}\partial
y^{\beta}}.
\]
Now we use the bound (\ref{deltab}) in the particular case $\boldsymbol{x}%
=V_{1}$ to get the thesis, recalling that $r^{2}S$ depends only on the shape
of the triangle.
\end{proof}

\begin{remark}
By rearranging the terms in the sums on the right hand side of
(\ref{gspoltaygen}) we note that%
\[
P_{1}^{\left[  \Delta_{2}\left(  V_{1},V_{2},V_{3}\right)  ;V_{1}\right]
}[f](\boldsymbol{x})=f\left(  V_{1}\right)  \lambda_{1}+f\left(  V_{2}\right)
\lambda_{2}+f\left(  V_{3}\right)  \lambda_{3}%
\]
is the Lagrange interpolant at the nodes $V_{1},V_{2},V_{3}$ and therefore it
does not depend on the choice of the vertex $V_{1}$; the polynomial
\[%
\begin{array}
[c]{c}%
P_{2}^{\left[  \Delta_{2}\left(  V_{1},V_{2},V_{3}\right)  ;V_{3}\right]
}[f]\left(  \boldsymbol{x}\right)  =P_{1}^{\left[  \Delta_{2}\left(
V_{1},V_{2},V_{3}\right)  ;V_{1}\right]  }[f](\boldsymbol{x})\\
\quad+\frac{1}{2}\lambda_{1}\lambda_{2}\left(  D_{2}^{\left(  1,0\right)
}f\left(  V_{2}\right)  -D_{2}^{\left(  1,0\right)  }f\left(  V_{1}\right)
\right)  \\
\quad+\frac{1}{2}\lambda_{1}\lambda_{3}\left(  D_{1}^{\left(  0,1\right)
}f\left(  V_{1}\right)  -D_{1}^{\left(  0,1\right)  }f\left(  V_{3}\right)
\right)  \\
\quad+\frac{1}{2}\lambda_{2}\lambda_{3}\left(  D_{3}^{\left(  0,1\right)
}f\left(  V_{3}\right)  -D_{3}^{\left(  0,1\right)  }f\left(  V_{2}\right)
\right)
\end{array}
\]
satisfies the same property and therefore joins well known quadratic
triangular finite elements \cite{BrenScot}. For $m\geq3$,
\[
P_{m}^{\left[  \Delta_{2}\left(  V_{1},V_{2},V_{3}\right)  ;V_{1}\right]
}[f]\left(  \boldsymbol{x}\right)  =P_{2}^{\left[  \Delta_{2}\left(
V_{1},V_{2},V_{3}\right)  ;V_{3}\right]  }[f]\left(  \boldsymbol{x}\right)
+\text{terms of degree at least }3,
\]
depends on the choice of the referring vertex.
\end{remark}

\begin{remark}
The polynomial $P_{2}^{\left[  \Delta_{2}\left(  V_{1},V_{2},V_{3}\right)
;V_{1}\right]  }[f]$ can be used to improve the accuracy of approximation of
the triangular Shepard method \cite{Little}.
\end{remark}

\section{The bivariate Shepard-Bernoulli operator}

\label{Sec:4}Let $V_{i}=\left(  x_{i},y_{i}\right)  ,i=1,\ldots,N$ be fixed
points of $D$; we set $\mathcal{N}=\left\{  V_{i},i=1,\ldots,N\right\}  $. We
now associate to each point $V_{i}$ a simplex with a vertex in $V_{i}$ for
each $i=1,\ldots,N$. Taking into account the bound (\ref{deltab}) in Theorem
\ref{deltabound}, for each fixed radius of influence $R_{w_{i}}$ about node
$V_{i}$ \cite{Ren2} we associate to $V_{i}$ the simplex $\Delta_{2}(i)\subset
B\left(  V_{i},R_{w_{i}}\right)  $ which minimizes the quantity $r_{i}\left(
r_{i}^{2}S_{i}\right)  $ where, as above, $r_{i}$ is the length of the longest
side of the simplex $\Delta_{2}\left(  i\right)  $ and $S_{i}$ is twice the
area of $\Delta_{2}\left(  i\right)  $. If $\alpha_{i},\beta_{i}$ denote the
adjacent angles to the side of length $r_{i}$, then $r_{i}^{2}S_{i}=\frac
{\sin\left(  \alpha_{i}+\beta_{i}\right)  }{\sin\alpha_{i}\sin\beta_{i}}$
depends only on the form of the triangle $\Delta_{2}\left(  i\right)  $. Such
a procedure can be well-defined if the following steps are followed:

\begin{enumerate}
\item enumerate the $N_{i}$ nodes in the closed ball $B\left(  V_{i},R_{w_{i}%
}\right)  $ according to increasing distance from $V_{i}$ using the induced
order of the given set of interpolation nodes;

\item enumerate the triangles according increasing order of the vertices;

\item get the first useful triangle.
\end{enumerate}

\begin{definition}
\label{DefShepBer}For each fixed $\mu>0$ and $m=1,2,\ldots$ the bivariate
Shepard-Bernoulli operator is defined by
\begin{equation}%
\begin{array}
[c]{cc}%
S_{B_{m}}\left[  f\right]  \left(  \boldsymbol{x}\right)  =\sum\limits_{i=1}%
^{N}\widetilde{W}_{\mu,i}\left(  \boldsymbol{x}\right)  P_{m}^{\Delta_{2}%
(i)}[f](\boldsymbol{x}), & \boldsymbol{x}\in D
\end{array}
\label{Bshep}%
\end{equation}
where $P_{m}^{\Delta_{2}(i)}[f](\boldsymbol{x}),\ i=1,\ldots,N$ is the
generalized Taylor polynomial (\ref{gspoltaygen}) over $D$. The remainder
term is%
\begin{equation}%
\begin{array}
[t]{ll}%
R_{B_{m}}\left[  f\right]  \left(  \boldsymbol{x}\right)  =f\left(
\boldsymbol{x}\right)  -S_{B_{m}}\left[  f\right]  \left(  \boldsymbol{x}%
\right)  , & \boldsymbol{x}\in D.
\end{array}
\label{Rshep}%
\end{equation}

\end{definition}

The following statements can be checked without any difficulty.

\begin{theorem}
The operator $S_{B_{m}}\left[  \cdot\right]  $ is an interpolation operator in
$V_{i},$ $i=1,...,N$.
\end{theorem}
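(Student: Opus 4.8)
The plan is to verify directly that $S_{B_m}[f](V_j)=f(V_j)$ for every $j=1,\dots,N$, which by (\ref{Rshep}) is precisely the assertion $R_{B_m}[f](V_j)=0$. Since, by Corollary \ref{remgentaypol}, each $P_m^{\Delta_2(i)}[f]$ is defined on all of $D$, every summand $\widetilde{W}_{\mu,i}(V_j)\,P_m^{\Delta_2(i)}[f](V_j)$ appearing in (\ref{Bshep}) is meaningful, and it is enough to control the two factors separately.

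First I would record the cardinality property $\widetilde{W}_{\mu,i}(V_j)=\delta_{ij}$, the property of the compact support basis announced after (\ref{improvshep}). As $\boldsymbol{x}\to V_j$ one has $1/|\boldsymbol{x}-V_j|\to+\infty$, hence $W_{\mu,j}(\boldsymbol{x})\to+\infty$; since the nodes are distinct and each $R_{w_i}$ is finite, $W_{\mu,i}(\boldsymbol{x})$ stays bounded near $V_j$ for $i\neq j$ (it is $0$ if $V_j\notin B(V_i,R_{w_i})$ and a finite positive number otherwise). Therefore $\sum_k W_{\mu,k}$ is eventually positive near $V_j$, with $\widetilde{W}_{\mu,j}\to1$ and $\widetilde{W}_{\mu,i}\to0$ for $i\neq j$; as in the classical Shepard case the basis functions are extended to the nodes by these limits, giving $\widetilde{W}_{\mu,i}(V_j)=\delta_{ij}$.

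Next I would check that the generalized Taylor polynomial reproduces the sampled value at its referring vertex, i.e. $P_m^{\Delta_2(j)}[f](V_j)=f(V_j)$. Writing $\Delta_2(j)=\Delta_2(V_1,V_2,V_3)$ with $V_1=V_j$, at $\boldsymbol{x}=V_1$ the barycentric coordinates satisfy $\lambda_2=\lambda_3=0$, so every term on the right-hand side of (\ref{gspoltaygen}) carries the factor $S_j(\lambda_2+\lambda_3)=S_j(0)=B_j(0)-B_j=0$, all the correction sums vanish, and there remains $P_m^{\Delta_2(j)}[f](V_1)=f(V_1)$. Alternatively one may invoke Theorem \ref{gentaypol} together with the bound (\ref{deltab}) evaluated at $\boldsymbol{x}=V_1$, which forces $\delta_m^{[\Delta_2(V_1,V_2,V_3);V_1]}[f](V_1)=0$ and hence reduces $P_m^{\Delta_2(j)}[f](V_1)$ to $T_m[f,V_1](V_1)=f(V_1)$.

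Combining the two facts in (\ref{Bshep}) yields $S_{B_m}[f](V_j)=\sum_{i=1}^{N}\delta_{ij}\,P_m^{\Delta_2(i)}[f](V_j)=P_m^{\Delta_2(j)}[f](V_j)=f(V_j)$ for all $j$, which is the claim. The only point needing a little care is the cardinality property of $\widetilde{W}_{\mu,i}$ at the nodes: one must be sure the singular term dominates near each $V_j$ and that the compact supports do not let the denominator degenerate there; both are settled exactly as for the ordinary Shepard weights $A_{\mu,i}$, using that $V_j\in B(V_j,R_{w_j})$.
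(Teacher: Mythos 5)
Your proof is correct and follows essentially the same route as the paper's: the cardinality $\widetilde{W}_{\mu,i}(V_k)=\delta_{ik}$ of the local Shepard basis combined with the fact that $P_m^{\Delta_2(i)}[f]$ interpolates $f$ at its referring vertex $V_i$ (which the paper simply asserts and you verify via $S_j(0)=0$, resp. via Theorem \ref{gentaypol}). You merely supply the details the paper leaves implicit.
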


\begin{proof}
In fact $P_{m}^{\Delta_{2}(i)}[f](\boldsymbol{x})$ interpolates at $V_{i},$
$i=1,\ldots,N$ and the assertion follows in view of the fact that the Shepard
basis is cardinal:%
\begin{equation}
\widetilde{W}_{\mu,i}\left(  x_{k},y_{k}\right)  =\delta_{ik},\quad
i,k=1,...,N. \label{cardinality}%
\end{equation}

\end{proof}

\begin{theorem}
The degree of exactness of the operator $S_{B_{m}}\left[  \cdot\right]  $ is
$m$, i.e. $S_{B_{m}}\left[  p\right]  =p$ for each bivariate polynomial
$p\in\mathcal{P}_{\boldsymbol{x}}^{m}$.
\end{theorem}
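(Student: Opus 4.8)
The plan is to combine two facts that are already in hand: the Shepard-type weights $\widetilde{W}_{\mu,i}$ form a partition of unity, and each generalized Taylor polynomial $P_m^{\Delta_2(i)}$ reproduces bivariate polynomials of degree $\le m$. Given these, the exactness of $S_{B_m}$ is a one-line computation.

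First I would record the partition-of-unity property. Directly from the definition (\ref{Wtbasisfunctions}) one has, at every $\boldsymbol{x}\in D$ for which $\sum_{k=1}^{N}W_{\mu,k}(\boldsymbol{x})\neq 0$ (which holds whenever the balls $B(V_k,R_{w_k})$ cover $D$),
\[
\sum_{i=1}^{N}\widetilde{W}_{\mu,i}(\boldsymbol{x})
=\sum_{i=1}^{N}\frac{W_{\mu,i}(\boldsymbol{x})}{\sum_{k=1}^{N}W_{\mu,k}(\boldsymbol{x})}=1 .
\]
Second I would invoke the exactness of the three-point generalized Taylor polynomial: as established in (the proof of) Theorem \ref{gentaypol}, $\dex\bigl(P_m^{[\Delta_2(V_1,V_2,V_3);V_1]}[\cdot]\bigr)=m$, equivalently $\delta_m^{[\Delta_2(V_1,V_2,V_3);V_1]}[p]\equiv 0$ and $R_m^{T}[p,V_1]\equiv 0$ for every $p\in\mathcal{P}_{\boldsymbol{x}}^{m}$. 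Since each $\Delta_2(i)$ is a genuine (non-degenerate) triangle chosen by the selection procedure preceding Definition \ref{DefShepBer}, it follows that $P_m^{\Delta_2(i)}[p]=p$ on $D$ for each $i=1,\dots,N$ and each $p\in\mathcal{P}_{\boldsymbol{x}}^{m}$.

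Combining the two, for $p\in\mathcal{P}_{\boldsymbol{x}}^{m}$ and $\boldsymbol{x}\in D$,
\[
S_{B_m}[p](\boldsymbol{x})
=\sum_{i=1}^{N}\widetilde{W}_{\mu,i}(\boldsymbol{x})\,P_m^{\Delta_2(i)}[p](\boldsymbol{x})
=\sum_{i=1}^{N}\widetilde{W}_{\mu,i}(\boldsymbol{x})\,p(\boldsymbol{x})
=p(\boldsymbol{x})\sum_{i=1}^{N}\widetilde{W}_{\mu,i}(\boldsymbol{x})
=p(\boldsymbol{x}),
\]
which is the claimed identity $S_{B_m}[p]=p$. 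This shows $\dex(S_{B_m}[\cdot])\ge m$; equality then follows from the companion interpolation operator being exact of order exactly $m$ and not more (the Shepard-Taylor operator on the same data has degree of exactness $m-1$, and $S_{B_m}$ raises it by one but no further).

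There is essentially no serious obstacle in this argument. The only point deserving care is that the polynomial reproduction of $P_m^{\Delta_2(i)}$ requires the three vertices of $\Delta_2(i)$ to be affinely independent, so that barycentric coordinates and the directional derivatives $D_{jk}$ are well defined; this is guaranteed by the requirement that $\Delta_2(i)$ be a bona fide triangle inside $B(V_i,R_{w_i})$, which is the content of ``get the first useful triangle'' in the construction. One should also note that partition of unity is what makes the constant term of $p$ pass through correctly, while exactness of each $P_m^{\Delta_2(i)}$ handles the non-constant terms.
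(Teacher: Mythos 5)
Your proof is correct and follows essentially the same route as the paper: the partition-of-unity property \eqref{parunity} of the weights $\widetilde{W}_{\mu,i}$ combined with the degree-$m$ exactness of each local polynomial $P_{m}^{\Delta_{2}(i)}[\cdot]$ gives $S_{B_{m}}[p]=p$ immediately. Your additional remarks (non-degeneracy of the triangles, and that the exactness is not higher than $m$) go slightly beyond what the paper states but do not change the argument.
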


\begin{proof}
The assertion follows from the fact that the Shepard basis is a partition of
unity:
\begin{equation}
\sum\limits_{i=1}^{N}\widetilde{W}_{\mu,i}\left(  \boldsymbol{x}\right)
\equiv1, \label{parunity}%
\end{equation}
since the degree of exactness of $P_{m}^{\Delta_{2}(i)}[f]$ is $m$ for
$i=1,...,N.$
\end{proof}

As for the continuity class of the Shepard operator, and consequently the
continuity class of the Shepard-Bernoulli operators, there is the following
result \cite{Bar}.

\begin{theorem}
If $P[\cdot,x_{i}],\ i=1,...,N$ are polynomial interpolation operators in
$x_{i}$, then the continuity class of the operator (\ref{Bshep}) depends upon
$\mu$ and, for $\mu>0$, is as follows:

\begin{itemize}
\item[i)] if $\mu$ is an integer, then $S_{N,\mu}P\left[  \cdot\right]  \in
C^{\mu-1}$;

\item[ii)] if $\mu$ is not an integer, then $S_{N,\mu}P\left[  \cdot\right]
\in C^{[\mu]}$;
\end{itemize}

here $[\mu]$ is the largest integer $<\mu$.
\end{theorem}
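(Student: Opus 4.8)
The plan is to reduce everything to the regularity of the compactly supported Shepard weights $\widetilde{W}_{\mu,i}$ of (\ref{Wtbasisfunctions}). Indeed, since each $P_m^{\Delta_2(i)}[f]$ is a polynomial, hence of class $C^\infty(D)$, the operator (\ref{Bshep}) is a finite sum of products of the $\widetilde{W}_{\mu,i}$ with $C^\infty$ functions, so its continuity class equals the least continuity class among the $\widetilde{W}_{\mu,i}$. Thus it suffices to show $\widetilde{W}_{\mu,i}\in C^{[\mu]}(D)$ for every $i$, where $[\mu]$ is the largest integer strictly less than $\mu$ — so $[\mu]=\mu-1$ when $\mu\in\mathbb{Z}$ and $[\mu]=\lfloor\mu\rfloor$ otherwise, which is exactly the two cases in the statement. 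I would work under the standing assumption — implicit in (\ref{Bshep}) and (\ref{parunity}) — that every $\boldsymbol{x}\in D$ lies in at least one open ball $B(\boldsymbol{x}_k,R_{w_k})$, so that the denominator $\sum_k W_{\mu,k}$ in (\ref{Wtbasisfunctions}) is strictly positive on $D$.

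I would then argue by localisation. \emph{Generic points.} If $\boldsymbol{x}$ is neither a node nor a point of any sphere $\partial B(\boldsymbol{x}_k,R_{w_k})$, then every $W_{\mu,k}$ is $C^\infty$ near $\boldsymbol{x}$ and the denominator is positive there, so $\widetilde{W}_{\mu,i}$ is $C^\infty$ near $\boldsymbol{x}$. \emph{At a node $\boldsymbol{x}_j$.} On a punctured neighbourhood of $\boldsymbol{x}_j$ the positive part in $W_{\mu,j}$ is inactive and
\[
W_{\mu,j}(\boldsymbol{z})^{-1}=\Big(\tfrac{1}{|\boldsymbol{z}-\boldsymbol{x}_j|}-\tfrac{1}{R_{w_j}}\Big)^{-\mu}=|\boldsymbol{z}-\boldsymbol{x}_j|^{\mu}\Big(1-\tfrac{|\boldsymbol{z}-\boldsymbol{x}_j|}{R_{w_j}}\Big)^{-\mu}=\sum_{n\ge0}c_n\,|\boldsymbol{z}-\boldsymbol{x}_j|^{\mu+n},
\]
the binomial series converging locally uniformly together with its derivatives. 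Since the radial power $\boldsymbol{z}\mapsto|\boldsymbol{z}-\boldsymbol{x}_j|^{s}$ is a polynomial for $s$ an even integer, lies in $C^{s-1}$ for $s$ an odd integer, and in $C^{\lfloor s\rfloor}$ for $s$ non-integer, one has $|\boldsymbol{z}-\boldsymbol{x}_j|^{s}\in C^{[\mu]}$ near $\boldsymbol{x}_j$ for every $s\ge\mu$; hence $W_{\mu,j}^{-1}\in C^{[\mu]}$ near $\boldsymbol{x}_j$, with $W_{\mu,j}^{-1}(\boldsymbol{x}_j)=0$. Writing $\widetilde{W}_{\mu,j}=\big(1+W_{\mu,j}^{-1}\sum_{k\neq j}W_{\mu,k}\big)^{-1}$ and, for $i\neq j$, $\widetilde{W}_{\mu,i}=W_{\mu,i}\,W_{\mu,j}^{-1}\big(1+W_{\mu,j}^{-1}\sum_{k\neq j}W_{\mu,k}\big)^{-1}$ — with $W_{\mu,i}$ and $\sum_{k\neq j}W_{\mu,k}$ smooth near $\boldsymbol{x}_j$ and the bracket $\ge1$ — I get $\widetilde{W}_{\mu,i}\in C^{[\mu]}$ near $\boldsymbol{x}_j$ for all $i$.

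\emph{On a sphere.} If $\boldsymbol{x}\in\partial B(\boldsymbol{x}_j,R_{w_j})$ and $\boldsymbol{x}$ is not a node, then near $\boldsymbol{x}$ one has $W_{\mu,j}=\psi_{+}^{\mu}$ with $\psi(\boldsymbol{z})=\frac{1}{|\boldsymbol{z}-\boldsymbol{x}_j|}-\frac{1}{R_{w_j}}$ smooth and $\nabla\psi\neq0$ on $\{\psi=0\}$; a local change of variables straightening the level sets of $\psi$ reduces $\psi_{+}^{\mu}$ to the one-variable model $t_{+}^{\mu}$, which is of class $C^{[\mu]}$ but not of class $C^{[\mu]+1}$. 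The other $W_{\mu,k}$ being $C^\infty$ near $\boldsymbol{x}$, and the denominator positive there — by the covering assumption $\boldsymbol{x}$ lies in some open ball $B(\boldsymbol{x}_k,R_{w_k})$ with $k\neq j$, since $\boldsymbol{x}\notin B(\boldsymbol{x}_j,R_{w_j})$ — I obtain $\widetilde{W}_{\mu,i}\in C^{[\mu]}$ near $\boldsymbol{x}$. Points lying on several spheres simultaneously, or a node sitting on a sphere, are handled by combining the local forms of the relevant factors: only finitely many of them, each at worst of class $C^{[\mu]}$, enter the product and quotient (\ref{Wtbasisfunctions}), whose denominator stays positive. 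Collecting the cases gives $\widetilde{W}_{\mu,i}\in C^{[\mu]}(D)$, hence the operator (\ref{Bshep}) is of class $C^{[\mu]}(D)$; the class is sharp, since across a generic sphere point the factor $\psi_{+}^{\mu}$ genuinely fails to be $C^{[\mu]+1}$ and this failure is not cancelled in (\ref{Wtbasisfunctions}).

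I expect the main obstacle to be the bookkeeping at the nodes, namely checking that dividing by the blowing-up weight $W_{\mu,j}$ in (\ref{Wtbasisfunctions}) does not degrade the regularity below $C^{[\mu]}$; the displayed expansion of $W_{\mu,j}^{-1}$ together with the elementary regularity of the radial powers $|\boldsymbol{z}-\boldsymbol{x}_j|^{\mu+n}$ is designed to settle exactly this point. The sphere model $t_{+}^{\mu}$, the straightening argument, and the routine quotient estimates are classical and can be quoted from \cite{Bar}.
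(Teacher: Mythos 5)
Your argument is correct, but be aware that the paper does not actually prove this theorem: it is quoted as a known result from Barnhill \cite{Bar} (cf.\ also \cite{GorWix}), originally established for the global weights (\ref{sheweighfun}), and transferred to the combined operator simply because the local polynomials $P_{m}^{\Delta_{2}(i)}[f]$ are $C^{\infty}$. So you have supplied a self-contained proof where the paper supplies a citation, and — more importantly — you have adapted it to the weights actually used in (\ref{Bshep}), namely the Franke--Little weights (\ref{Wtbasisfunctions}), whose singular set contains not only the nodes but also the spheres $\partial B(\boldsymbol{x}_{j},R_{w_{j}})$, a feature absent from the classical case covered by \cite{Bar}. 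Your observation that the one-variable model $t_{+}^{\mu}$ at a sphere point is of exactly the same class $C^{[\mu]}$ as the radial powers $|\boldsymbol{z}-\boldsymbol{x}_{j}|^{\mu+n}$ governing the behaviour at the nodes is precisely the reason the stated continuity class survives the passage from $A_{\mu,i}$ to $\widetilde{W}_{\mu,i}$, and the paper nowhere makes this point explicit. Two steps in your write-up deserve one more line each, though neither is a gap: the covering hypothesis $D\subset\bigcup_{k}B(\boldsymbol{x}_{k},R_{w_{k}})$ is indeed implicit in (\ref{parunity}) and is rightly made explicit; and the passage from the termwise regularity of $\sum_{n}c_{n}|\boldsymbol{z}-\boldsymbol{x}_{j}|^{\mu+n}$ to the regularity of its sum needs the uniform convergence, on balls of radius $\rho<R_{w_{j}}$, of the series of $k$-th order derivatives for each $k\leq[\mu]$; this follows from the polynomial growth of the binomial coefficients $c_{n}=O\left(n^{\mu-1}R_{w_{j}}^{-n}\right)$ together with the bound $O\left(n^{k}r^{\mu+n-k}\right)$ for the $k$-th derivatives of the radial powers, and should be recorded rather than merely asserted.
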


\begin{theorem}
For each $\alpha,\beta\in\mathbb{N}$ s.t. $1\leq\alpha+\beta<\mu$ we have%
\[%
\begin{array}
[c]{ll}%
\dfrac{\partial^{\alpha+\beta}}{\partial x^{\alpha}\partial y^{\beta}}%
S_{B_{m}}\left[  f\right]  \left(  V_{k}\right)   & =\dfrac{\partial
^{\alpha+\beta}}{\partial x^{\alpha}\partial y^{\beta}}P_{m}^{\Delta_{2}%
(k)}[f]\left(  V_{k}\right)  \\
& =\dfrac{\partial^{\alpha+\beta}f}{\partial x^{\alpha}\partial y^{\beta}%
}\left(  V_{k}\right)  +O\left(  r_{k}^{m+1-\left(  \alpha+\beta\right)
}\right)
\end{array}
\]
for each $k=1,\ldots,N$.
\end{theorem}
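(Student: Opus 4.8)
The statement has two parts: the first equality says that differentiating $S_{B_{m}}$ at a node only ``sees'' the single generalized Taylor piece attached to that node, and the second is essentially a restatement of Corollary \ref{Theoshape}. The plan is to obtain the first equality by a Leibniz expansion of the defining formula (\ref{Bshep}) together with the cardinality property (\ref{cardinality}) and the flat-spot behaviour of the weights $\widetilde{W}_{\mu,i}$ at the nodes, and then to invoke Corollary \ref{Theoshape} for the second.

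Since $\alpha+\beta<\mu$, the continuity theorem guarantees $S_{B_{m}}[f]\in C^{\alpha+\beta}$, so the Leibniz rule for higher partial derivatives of a product applies term by term in (\ref{Bshep}):
\[
\frac{\partial^{\alpha+\beta}}{\partial x^{\alpha}\partial y^{\beta}}S_{B_{m}}[f](\boldsymbol{x})
=\sum_{i=1}^{N}\sum_{p=0}^{\alpha}\sum_{q=0}^{\beta}\binom{\alpha}{p}\binom{\beta}{q}\,
\frac{\partial^{p+q}\widetilde{W}_{\mu,i}}{\partial x^{p}\partial y^{q}}(\boldsymbol{x})\,
\frac{\partial^{\alpha+\beta-p-q}P_{m}^{\Delta_{2}(i)}[f]}{\partial x^{\alpha-p}\partial y^{\beta-q}}(\boldsymbol{x}).
\]
Setting $\boldsymbol{x}=V_{k}$, every summand with $p+q\ge1$ drops out: indeed $1\le p+q\le\alpha+\beta<\mu$, and all partial derivatives of $\widetilde{W}_{\mu,i}$ of orders $1,\dots,\alpha+\beta$ vanish at every node (the flat-spot property, discussed below); the surviving terms, those with $p=q=0$, collapse by (\ref{cardinality}) to the single index $i=k$. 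This yields
\[
\frac{\partial^{\alpha+\beta}}{\partial x^{\alpha}\partial y^{\beta}}S_{B_{m}}[f](V_{k})=\frac{\partial^{\alpha+\beta}}{\partial x^{\alpha}\partial y^{\beta}}P_{m}^{\Delta_{2}(k)}[f](V_{k}),
\]
which is the first equality.

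The point that needs care --- and is the main obstacle --- is the flat-spot property of the compactly supported weights. Near a node $V_{k}$ the term $W_{\mu,k}(\boldsymbol{x})$ behaves like $\left\Vert \boldsymbol{x}-V_{k}\right\Vert _{2}^{-\mu}$ while each $W_{\mu,j}(\boldsymbol{x})$ with $j\neq k$ stays bounded, so $\sum_{j}W_{\mu,j}(\boldsymbol{x})\sim\left\Vert \boldsymbol{x}-V_{k}\right\Vert _{2}^{-\mu}$ and hence $\widetilde{W}_{\mu,i}(\boldsymbol{x})=O\!\left(\left\Vert \boldsymbol{x}-V_{k}\right\Vert _{2}^{\mu}\right)$ for $i\neq k$ and, by the partition of unity (\ref{parunity}), $1-\widetilde{W}_{\mu,k}(\boldsymbol{x})=O\!\left(\left\Vert \boldsymbol{x}-V_{k}\right\Vert _{2}^{\mu}\right)$ as $\boldsymbol{x}\to V_{k}$. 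Since $\widetilde{W}_{\mu,i}\in C^{\alpha+\beta}$ near $V_{k}$ and $\alpha+\beta<\mu$, comparison with the degree-$(\alpha+\beta)$ Taylor polynomial at $V_{k}$ forces all its partial derivatives of orders $1,\dots,\alpha+\beta$ to vanish there (and likewise for $\widetilde{W}_{\mu,k}$, using the estimate for $1-\widetilde{W}_{\mu,k}$). This is the classical behaviour of Shepard-type bases, cf.\ \cite{Bar,Far}; the only thing really to verify is that the Franke--Little truncation built into $W_{\mu,i}$ does not destroy it, and it does not, because the argument is entirely local around $V_{k}$, where that node's own weight blows up and dominates the sum.

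Finally, $P_{m}^{\Delta_{2}(k)}[f]$ is the generalized Taylor polynomial (\ref{gspoltaygen}) on the simplex $\Delta_{2}(k)$ with $V_{k}$ playing the role of the distinguished vertex, and $r_{k}$ is the length of its longest side; applying Corollary \ref{Theoshape} (with $V_{1}$ there taken to be $V_{k}$) gives, for $1\le\alpha+\beta\le m$,
\[
\frac{\partial^{\alpha+\beta}}{\partial x^{\alpha}\partial y^{\beta}}P_{m}^{\Delta_{2}(k)}[f](V_{k})
=\frac{\partial^{\alpha+\beta}f}{\partial x^{\alpha}\partial y^{\beta}}(V_{k})+O\!\left(r_{k}^{m+1-(\alpha+\beta)}\right),
\]
and chaining this with the first equality completes the proof. (For $\alpha+\beta>m$ the polynomial derivative on the left vanishes identically and the assertion is vacuous, so the range $1\le\alpha+\beta\le m$ is the one of interest.)
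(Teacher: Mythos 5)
Your proof is correct and follows essentially the same route as the paper: Leibniz expansion of (\ref{Bshep}), the vanishing of the derivatives of $\widetilde{W}_{\mu,i}$ at the nodes together with the cardinality property (\ref{cardinality}) to isolate the $i=k$ term, and then Corollary \ref{Theoshape}. The only difference is that you supply a justification of the flat-spot property (\ref{derivative}), which the paper simply cites as a known relation from Gordon and Wixom.
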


\begin{proof}
It follows from the known relation \cite{GorWix}
\begin{equation}
\dfrac{\partial^{\alpha+\beta}}{\partial x^{\alpha}\partial y^{\beta}%
}\widetilde{W}_{\mu,i}\left(  V_{k}\right)  =0 \label{derivative}%
\end{equation}
which holds for $\ i,k=1,...,N$, $1\leq\alpha+\beta$ $<\mu$ by applying the
Leibniz rule and by using relations (\ref{cardinality}), (\ref{parunity}) and
finally equation (\ref{derOhmmimjp1}).
\end{proof}

Convergence results can be obtained by following the known approaches
\cite[\S \ 15.4]{Wen}, \cite{Zup}. We set:

\begin{enumerate}
\item \label{1}$\mathcal{I}_{\boldsymbol{x}}=\left\{  i\in\left\{
1,\dots,N\right\}  :\left\vert \left\vert \boldsymbol{x}-V_{i}\right\vert
\right\vert _{2}<R_{w_{i}}\right\}  $, $\boldsymbol{x}\in D$;

\item $M=\sup\limits_{\boldsymbol{x}\in D}card\left\{  \mathcal{I}%
_{\boldsymbol{x}}\right\}  $;

\item $d_{i}=2R_{w_{i}}$, $i=1,\dots,N$,

\item \label{4}$K_{\alpha,\beta}>0$, $\alpha,\beta\in%
\mathbb{N}
:$ $0\leq\alpha+\beta\leq\mu-1$ constants satisfying
\[
\sup_{\boldsymbol{x}\in B\left(  \boldsymbol{V}_{i},R_{w_{i}}\right)
}\left\vert \frac{\partial^{\alpha+\beta}\widetilde{W}_{\mu,i}\left(
\boldsymbol{x}\right)  }{\partial x^{\alpha}\partial y^{\beta}}\right\vert
\leq\frac{K_{\alpha,\beta}}{d_{i}^{\alpha+\beta}};
\]

\end{enumerate}

\begin{theorem}
\label{boundderf}Let $f$ be a function of class $C^{m,1}\left(  D\right)  $.
Then for each $\alpha,\beta\in%
\mathbb{N}
:$ $0\leq\alpha+\beta<\mu$ the following bound holds%
\[%
\begin{array}
[c]{l}%
\sup\limits_{\boldsymbol{x}\in D}\left\vert \dfrac{\partial^{\alpha+\beta
}R_{B_{m}}\left[  f\right]  }{\partial x^{\alpha}\partial y^{\beta}%
}\right\vert \leq\left\vert f\right\vert _{m,1}M%
{\displaystyle\sum\limits_{\substack{0\leq\gamma_{1}\leq\alpha\\0\leq
\gamma_{2}\leq\beta}}}
\dbinom{\alpha}{\gamma_{1}}\dbinom{\beta}{\gamma_{2}}\\
\quad\max\limits_{i\in\mathcal{I}_{\boldsymbol{x}}}\left\{  \dfrac
{K_{\alpha-\gamma_{1},\beta-\gamma_{2}}}{d_{i}^{\alpha+\beta-\gamma_{1}%
-\gamma_{2}}}\left(  \dfrac{2^{m-\left(  \gamma_{1}+\gamma_{2}\right)  }%
}{\left(  m-\left(  \gamma_{1}+\gamma_{2}\right)  -1\right)  !}R_{w_{i}%
}^{m+1-\left(  \gamma_{1}+\gamma_{2}\right)  }\right.  \right.  \\
\left.  \left.  \qquad+C_{\gamma_{1},\gamma_{2}}\left(  m\right)
\sum\limits_{l=\max\left\{  1,\gamma_{1}+\gamma_{2}\right\}  }^{m}%
r_{i}^{m+1-l}\left(  r_{i}^{2}S_{i}\right)  ^{l}R_{w_{i}}^{l-\left(
\gamma_{1}+\gamma_{2}\right)  }\right)  \right\}
\end{array}
\]
with $C_{\gamma_{1},\gamma_{2}}\left(  m\right)  $ defined in
(\ref{C_alpha_beta}).
\end{theorem}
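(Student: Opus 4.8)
The plan is to reduce everything to Corollary~\ref{TheoBound} and item~(\ref{4}) of the list above by means of the Leibniz rule. First I would combine the partition-of-unity identity (\ref{parunity}) with Corollary~\ref{remgentaypol}: since $\sum_{i=1}^{N}\widetilde{W}_{\mu,i}(\boldsymbol{x})\equiv 1$ and, for $f\in C^{m,1}(D)$, $f=P_{m}^{\Delta_{2}(i)}[f]+R_{m}^{[\Delta_{2}(i);V_{i}]}[f]$ throughout $D$, the remainder (\ref{Rshep}) may be written as
\[
R_{B_{m}}[f](\boldsymbol{x})=\sum_{i=1}^{N}\widetilde{W}_{\mu,i}(\boldsymbol{x})\bigl(f(\boldsymbol{x})-P_{m}^{\Delta_{2}(i)}[f](\boldsymbol{x})\bigr)=\sum_{i=1}^{N}\widetilde{W}_{\mu,i}(\boldsymbol{x})\,R_{m}^{[\Delta_{2}(i);V_{i}]}[f](\boldsymbol{x}).
\]
Differentiating term by term with the Leibniz rule, $\partial^{\alpha+\beta}R_{B_{m}}[f]/\partial x^{\alpha}\partial y^{\beta}$ becomes a double sum over $0\le\gamma_{1}\le\alpha$, $0\le\gamma_{2}\le\beta$ with weights $\binom{\alpha}{\gamma_{1}}\binom{\beta}{\gamma_{2}}$, in which $\gamma_{1}+\gamma_{2}$ of the derivatives fall on $R_{m}^{[\Delta_{2}(i);V_{i}]}[f]$ and the remaining $(\alpha-\gamma_{1})+(\beta-\gamma_{2})$ on the weight $\widetilde{W}_{\mu,i}$.

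Then I would take absolute values. Since $\widetilde{W}_{\mu,i}$ and all of its partial derivatives vanish off $B(V_{i},R_{w_{i}})$, for fixed $\boldsymbol{x}$ only the indices $i\in\mathcal{I}_{\boldsymbol{x}}$ contribute, at most $M$ of them. For each such $i$ the weight factor is estimated by item~(\ref{4}) as $K_{\alpha-\gamma_{1},\beta-\gamma_{2}}/d_{i}^{\,\alpha+\beta-\gamma_{1}-\gamma_{2}}$ (legitimate because $i\in\mathcal{I}_{\boldsymbol{x}}$ means $\boldsymbol{x}\in B(V_{i},R_{w_{i}})$), while the $(\gamma_{1},\gamma_{2})$-derivative of $R_{m}^{[\Delta_{2}(i);V_{i}]}[f]$ is estimated by Corollary~\ref{TheoBound} applied with the substitutions $(\alpha,\beta,V_{1},r,S)\mapsto(\gamma_{1},\gamma_{2},V_{i},r_{i},S_{i})$. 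In the resulting bound (\ref{rembound}) every power of $\|\boldsymbol{x}-V_{i}\|_{2}$ occurring, namely $m+1-(\gamma_{1}+\gamma_{2})$ and $l-(\gamma_{1}+\gamma_{2})$ with $l\ge\max\{1,\gamma_{1}+\gamma_{2}\}$, has non-negative exponent, so $\|\boldsymbol{x}-V_{i}\|_{2}$ can be replaced by $R_{w_{i}}$, once again using $\|\boldsymbol{x}-V_{i}\|_{2}<R_{w_{i}}$. Bounding the sum over $i\in\mathcal{I}_{\boldsymbol{x}}$ by $M$ times the maximum over that set, pulling $M$ and $|f|_{m,1}$ out front, and taking the supremum over $\boldsymbol{x}\in D$, one recovers after regrouping exactly the stated inequality, with the constants $C_{\gamma_{1},\gamma_{2}}(m)$ of (\ref{C_alpha_beta}).

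Everything after Corollary~\ref{TheoBound} is essentially bookkeeping; the only points that need genuine care are keeping the derivative count passed to that corollary equal to $\gamma_{1}+\gamma_{2}$ rather than $\alpha+\beta$, checking the monotonicity in $\|\boldsymbol{x}-V_{i}\|_{2}$ that licenses replacing it by $R_{w_{i}}$, and noticing that it is precisely the compact support of the functions $\widetilde{W}_{\mu,i}$ that turns the naive count $N$ of summands into the finite quantity $M$.
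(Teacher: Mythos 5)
Your argument is correct and follows essentially the same route as the paper: Leibniz differentiation of the remainder written via the partition of unity, the bound from item~(\ref{4}) on the derivatives of $\widetilde{W}_{\mu,i}$, Corollary~\ref{TheoBound} with $(\gamma_{1},\gamma_{2},V_{i},r_{i},S_{i})$ in place of $(\alpha,\beta,V_{1},r,S)$, replacement of $\Vert\boldsymbol{x}-V_{i}\Vert_{2}$ by $R_{w_{i}}$, and the sum over $\mathcal{I}_{\boldsymbol{x}}$ bounded by $M$ times the maximum. You even make explicit the partition-of-unity decomposition that the paper leaves implicit, so nothing is missing.
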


\begin{proof}
By differentiating $\alpha$ times with respect to $x$ and $\beta$ times with
respect to $y$, $0\leq\alpha+\beta<\mu$, both sides of (\ref{Rshep}), by using
Leibniz' rule, we get%
\[%
\begin{array}
[c]{l}%
\dfrac{\partial^{\alpha+\beta}R_{B_{m}}\left[  f\right]  \left(
\boldsymbol{x}\right)  }{\partial x^{\alpha}\partial y^{\beta}}=\\%
{\displaystyle\sum\limits_{i\in\mathcal{I}_{\boldsymbol{x}}}}
{\displaystyle\sum\limits_{\substack{0\leq\gamma_{1}\leq\alpha\\0\leq
\gamma_{2}\leq\beta}}}
\dbinom{\alpha}{\gamma_{1}}\dbinom{\beta}{\gamma_{2}}\dfrac{\partial
^{\alpha+\beta-\gamma_{1}-\gamma_{2}}\widetilde{W}_{\mu,i}\left(
\boldsymbol{x}\right)  }{\partial x^{\alpha-\gamma_{1}}\partial y^{\beta
-\gamma_{2}}}\dfrac{\partial^{\gamma_{1}+\gamma_{2}}R_{m}^{\Delta_{2}\left(
i\right)  }\left[  f\right]  \left(  \boldsymbol{x}\right)  }{\partial
x^{\gamma_{1}}\partial y^{\gamma_{2}}}.
\end{array}
\]
therefore
\[%
\begin{array}
[c]{l}%
\left\vert \dfrac{\partial^{\alpha+\beta}R_{B_{m}}\left[  f\right]  \left(
\boldsymbol{x}\right)  }{\partial x^{\alpha}\partial y^{\beta}}\right\vert
\leq\\
\qquad%
{\displaystyle\sum\limits_{i\in\mathcal{I}_{\boldsymbol{x}}}}
{\displaystyle\sum\limits_{\substack{0\leq\gamma_{1}\leq\alpha\\0\leq
\gamma_{2}\leq\beta}}}
\dbinom{\alpha}{\gamma_{1}}\dbinom{\beta}{\gamma_{2}}\left\vert \dfrac
{\partial^{\alpha+\beta-\gamma_{1}-\gamma_{2}}\widetilde{W}_{\mu,i}\left(
\boldsymbol{x}\right)  }{\partial x^{\alpha-\gamma_{1}}\partial y^{\beta
-\gamma_{2}}}\right\vert \left\vert \dfrac{\partial^{\gamma_{1}+\gamma_{2}%
}R_{m}^{\Delta_{2}\left(  i\right)  }\left[  f\right]  \left(  \boldsymbol{x}%
\right)  }{\partial x^{\gamma_{1}}\partial y^{\gamma_{2}}}\right\vert \\
\quad\leq%
{\displaystyle\sum\limits_{i\in\mathcal{I}_{\boldsymbol{x}}}}
{\displaystyle\sum\limits_{\substack{0\leq\gamma_{1}\leq\alpha\\0\leq
\gamma_{2}\leq\beta}}}
\dbinom{\alpha}{\gamma_{1}}\dbinom{\beta}{\gamma_{2}}\dfrac{K_{\alpha
-\gamma_{1},\beta-\gamma_{2}}}{d_{i}^{\alpha+\beta-\gamma_{1}-\gamma_{2}}%
}\left\vert f\right\vert _{m,1}\left(  \dfrac{2^{m-\left(  \gamma_{1}%
+\gamma_{2}\right)  }}{\left(  m-\left(  \gamma_{1}+\gamma_{2}\right)
-1\right)  !}\left\Vert \boldsymbol{x}-V_{i}\right\Vert _{2}^{m+1-\left(
\gamma_{1}+\gamma_{2}\right)  }\right.  \\
\left.  \qquad+C_{\gamma_{1},\gamma_{2}}\left(  m\right)  \sum\limits_{l=\max
\left\{  1,\gamma_{1}+\gamma_{2}\right\}  }^{m}r_{i}^{m+1-l}\left(  r_{i}%
^{2}S_{i}\right)  ^{l}\left\Vert \boldsymbol{x}-V_{i}\right\Vert
_{2}^{l-\left(  \gamma_{1}+\gamma_{2}\right)  }\right)  \\
\quad\leq\left\vert f\right\vert _{m,1}M%
{\displaystyle\sum\limits_{\substack{0\leq\gamma_{1}\leq\alpha\\0\leq
\gamma_{2}\leq\beta}}}
\dbinom{\alpha}{\gamma_{1}}\dbinom{\beta}{\gamma_{2}}\max\limits_{i\in
\mathcal{I}_{\boldsymbol{x}}}\left\{  \dfrac{K_{\alpha-\gamma_{1},\beta
-\gamma_{2}}}{d_{i}^{\alpha+\beta-\gamma_{1}-\gamma_{2}}}\left(
\dfrac{2^{m-\left(  \gamma_{1}+\gamma_{2}\right)  }}{\left(  m-\left(
\gamma_{1}+\gamma_{2}\right)  -1\right)  !}R_{w_{i}}^{m+1-\left(  \gamma
_{1}+\gamma_{2}\right)  }\right.  \right.  \\
\left.  \left.  \qquad+C_{\gamma_{1},\gamma_{2}}\left(  m\right)
\sum\limits_{l=\max\left\{  1,\gamma_{1}+\gamma_{2}\right\}  }^{m}%
r_{i}^{m+1-l}\left(  r_{i}^{2}S_{i}\right)  ^{l}R_{w_{i}}^{l-\left(
\gamma_{1}+\gamma_{2}\right)  }\right)  \right\}  .
\end{array}
\]

\end{proof}

In the following section we present numerical results that testify\ to the
accuracy of the proposed operator.

\section{Numerical tests.}

\begin{figure}[ptb]
\includegraphics[width=.33\linewidth]{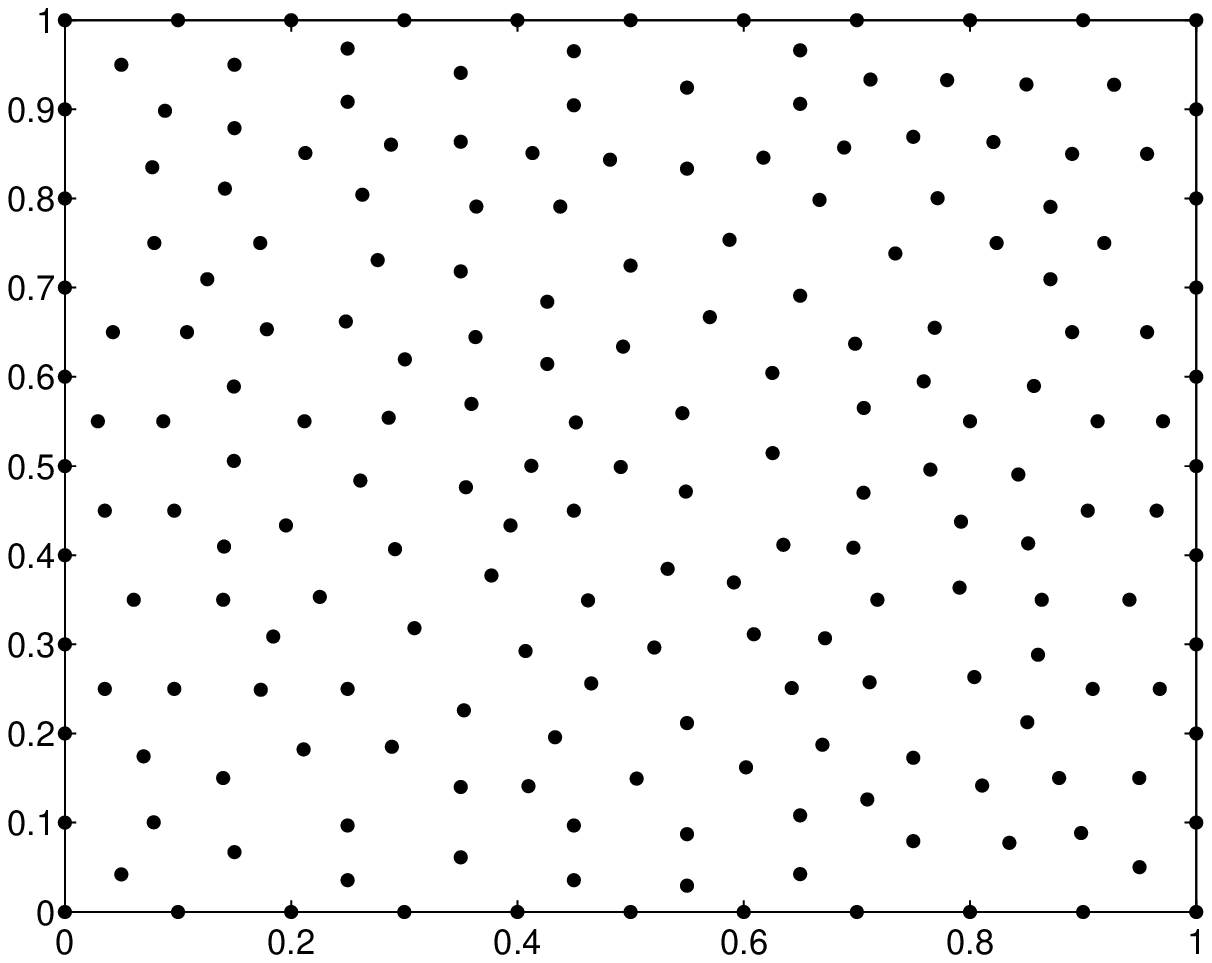}\hspace{-0.1cm}
\includegraphics[width=.33\linewidth]{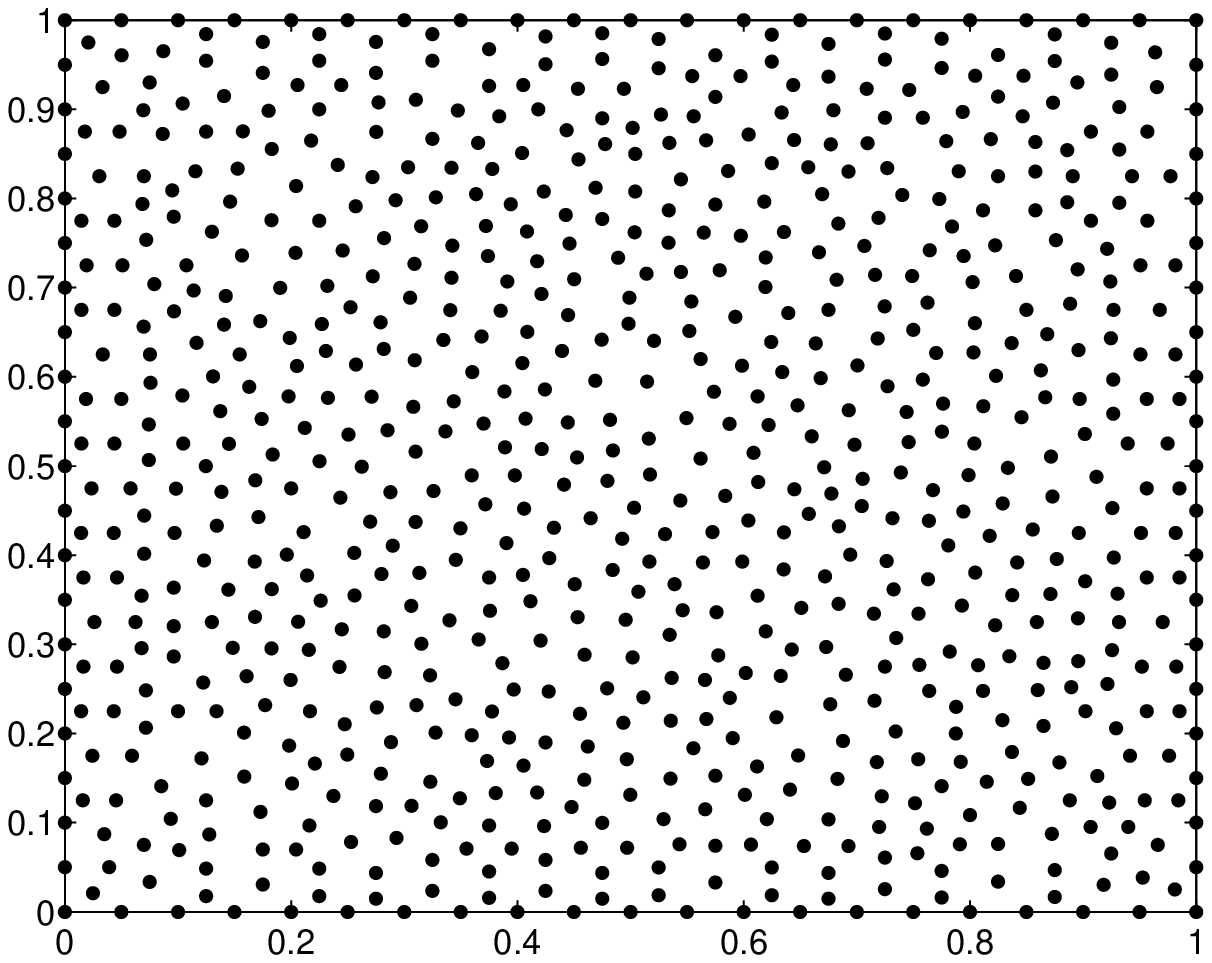}\hspace{0cm}
\includegraphics[width=.33\linewidth]{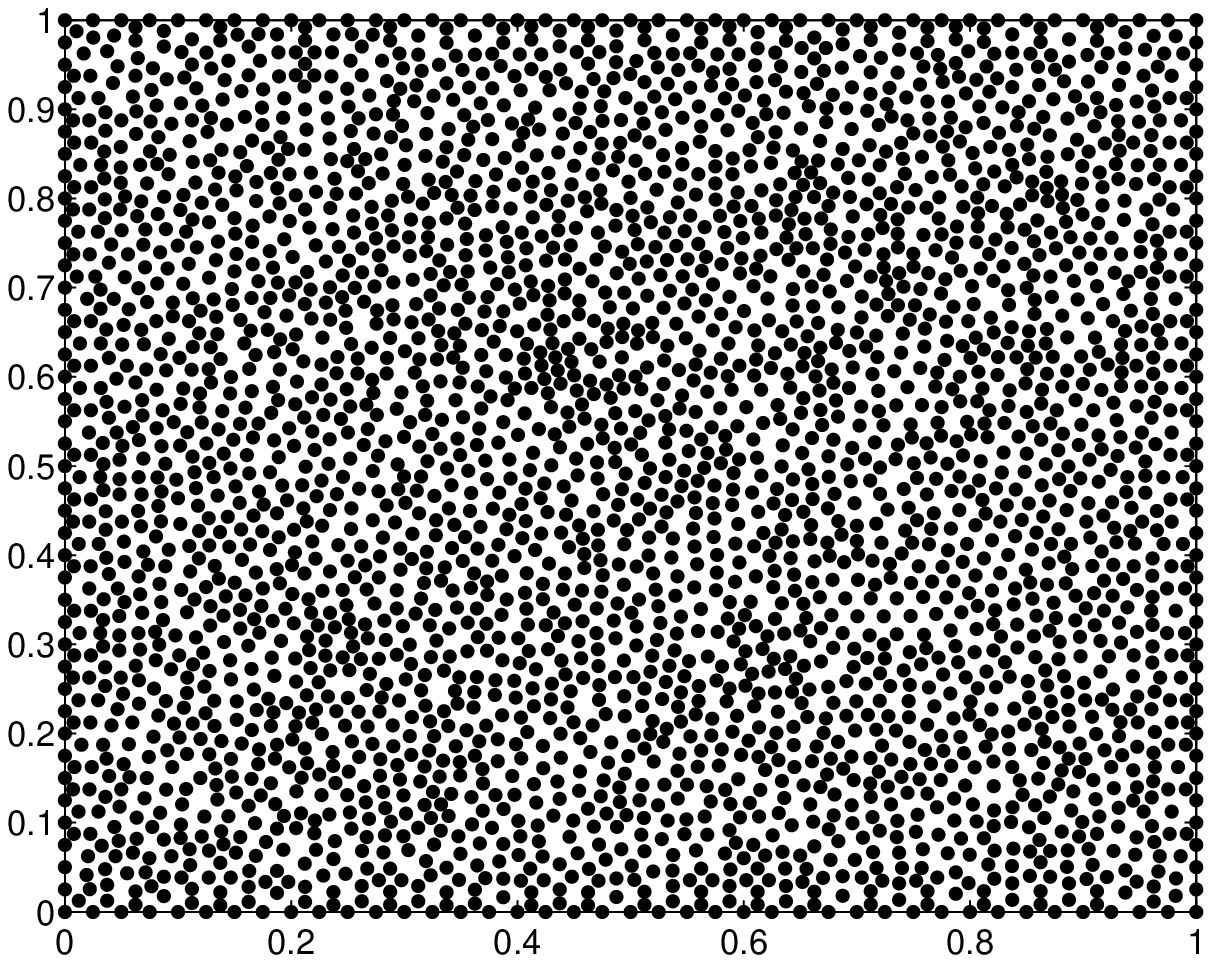}\caption{Three of the
five sets of nodes used in our numerical experiments with $N=202$ (left),
$N=777$ (middle), and $N=2991$ (right) nodes.}%
\label{fig:set-of-nodes}%
\end{figure}

\label{Sec:5}To test the accuracy of approximation of the bivariate
Shepard-Bernoulli operator we apply it to different sets of nodes in the rectangle $R=\left[  0,1\right]
\times\left[  0,1\right]  $ (see Figure
\ref{fig:set-of-nodes}) and $10$ test functions (see Figure
\ref{fig:test-functions}) generally used in the multivariate interpolation
of large sets of scattered data \cite{RenCli,RenBro}. In the following we
report the results of some of these experiments.

\begin{figure}[ptb]
{\small \centering \parbox{.15\linewidth}{\centering
\includegraphics[width=\linewidth]{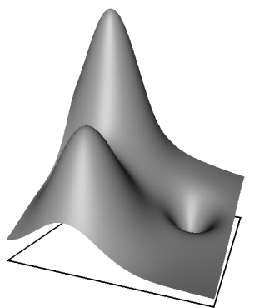}\par
$f_1$\\[1ex]
\includegraphics[width=\linewidth]{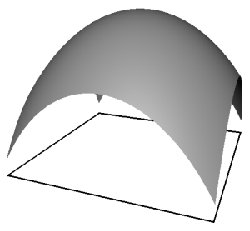}\par
$f_6$}\hfill\parbox{.15\linewidth}{\centering
\includegraphics[width=\linewidth]{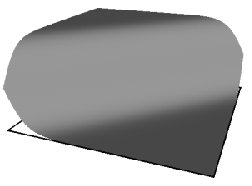}\par
$f_2$\\[1ex]
\includegraphics[width=\linewidth]{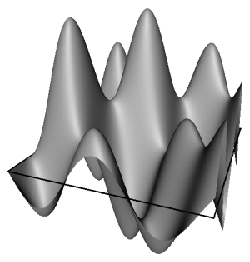}\par
$f_7$}\hfill\parbox{.15\linewidth}{\centering
\includegraphics[width=\linewidth]{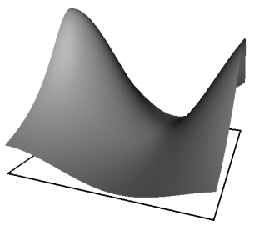}\par
$f_3$\\[1ex]
\includegraphics[width=\linewidth]{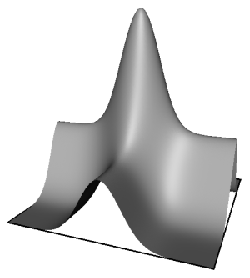}\par
$f_8$}\hfill\parbox{.15\linewidth}{\centering
\includegraphics[width=\linewidth]{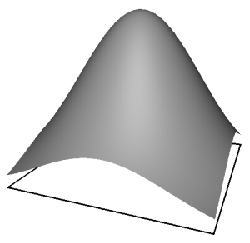}\par
$f_4$\\[1ex]
\includegraphics[width=\linewidth]{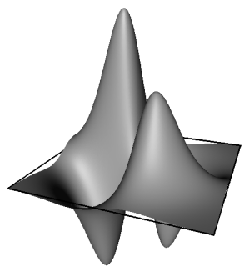}\par
$f_{9}$}\hfill\parbox{.15\linewidth}{\centering
\includegraphics[width=\linewidth]{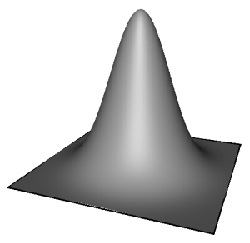}\par
$f_5$\\[1ex]
\includegraphics[width=\linewidth]{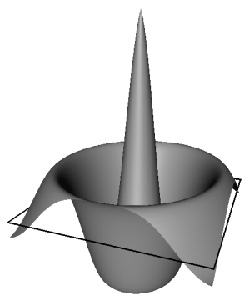}\par
$f_{10}$}}\caption{Test functions used in our numerical experiments. The
definitions of the test functions can be found in~\cite{Ren2}.}%
\label{fig:test-functions}%
\end{figure}

\subsection{Error of approximation when derivative data are given}

In a first series of experiments, we consider the case in which at each node
$V_{i}$ function evaluations and derivative data up to the order $2$ are
given. In this case, for each function $f_{i},i=1,\ldots,10$ we compare the
numerical results obtained by applying the approximation operator $S_{B_{3}%
}\left[  f\right]  $ with those obtained by the local version of the famous
Shepard-Taylor operator \cite{Far,Zup}%
\begin{equation}
S_{T_{2}}\left[  f\right]  \left(  x,y\right)  =\sum\limits_{i=1}%
^{N}\widetilde{W}_{\mu,i}\left(  x,y\right)  T_{2}\left[  f,V_{i}\right]
\left(  x,y\right)  \label{shepard-taylor}%
\end{equation}
which uses the same data. We report the results for the first four functions
in Figure \ref{fig:BSHEPvsTSHEP}, where we show the maximum interpolation
errors, computed for the parameter value $N_{w}=9$. The remaining six
functions have a similar behaviour and for this reason we omit them. Numerical
results show that the operator $S_{B_{3}}$ improves the accuracy of the
operator $S_{T_{2}}$.

\begin{figure}[ptb]
\centering
\includegraphics[width=.50\linewidth]{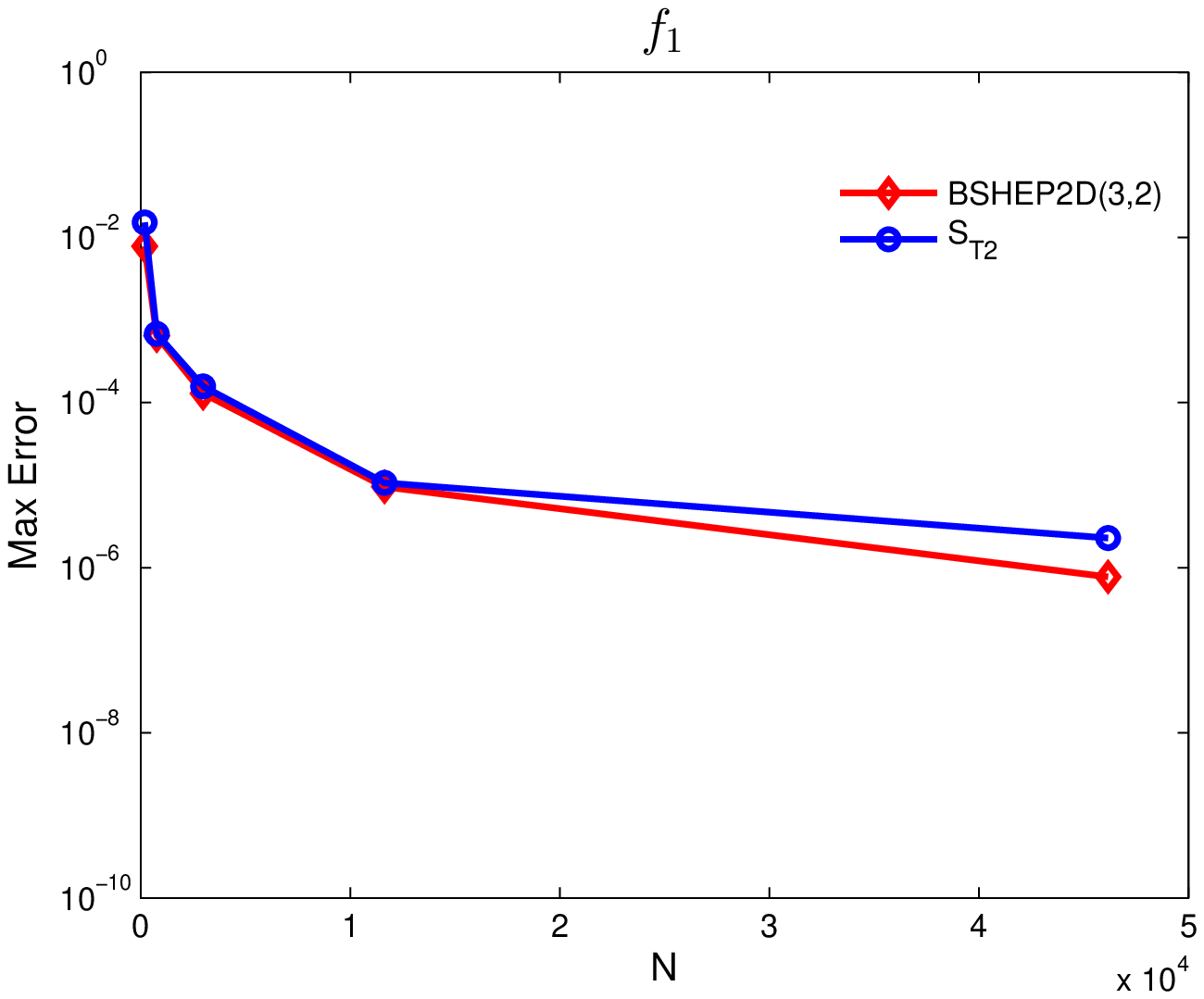}\hfill
\includegraphics[width=.50\linewidth]{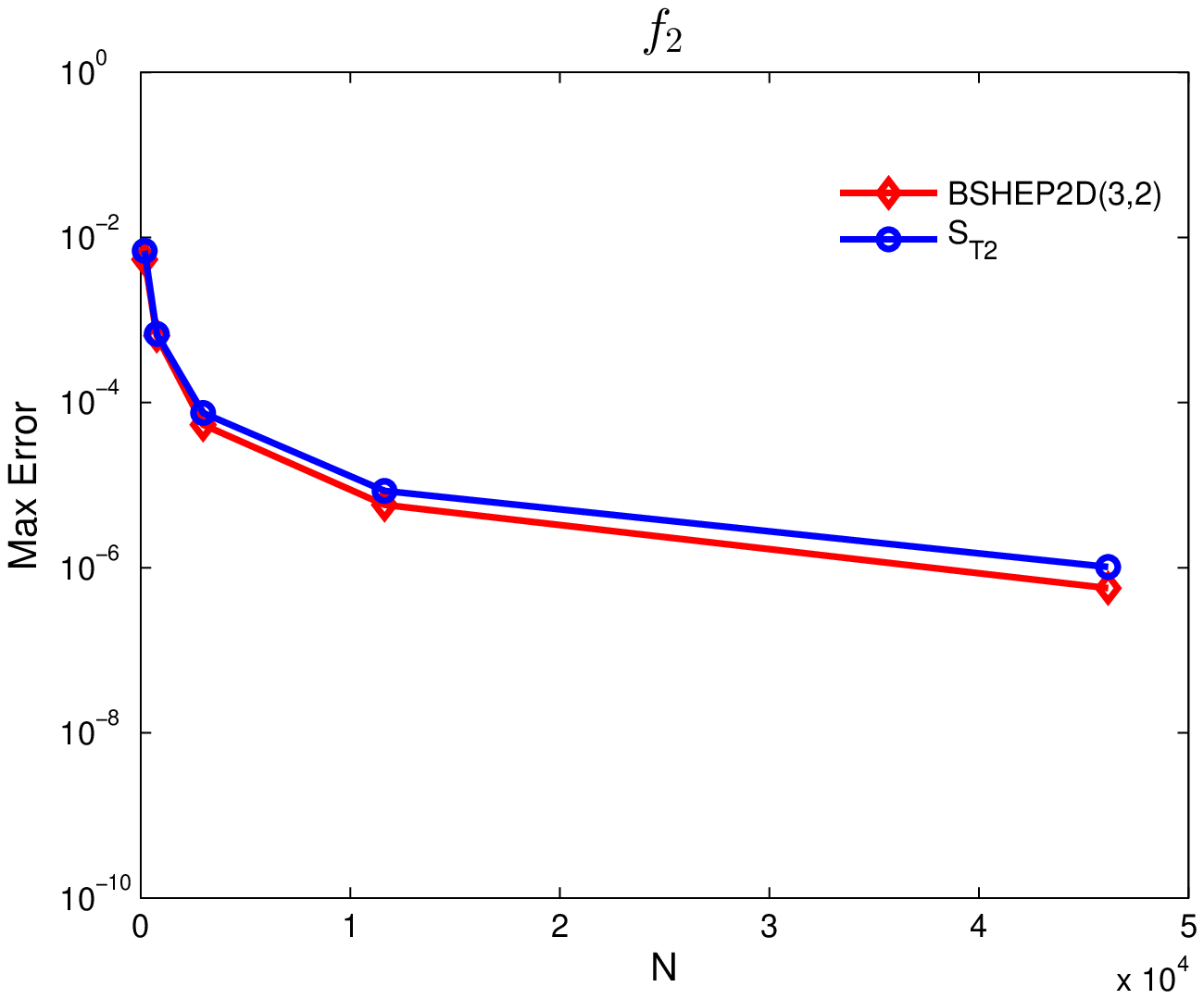}\hfill
\includegraphics[width=.50\linewidth]{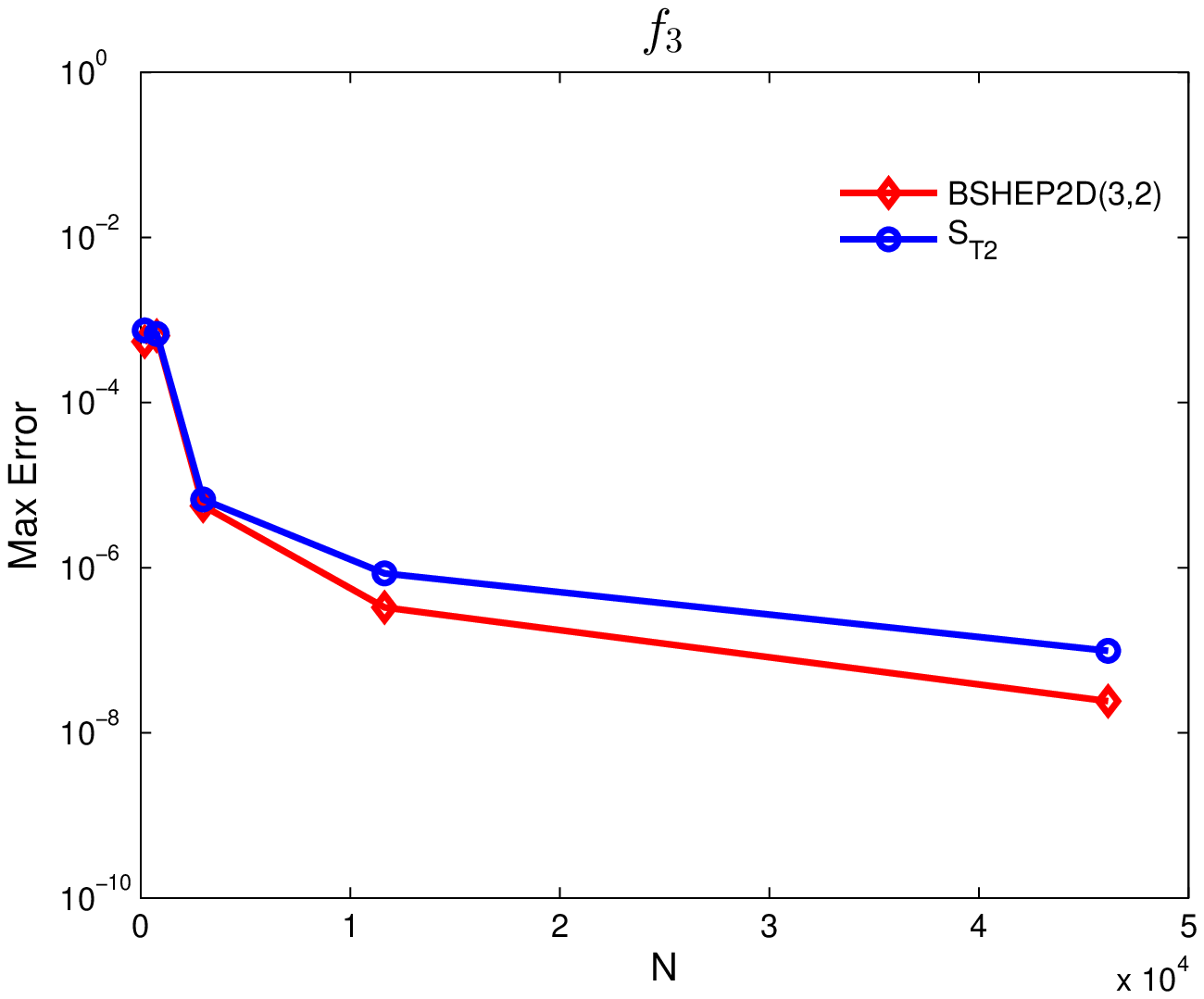}\hfill
\includegraphics[width=.50\linewidth]{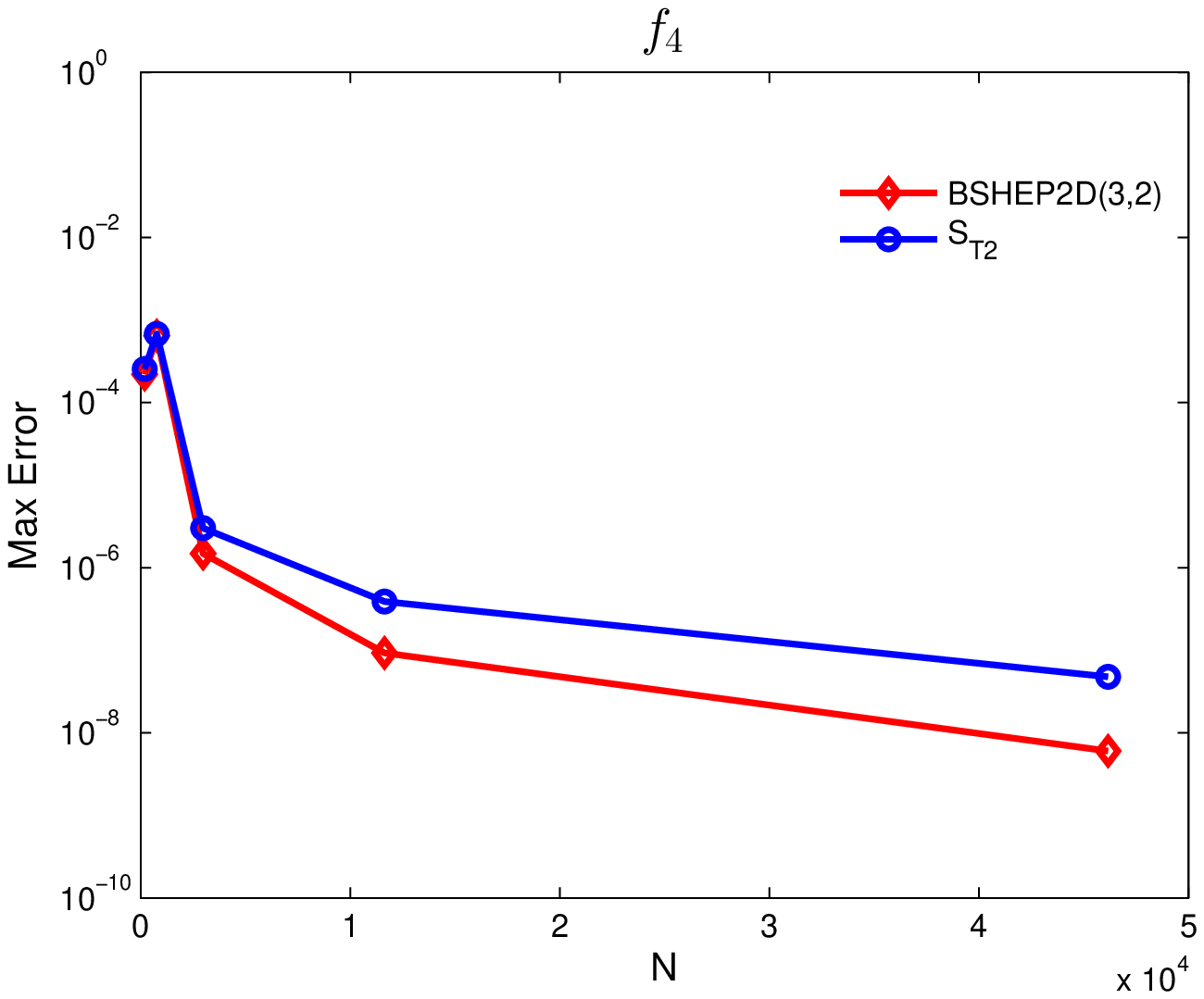}\hfill
\caption{Comparison between the maximum approximation error using $S_{T_{2}}$
operator and BSHEP2D(3,2) operators when function evaluations and derivative
data up to the order $2$ are given.}%
\label{fig:BSHEPvsTSHEP}%
\end{figure}

\subsection{Error of approximation when only function evaluations are given}

In a second series of experiments, we consider the case in which at each node
$V_{i}$ only function evaluations are given. In this case, the second order
derivatives
\begin{equation}
\frac{\partial f}{\partial x},\frac{\partial f}{\partial y},\frac{1}{2}%
\frac{\partial^{2}f}{\partial x^{2}},\frac{\partial^{2}f}{\partial x\partial
y},\frac{1}{2}\frac{\partial^{2}f}{\partial y^{2}} \label{derivativedata}%
\end{equation}
are usually replaced by the coefficients $a_{10},a_{01},a_{20},a_{11},a_{02}$
of the quadratic polynomial
\[
Q_{i}\left(  x,y\right)  =f\left(  V_{i}\right)  +\sum_{\substack{r+s=1\\r\geq
0,s\geq0}}^{2}a_{rs}\left(  x-x_{i}\right)  ^{r}\left(  y-y_{i}\right)  ^{s}%
\]
which fits data values $\left(  V_{k},f\left(  V_{k}\right)  \right)
,k=1,\ldots,N$ on a set of nearby nodes in a weighted least-square sense, as
in the definition of operator QSHEP2D \cite{Ren2}. The procedure for computing
these coefficients is well detailed in\ \cite{RenCli} and it is based on the choice
of another radius of influence about node $V_{i}$, $R_{q}$, which varies with
$i$ and is taken to be just large enough to include $N_{q}$ nodes in $B\left(
\boldsymbol{x}_{i},R_{q_{i}}\right)  $. At the same time the derivative data
(\ref{derivativedata}) at $V_{i},i=1,\ldots,N$ can be replaced by the
coefficients $b_{10},b_{01},b_{20},b_{11},b_{02}$ of the cubic polynomial
\[
C_{i}\left(  x,y\right)  =f\left(  V_{i}\right)  +\sum_{\substack{r+s=1\\r\geq
0,s\geq0}}^{3}b_{rs}\left(  x-x_{i}\right)  ^{r}\left(  y-y_{i}\right)  ^{s}%
\]
which fits the data values $\left(  V_{k},f\left(  V_{k}\right)  \right)
,k=1,\ldots,N$ on a set of nearby nodes in a weighted least-square sense, as
in the definition of operator CSHEP2D in \cite{Ren4}. In the following we
denote by BSHEP2D(3,2) the Shepard-Bernoulli operator obtained by substituting
the partial derivatives in $S_{B_{3}}\left[  f\right]  $ with linear
combinations of $a_{10},a_{01},a_{20},a_{11},a_{02}$ and by BSHEP2D(3,3) the
Shepard-Bernoulli operator obtained by substituting the partial derivatives in
$S_{B_{3}}\left[  f\right]  $ with linear combinations of $b_{10}%
,b_{01},b_{20},b_{11},b_{02}$. Therefore the operator BSHEP2D(3,2) has degree
of exactness $2$ as the operator QSHEP2D, while the operator BSHEP2D(3,3) has
degree of exactness $3$ as the operator CSHEP2D. We report the results for the
first four functions in Figure \ref{fig:BSHEPvsQSHEP}, where we show the
maximum interpolation errors, computed for the parameter value $N_{w}=9$ and
$N_{q}=13$ for the operator QSHEP2D and $N_{q}=17$ when we replace the derivative data by using the cubic polynomial $C_i(x,y)$.
The remaining six functions have a similar behaviour and for this reason we
omit them. Numerical results show that the operator $S_{B_{3}}$ improves the
accuracy of the operator QSHEP2D and is comparable with the operator CSHEP2D.

\begin{figure}[ptb]
\centering
\includegraphics[width=.50\linewidth]{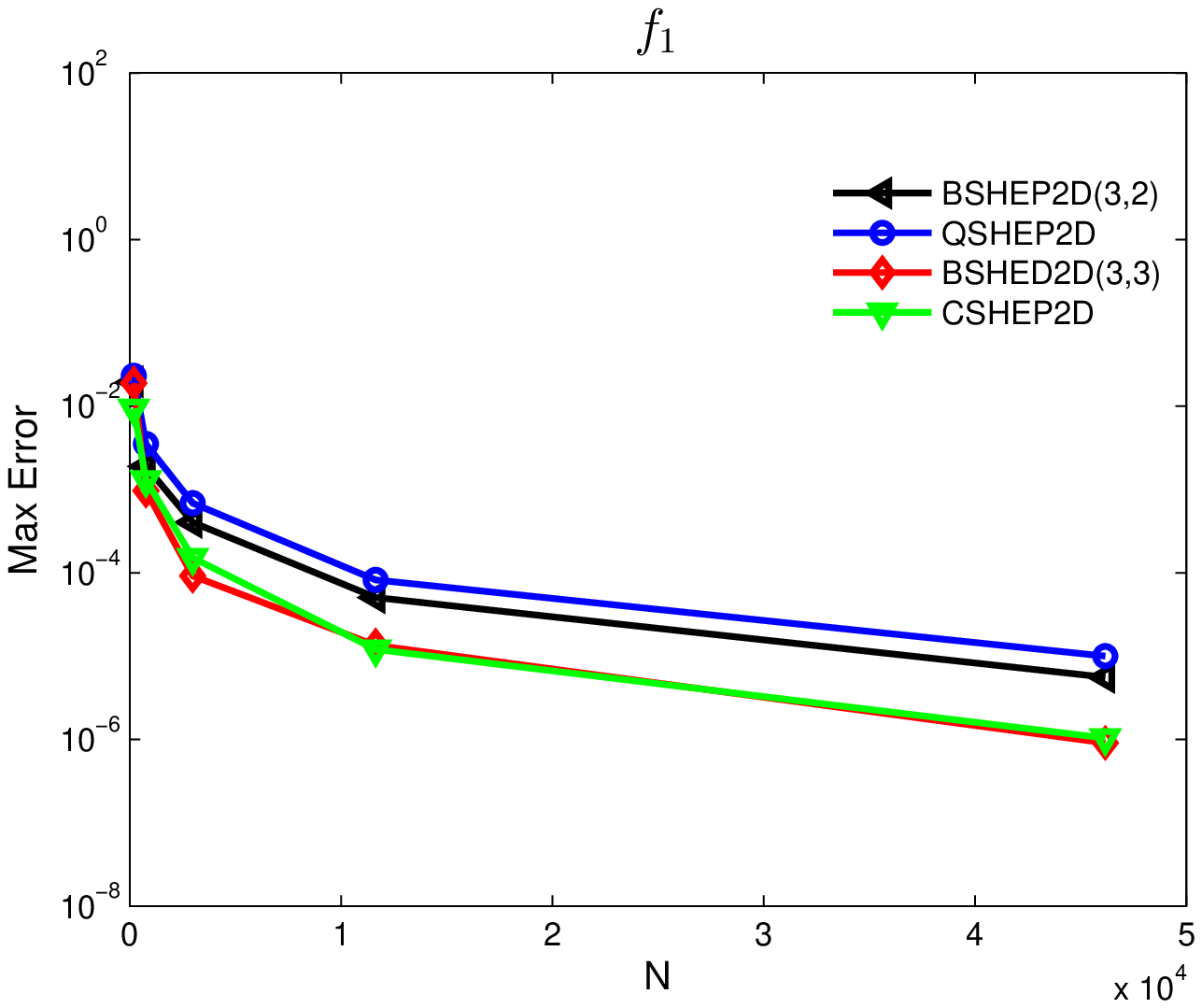}\hfill
\includegraphics[width=.50\linewidth]{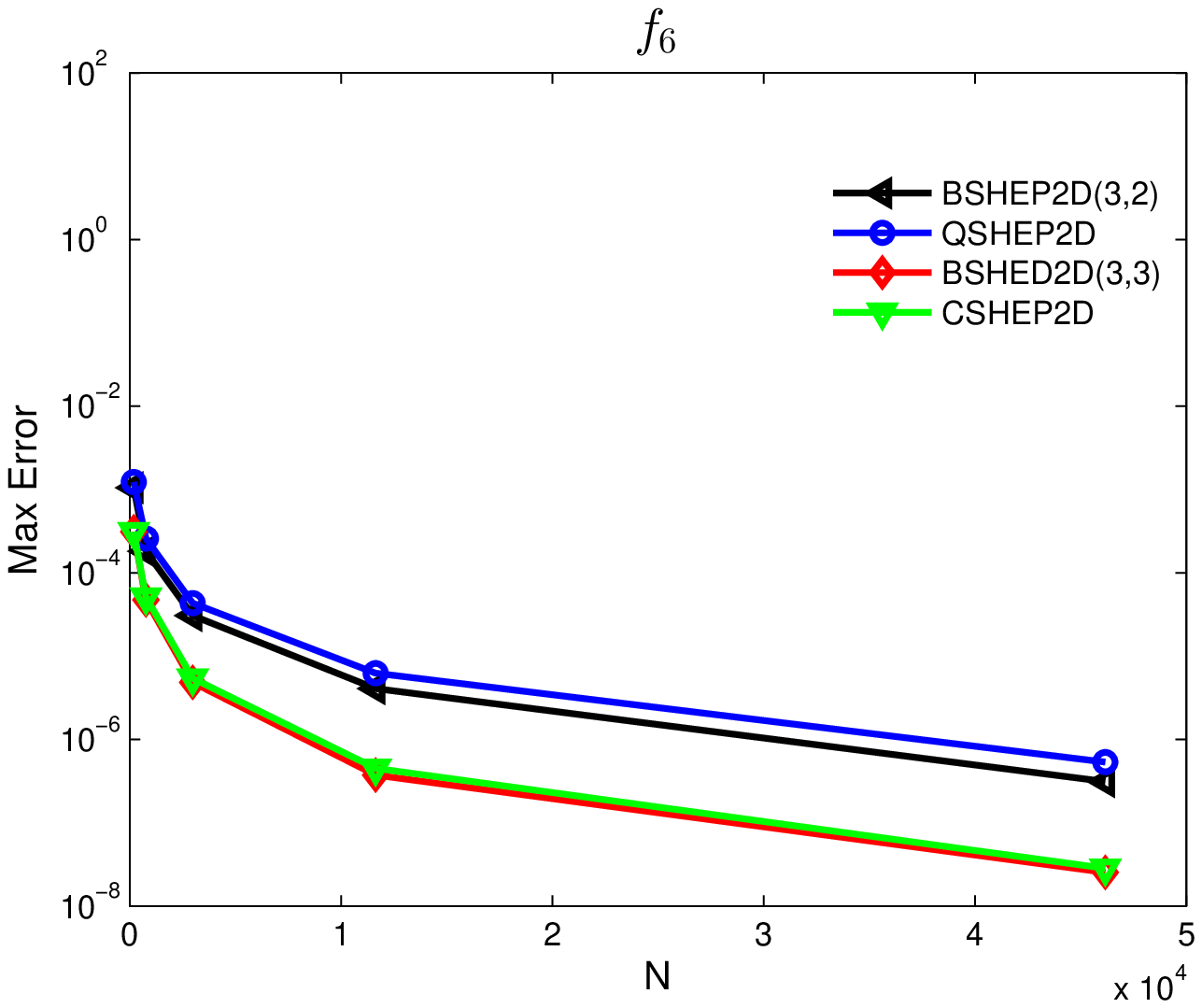}\hfill
\includegraphics[width=.50\linewidth]{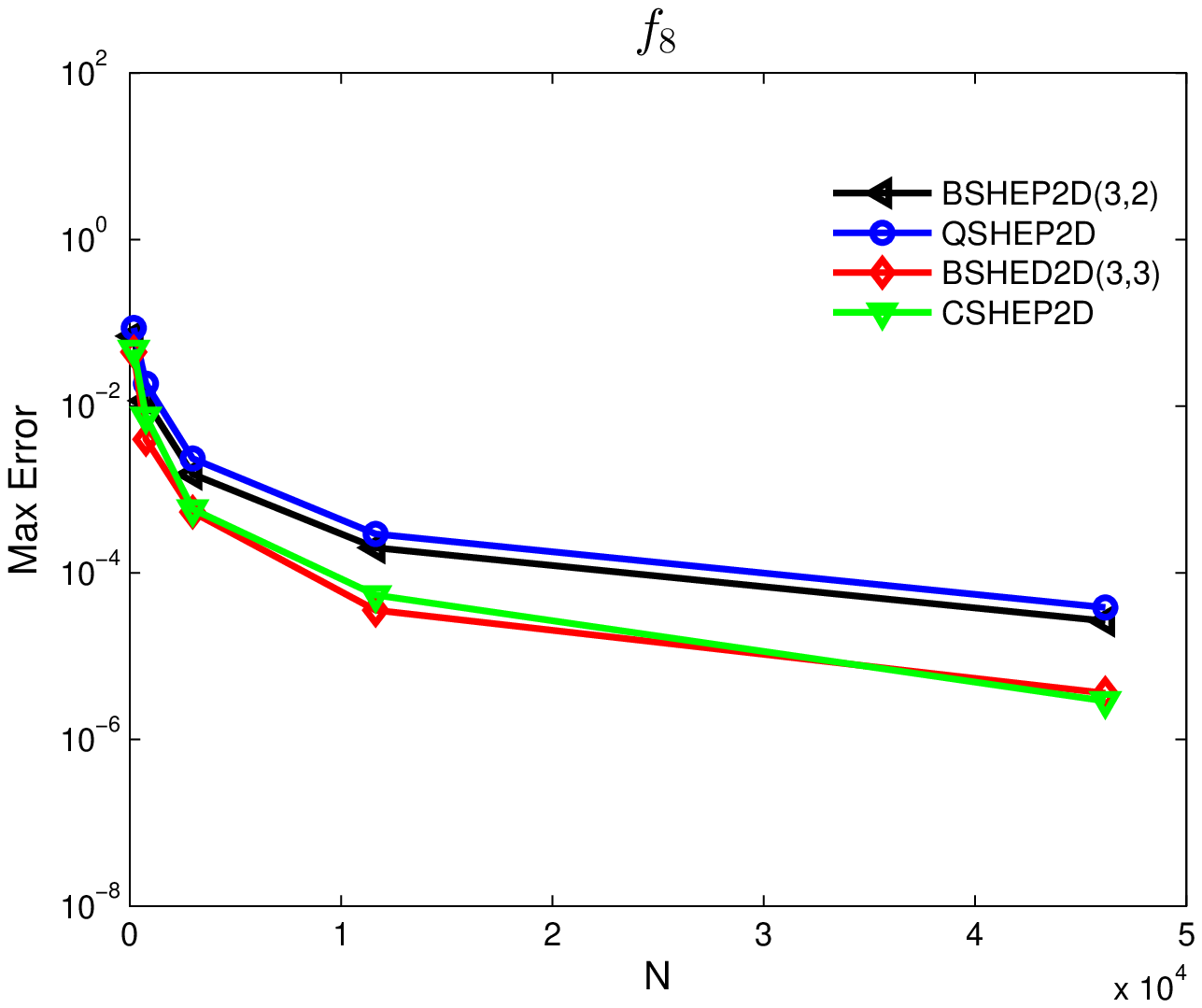}\hfill
\includegraphics[width=.50\linewidth]{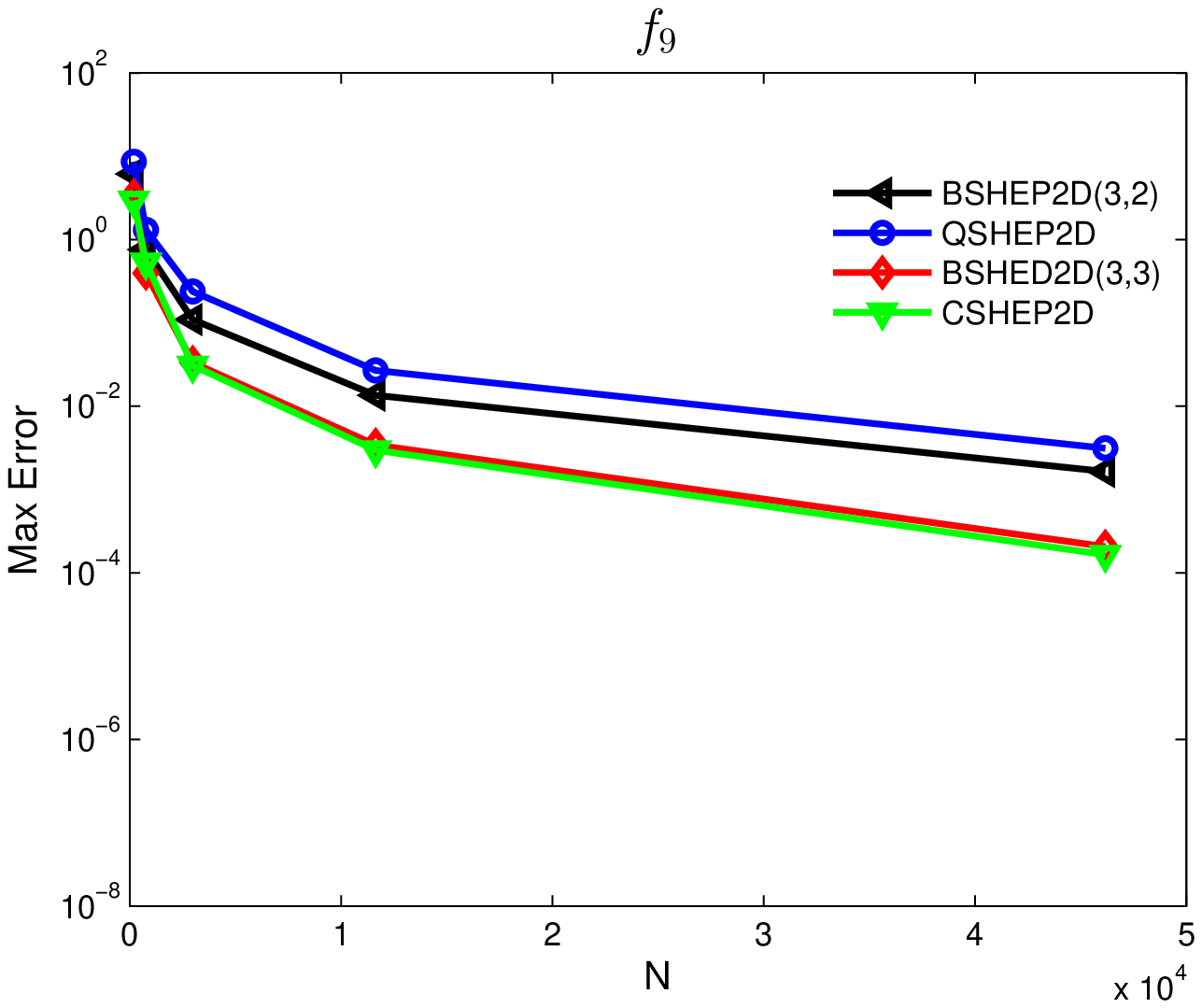}\hfill\caption{Comparison
between the maximum approximation error using QSHEP2D and CSHEP2D operators
and BSHEP2D(3,2) and BSHEP2D(3,3) operator when only function evaluations are
given.}\label{fig:BSHEPvsQSHEP}
\end{figure}

With regard to the computational cost we note that the
point-triangle associations which reduce the error of the three point
interpolation polynomials (\ref{gspoltayexp}) involves an additional cost of
$O\left(  N\right)  $ calculations that do not modify the computational cost
of operator QSHEP2D which is $O\left(  N\right)  $ for uniform
distributions of nodes and $O\left(  N^{2}\right)  $ in worst cases
\cite{RenBro}. On the other hand, local basis (\ref{Wtbasisfunctions}) used to
define our operators, containing a considerably lower number of nodes compared
to the choices recommended by Renka, involve a better localization of the
combined operator.

\section{Conclusions}

\label{Sec:6}In this paper we propose a new definition of the bivariate
Shepard-Bernoulli operators which avoids the drawbacks of Catinas extension
\cite{Cat}. These new interpolation operators are realized by using local
support basis functions introduced in \cite{FraNie} instead of classical
Shepard basis functions and the bivariate three point extension \cite{CosDel2}%
\ of the generalized Taylor polynomial introduced\ by F. Costabile in
\cite{Cos}.\ Their definition requires the association, to each sample point,
of a triangle with a vertex in it and other two vertices in its neighborhood. The proposed
point-triangle association is carried out to reduce the error of the three
point interpolation polynomial. As a consequence, the resulting operator not
only inherits interpolation conditions that each three point local interpolation polynomial satisfies at the referring vertex and increases by $1$ the degree of exactness of the
Shepard-Taylor operator \cite{Far} which uses the same data, but also improves
its accuracy. In this sense, the Shepard-Bernoulli operators belong to a recently introduced class of operators for enhancing the approximation order of Shepard operators by using supplementary derivative data \cite{CaiDelDit,CosDelDit1,CosDelDit2}. For the general problem of the enhancement of the algebraic precision of linear operators of approximation see the papers  \cite{Han,Guessab,Lam} and the references therein.
Moreover, when applied to the scattered data interpolation
problem, the Shepard-Bernoulli operator $S_{B_{3}}$ improves the
accuracy of the operator QSHEP2D and is comparable with the operator CSHEP2D by Renka \cite{Ren2,Ren4}. Finally, the
quadratic triangular finite element $P_{2}^{\left[  \Delta_{2}\left(
V_{1},V_{2},V_{3}\right)  ;V_{3}\right]  }[f]\left(  \boldsymbol{x}\right)  $
can be used to improve the accuracy of approximation of the triangular Shepard
method \cite{Little}.


\begin{thebibliography}{99}
\bibitem {Atk} Atkinson, K.E.: An Introduction to
Numerical Analysis. John Wiley \& Sons, New York (1978)

\bibitem {Apo}Apostol, T.: Calculus, Vol. 1. John Wiley \& Sons, Inc., New York (1967)

\bibitem {Bar}Barnhill,R.E.: Representation and approximation of surfaces. In: Mathematical software III, J.R. Rice, eds. Academic Press, New York, 68-119 (1977)

\bibitem {Bojanov} Bojanov, Borislav D., Hakopian, H., Sahakian,B.: Spline Functions and Multivariate Interpolations. Kluwer Academic Publishers, Dordrecht (1993)

\bibitem {BrenScot}Brenner, S.C., Scott, L.R.:The Mathematical Theory of Finite Elements Methods. Springer-Verlag, New York (1994)

\bibitem {CaiDel}Caira, R., Dell'Accio, F.: Shepard-Bernoulli operators. Mathematics of Computation, 76, 299-321 (2007)


\bibitem {CaiDelDit}Caira, R., Dell'Accio, F., Di Tommaso, F.: On the bivariate Shepard-Lidstone operators. Journal of Computational and Applied Mathematics, 236, 1691-1707 (2012)

\bibitem {Cat}Catinas, T.: The bivariate Shepard operator of Bernoulli type. Calcolo 44, 189-202 (2007)

\bibitem {ChuiLai}Chui, C.K., Lai, M.J.: Multivariate vertex splines and finite elements. Journal of Approximation
Theory 60, 245-343 (1990)

\bibitem {ComTri1}Coman, Gh., Tr\^{\i}mbi\c{t}a\c{s}, R.T.: Combined Shepard univariate operators. East Journal of Approximation 7, 471-483 (2001)

\bibitem {Cos}Costabile,F.A.: Expansions of real functions in Bernoulli polynomials and applications. Conferenze del Seminario di Matematica, Universit\`{a} di Bari 273, 1-13 (1999)

\bibitem {CosDel1}Costabile,F.A., Dell'Accio,F.: Expansion over a rectangle of real functions in Bernoulli polynomials
and applications. BIT 41, 451-464  (2001)

\bibitem {CosDel2}Costabile,F.A., Dell'Accio,F.: Expansions over a simplex of real functions by means of Bernoulli
polynomials. Numerical Algorithms 28, 63-86 (2001)

\bibitem {CosDel3}Costabile,F.A., Dell'Accio,F.: Lidstone Approximation on the triangle. Applied Numerical Mathematics, 339-361 (2005)

\bibitem {CosDelDit1}Costabile,F.A., Dell'Accio,F., Di Tommaso F.:
Enhancing the approximation order of local Shepard operators by Hermite
polynomials. Computers \& Mathematics with Applications, 64, 3641-3655  (2012)

\bibitem {CosDelDit2}Costabile,F.A., Dell'Accio,F., Di Tommaso F.:
Complementary Lidstone interpolation on scattered data sets. Numerical
Algorithms,64, 157-180  (2013)

\bibitem {CosDelGua}Costabile,F.A., Dell'Accio,F., Gualtieri, M.I.: A new approach to Bernoulli polynomials on
the Triangle. Rendiconti di Matematica e delle sue Applicazioni, 26, 1-12  (2006)

\bibitem {CosDelGuz}Costabile, F.A., Dell'Accio, F., Guzzardi, L.: New bivariate polynomial expansion with only boundary data on the simplex. Calcolo, 45, 177-192  (2008)

\bibitem {CosDelLuc}Costabile, F.A., Dell'Accio, F., Luceri, R.: Explicit polynomial expansions of regular real functions by means of even order Bernoulli polynomials and boundary values. Journal of Computational and Applied Mathematics, 176, 77-90  (2005)


\bibitem {Dav}Davis, P.J. Interpolation \& Approximation. Dover Publications, Inc., New York (1975)


\bibitem {Far}Farwig, R.: Rate of convergence of Shepard's global interpolation formula. Mathematics of Computation, 46, 577-590 (1986)

\bibitem {Fra}Franke, R.: Scattered data interpolation: tests of some methods. Mathematics of Computation 38, 181-200  (1982)

\bibitem {FraNie}Franke, R., Nielson, G.: Smooth interpolation of large sets of scattered data. International Journal for Numerical Methods in Engineering, 15, 1691-1704 (1980)

\bibitem {GorWix}Gordon, W.J., Wixom, J.A.: Shepard's Method of "Metric Interpolation" to Bivariate and
Multivariate Interpolation. Mathematics of Computation, 32, 253-264  (1978)

\bibitem {Guessab}Guessab, A., Nouisser, O., Schmeisser, G.: Multivariate approximation by a combination of
modified Taylor polynomials. Journal of Computational and Applied Mathematics, 196, 162-179  (2006)

\bibitem {Han}Xuli, H.: Multi-node higher order expasions of a functions. Journal of Approximation Theory 124, 242-253  (2003)
\bibitem {HosLas}Hoschek, J., Lasser, D.: Fundamentals of Computer Aided Geometric Design. A. K. Peters, USA (1989)


\bibitem {Jor}Jordan, R.: Calculus of Finite Differences. Chelsea Publishing Co., New York (1960)

\bibitem {Kraaijpoel}Kraaijpoel, D., van Leeuwen, L.: Raising the order of multivariate approximation
schemes using supplementary derivative data. Procedia Computer Science, 1, 307-316  (2010)

\bibitem{Lam}Lamnii, M., Mazroui, A., Tijini, A.: Raising the approximation order of multivariate quasi-interpolants. BIT Numerical Mathematics, DOI 10.1007/s10543-014-0470-8 (2014)

\bibitem{Little} Little, F.: Convex Combination Surfaces. In Surfaces in Computer Aided Geometric
Design, 99-108 (1982)

\bibitem {RenCli}Renka, R.J., Cline, A.K.: A triangle-based $C^{1}$ interpolation method. Rocky Mountain Journal of Mathematics, 14, 223-237 (1984)

\bibitem {Ren1}Renka, R.J.: Multivariate Interpolation of Large Sets of Scattered Data. ACM Transactions on Mathematical Software, 14, 139-148 (1988)

\bibitem {Ren2}Renka, R.J.: Algorithm 660, QSHEP2D: Quadratic Shepard Method for Bivariate Interpolation of Scattered Data. ACM Transactions on Mathematical Software, 14, 149-150 (1988)

\bibitem {Ren4}Renka, R.J.: Algorithm 790, CSHEP2D: Cubic Shepard Method for Bivariate Interpolation of Scattered Data. ACM Transactions on Mathematical Software, 25, 70-73 (1999)

\bibitem {RenBro}Renka, R.J., Brown, R.: Algorithm 792: Accuracy Tests of ACM Algorithms for Interpolation of Scattered Data in the Plane. ACM Transactions on Mathematical Software, 25, 78-94 (1999)


\bibitem {She}Shepard, D.: A two-dimensional interpolation function for irregularly-spaced data. In Proceedings of the 1968 23rd ACM National Conference, ACM Press, New York, 517-524 (1968)

\bibitem {Wen}Wendland, H.: Scattered Data Approximation. Cambridge University Press (2005)

\bibitem {Whit}Whitney, H.: Functions differentiable on the boundaries of regions. Annals of Mathematics 35, 482-485 (1934)

\bibitem {Zup}Zuppa, C.: Error estimates for modified local Shepard's interpolation formula. Applied Numerical Mathematics, 49, 245-259 (2004)
\end{thebibliography}
\end{document}